\newtheorem{Theorem}{Theorem}
\newtheorem{Corollary}[Theorem]{Corollary}
\newtheorem{Proposition}[Theorem]{Proposition}
\newtheorem{Definition}[Theorem]{Definition}
\DeclareMathOperator{\des}{des}
\DeclareMathOperator{\ides}{ides}
\DeclareMathOperator{\Des}{Des}
\DeclareMathOperator{\IDes}{IDes}
\DeclareMathOperator{\inv}{inv}
\DeclareMathOperator{\RLmin}{RLmin}
\DeclareMathOperator{\MIS}{MIS}
\DeclareMathOperator{\maj}{maj}
\DeclareMathOperator{\imaj}{imaj}
\def\RS{\mathop{\rm RS}\nolimits}
\def\URL{\mathop{\rm U}\nolimits^{RL}}
\def\AndI{\mathop{\rm And}\nolimits^{I}}  
\def\AndII{\mathop{\rm And}\nolimits^{I\!I}} 
\title{Inverse descent statistic for  Andr\'e and simsun  permutations}
\author{Guo-Niu Han}
\address{I.R.M.A., UMR 7501, Universit\'e de Strasbourg et CNRS, 7 rue
Ren\'e Descartes, F-67084 Strasbourg, France}
\email{guoniu.han@unistra.fr}
\author{Kathy Q. Ji}
\address{ Center for Applied Mathematics and KL-AAGDM,
Tianjin University,
Tianjin 300072, P.R. China
}
\email{kathyji@tju.edu.cn}
\author{Huan Xiong}
\address{
 Institute for Advanced Study in Mathematics, 
   Harbin Institute of Technology,
   Heilongjiang 150001, P.R. China
}
\email{huan.xiong.math@gmail.com}
\date{2025/11/16}
	\subjclass[2020]{05A15, 05A30, 05E15, 11F11}
	\keywords{Euler numbers, Andr\'e permutations, simsun  permutations, increasing binary trees, descents, inverse descents, shuffles}
\begin{document}
	\begin{abstract}
Simsun permutations, Andr\'e I permutations and Andr\'e II  permutations are three combinatorial models for Euler numbers.  It's known that
the descent statistic is equidistributed  
over the set of  Andr\'e I permutations and the set of simsun permutations. In this paper, we prove that
the trivariate statistic $(\ides, \des, \maj)$,  comprising the  inverse descent, descent,  and major index,
are equidistributed over these three sets. This result is  equivalent to showing that the inverse descent is equidistributed over these three sets that share the same tree shape. The proof of the equidistribution of  the inverse descent  over  the set of Andr\'e I permutations and the set of Andr\'e II  permutations with the same tree shape reduces to establishing new refinements of Stanley's shuffle theorem. 
	\end{abstract}
	\maketitle


\section{Introduction}

In the field of enumerative combinatorics, several kinds of permutations are counted by {\it Euler numbers}, such as {\it alternating permutations}, {\it Andr\'e I and II permutations}, and {\it simsun  permutations}. {\it Euler numbers}, denoted by $E_n$, are a sequence of integers that arise in the Taylor series expansions of $\sec(x)+\tan(x)$. Their combinatorial significance was cemented by the work of {Andr\'e} in the late 19th century \cite{andre1881permutations}. Andr\'e proved that $E_n$ counts the number of {\it alternating permutations} of length $n$ (see \cite{stanley2009survey}), which are permutations $\sigma = \sigma_1\sigma_2\ldots\sigma_n$ satisfying $\sigma_1>\sigma_2<\sigma_3>\cdots$. 

\medskip

Andr\'e permutations were first introduced by Foata and Sch\"utzenberger and further studied by Strehl \cite{Str74} and Foata and Strehl \cite{FSt74, FSt76}.   For clarity, we will work with permutations of length $n$ for which each permutation is a sequence of $n$ distinct integers not necessarily from 1 to $n$.  The empty word $e$ and any single-letter word are defined as both {\it Andr\'e I permutations} and {\it Andr\'e II permutations}.
For a permutation $\sigma=\sigma_1\sigma_2\cdots \sigma_n$ ($n\geq 2$) of length $n$, we decompose it as $\sigma=\tau\,\min(\sigma)\,\tau'$. Here $\sigma$ is the concatenation of a left factor~$\tau$, followed by the minimum letter  $\min(\sigma)$, and a right factor $\tau'$.  Then, $\sigma$ is called an {\it Andr\'e I permutation} (resp. {\it Andr\'e II permutation}) if both $\tau$ and $\tau'$ are   Andr\'e I permutations (resp. Andr\'e II permutations), and the maximum letter of the subword $\tau\tau'$ lies in $\tau'$ (resp. the minimum letter of $\tau\tau'$ lies in $\tau'$).

\medskip

The set of all Andr\'e I permutations on the set $[n]:=\{1,2,\ldots, n\}$ is denoted by $\AndI_{n}$ and the set of  Andr\'e II  permutations  on the set $[n]$ is denoted by  $\AndII_{n}$. This inductive definition immediately reveals a connection to the Euler numbers, as it can be shown that the number of Andr\'e I permutations and Andr\'e II permutations on the set $[n]$ are equal, i.e., $E_n=|\AndI_n| = |\AndII_n|$.

\smallskip
Andr\'e I permutations for $n\leq 5$ are listed below:

$n=1$:\quad 1;\qquad $n=2$:\quad 12;\qquad
$n=3$:\quad 123, 213;

$n=4$:\quad 1234, 1324, 2314, 2134, 3124;

$n=5$:\quad 12345, 12435, 13425, 23415, 13245, 14235, 34125,
24135,\hfil\break
\indent\hphantom{$n=5$:\quad}23145, 21345, 41235, 31245, 21435, 32415, 41325, 31425.

\smallskip
Andr\'e II permutations for $n\leq 5$ are listed below:

$n=1$:\quad 1;\qquad $n=2$:\quad 12;\qquad
$n=3$:\quad 123, 312;

$n=4$:\quad 1234, 1423, 3412, 4123, 3124;

$n=5$:\quad 12345, 12534, 14523, 34512, 15234, 14235, 34125,
45123,\hfil\break
\indent\hphantom{$n=5$:\quad}35124, 51234, 41235, 31245, 51423, 53412, 41523, 31524.

\medskip

 Simsun  permutations were introduced by 
 Rodica Simion and Sheila Sundaram in a series of studies of homology representations of the symmetric group \cite{sundaram1995homology,sundaram1996plethysm}. To better elaborate on our results,  we adopt the following definition of simsun  permutations.  A permutation $\sigma = \sigma_1\sigma_2\ldots\sigma_n$ on the set $[n]$ is called a simsun  permutation if $\sigma_n=n$ and it contains no double descents, and this property is preserved after removing the elements $n, n-1, n-2, \ldots, 1$ in order. 
For example, it is easy to see that $\sigma= 21473658$ is a simsun  permutation since $21473658$, $2147365$, $214365$, $21435$, $2143$, $213$, $21$, $1$ have no double descents. Recall that  an index $i$ (where $1 \le i < n$)  is called a {\it descent} of a permutation $\sigma=\sigma_1\ldots \sigma_n$ if $\sigma_i > \sigma_{i+1}$ and an  index $i$ (where $1 \le i \leq n-2$)  is called a {\it double descent} if $\sigma_i > \sigma_{i+1}>\sigma_{i+2}$.

 Notably, if one removes the last element from a simsun  permutation  as defined here, the resulting permutation aligns with the original definition of simsun  permutations due to Simion and Sundaram.

The set of all simsun  permutations on the set $[n]$ is denoted by $\RS_{n}$.  A remarkable property of simsun  permutations is that  $|\RS_n|=E_{n}$. The notation $\RS_{n}$ was first adopted by Chow 
and Shiu  \cite{chow2011counting}.

Simsun  permutations for $n\leq 5$ are listed below:

$n=1$:\quad 1;\qquad $n=2$:\quad 12; \qquad $n=3$:\quad 123, 213;

$n=4$:\quad 1234, 1324, 2134, 2314, 3124;

 $n=5$:\quad 12345, 12435, 13245, 13425, 14235, 21345, 21435, 23145,\hfil\break
\indent\hphantom{$n=5$:\quad}23415, 24135, 31245, 31425, 34125, 41235, 41325, 42315. 
 
\medskip

Andr\'e permutations and simsun  permutations provide  new combinatorial interpretations for the Euler numbers. 
They  play an important role in the study of $cd$-indices of simplicial Eulerian posets. For results along this line, please see \cite{bayer2019cd,bayer1991new,He96, HR98, karu2006cd,purtill1993andre,stanley1994flag}. 
Other properties about Andr\'e permutations and simsun  permutations have been extensively studied by Barnabei et al. \cite{barnabei2020permutations}, Chow and Shiu \cite{chow2011counting}, Deutsch-Elizalde  \cite{deutsch2012restricted},  Disanto \cite{Di14},  Foata and the first author  \cite{FH01} and so on.     In particular, by constructing a bijection between the set of  Andr\'e I permutations and the set of simsun  permutations,  Chow and Shiu \cite{chow2011counting} observed that the number of descents are equidistributed over  Andr\'e I permutations and  simsun  permutations. Specifically, let $\des(\sigma)$ denote the number of descents of $\sigma$, they showed that 
$$
	 \sum_{\sigma \in \AndI_n}   t^{\des(\sigma)} =\sum_{\sigma \in \RS_{n}}  t^{\des(\sigma)}.
$$

In this paper, we show that the number of inverse descents are also equidistributed over  Andr\'e permutations and  simsun  permutations.  The number of inverse descents of a permutation $\sigma$ is simply the number of descents of its inverse permutation $\sigma^{-1}$, namely, $
\mathrm{ides}(\sigma) = \mathrm{des}(\sigma^{-1}).$ In fact,  we show that the trivariate statistic ($\ides$, $\des$, $\maj$)  are equidistributed over  Andr\'e permutations and  simsun  permutations, where  {\it  the major index} $\maj(\sigma)$ of $\sigma$ is defined  to be the sum of its descents of $\sigma$.    For brevity,   we adopt the notation 
   $n$-Andr\'e permutations for Andr\'e permutations  on   $[n]$ and $n$-simsun  permutations for   simsun permutations on   $[n]$.
\medskip 

Our main result is as follows. 

\begin{Theorem}\label{th:main} 
The trivariate statistic {\rm (}$\ides$, $\des$, $\maj${\rm )}  are equidistributed over  the set of $n$-Andr\'e I permutations,  $n$-Andr\'e II permutations and $n$-simsun permutations, i.e.,
   \begin{align*}
\sum_{\sigma \in \AndI_n} s^{\ides(\sigma)}t^{\des(\sigma)}  q^{\maj(\sigma)}
	&=\sum_{\sigma \in \AndII_n} s^{\ides(\sigma)}t^{\des(\sigma)}  q^{\maj(\sigma)}\\[5pt]
    &=\sum_{\sigma \in \RS_{n}} s^{\ides(\sigma)}t^{\des(\sigma)} q^{\maj(\sigma)}.
\end{align*}
   
\end{Theorem}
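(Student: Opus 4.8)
My plan is to route everything through the classical bijection $\sigma\mapsto T(\sigma)$ between permutations and increasing binary trees, built by recording the decomposition $\sigma=\tau\,\min(\sigma)\,\tau'$ as a root $\min(\sigma)$ with left subtree $T(\tau)$ and right subtree $T(\tau')$. Under this bijection the recursive definitions of $\AndI_n$ and $\AndII_n$ become shape-plus-label conditions: in both models every internal node must carry a right child (so the two families occupy the same set of tree shapes), while Andr\'e~I forces the largest proper descendant of each node into its right subtree and Andr\'e~II forces the smallest proper descendant there. The first step is to observe that the pair $(\des,\maj)$ is a function of the unlabeled shape alone: reading $T(\sigma)$ in inorder returns $\sigma$, and two consecutive letters form a descent precisely when the earlier one has no right child, so $\Des(\sigma)$---and hence $\des(\sigma)$ and $\maj(\sigma)$---can be read off from the shape. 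This reduces Theorem~\ref{th:main} to the assertion flagged in the abstract: for every fixed admissible shape, the one-variable polynomial $\sum s^{\ides(\sigma)}$ is the same whether $\sigma$ ranges over the Andr\'e~I, the Andr\'e~II, or the simsun permutations of that shape.

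Next I would expose the shuffle structure hidden in $\ides$. Since $\ides(\sigma)=\des(\sigma^{-1})$ counts the indices $i$ for which $i+1$ precedes $i$ in $\sigma$, it depends only on how consecutive \emph{values} are split among the subtrees. Fix a shape whose root has left and right subtrees of sizes $a$ and $b$. An admissible labeling amounts to choosing the value set $A$ of size $a$ carried by the left subtree---its complement $B$, together with the root, occupying the right---and then labeling the two subtrees admissibly. Regarding the root as belonging to the right region, each consecutive pair $(i,i+1)$ that straddles the two regions in the order ``right value then left value'' contributes exactly one boundary inverse descent, while the inverse descents internal to the subtrees are inherited. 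Hence $\sum s^{\ides(\sigma)}$ over all admissible labelings of the shape is a shuffle of the two subtree polynomials, weighted by the boundary inverse descents produced as $A$ is interleaved with its complement inside $\{1,\dots,n\}$. The Andr\'e~I and Andr\'e~II hypotheses affect this recursion only through which subsets $A$ are permitted: Andr\'e~I forbids $A$ from containing the largest available value at each node, Andr\'e~II forbids it from containing the smallest.

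The crux is a refinement of Stanley's shuffle theorem. I would isolate the generating function that records, for a shuffle of two fixed words on disjoint ordered alphabets, the finer boundary-straddle statistics that drive the recursion, and then prove that forcing the global maximum into the right word (the Andr\'e~I constraint) and forcing the global minimum into the right word (the Andr\'e~II constraint) yield identical $s$-polynomials once combined with the subtree factors. Applying this identity node by node up the tree shows that $\sum_{\sigma}s^{\ides(\sigma)}$ taken over Andr\'e~I permutations of a shape $T$ equals the same sum over Andr\'e~II permutations of $T$, establishing the Andr\'e~I/Andr\'e~II half of Theorem~\ref{th:main}. I expect this refinement to be the principal obstacle: the usual $q$-binomial form of Stanley's theorem tracks $\maj$ and is symmetric in the two alphabets, whereas here one must keep the inverse-descent straddle statistics and reconcile the asymmetric max-right versus min-right boundary conditions.

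Finally, for the simsun family I would supply a shape-preserving correspondence. Extending the descent-preserving bijection of Chow and Shiu \cite{chow2011counting} between $\AndI_n$ and $\RS_n$---or building the increasing binary tree of a simsun permutation directly from its recursive min-removal structure---I would verify that it respects the tree shape and transports $\ides$ compatibly, so that $\sum_{\sigma}s^{\ides(\sigma)}$ over simsun permutations of each shape matches the Andr\'e polynomials; alternatively the very same boundary-straddle recursion can be run on simsun permutations and compared against the Andr\'e~I recursion. Combining this with the shuffle-theorem identity and the shape-invariance of $(\des,\maj)$ then yields the full three-way equidistribution of $(\ides,\des,\maj)$ asserted in Theorem~\ref{th:main}.
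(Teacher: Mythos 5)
Your overall architecture coincides with the paper's: reduce via the increasing-binary-tree bijection and the shape-determinacy of $\Des$ to the fixed-shape statement (Theorem \ref{th:equi-shape}), handle Andr\'e I versus Andr\'e II by a constrained-shuffle identity at the root, and handle simsun by a shape-preserving bijection. On the I-versus-II leg your plan is viable but has one unacknowledged requirement: when you apply the identity ``node by node,'' the root-level shuffle polynomial for Andr\'e I is built from Andr\'e I labelings of the subtrees while the Andr\'e II one is built from \emph{different} words, so it is not enough that the max-right and min-right constraints give equal polynomials for one fixed pair of subtree labelings --- you must show the constrained polynomial depends on the subtree labelings only through their lengths and their ides values. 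That is exactly the content of the paper's Theorem \ref{thm:ideseq:3} (explicit binomial formulas), whose Corollary \ref{cor:ideseq:2} is what the induction actually consumes. The paper proves it by passing to inverses (Proposition \ref{idesdes}), whereupon max-in-right becomes the shuffles $\pi\shuffle_{ls}\delta$ (last letter from $\delta$) and min-in-right becomes $\pi\shuffle_{ll}\delta$ (second letter $\delta_2$), and then establishing $q$-refinements of Stanley's shuffle theorem (Theorem \ref{thm:shuffspe}) via the Ji--Zhang insertion bijection; your ``boundary straddle'' bookkeeping is precisely this inverse picture, so the step is true, but in your proposal it is only named, and the dependence-only-on-$(j,k,j',k')$ claim is the part that must actually be proved.

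The genuine gap is the simsun leg. Your fallback --- running the same root recursion on simsun permutations --- fails: the simsun condition is not a value-placement constraint at the root (it involves iterated removal of $n, n-1, \ldots$), so $\RS(T)$ does not factor over $\RS(T^l)\times\RS(T^r)$ the way $\AndI(T)$ and $\AndII(T)$ do. Your primary route, extending the Chow--Shiu bijection, is unsubstantiated: that bijection is not known to preserve tree shape, and shape-plus-ides compatibility is the whole point. The paper instead constructs a new bijection $\Omega$ (Theorem \ref{thm:bij}) between $\RS(T)$ and $\AndII(T)$ --- note, with Andr\'e II rather than Andr\'e I --- by relabeling the vertices lying on no left subtree via a cyclic shift and incrementing all other labels; under $\Omega$ the idescent set shifts, $i\in\IDes(\hat T)\Leftrightarrow i+1\in\IDes(\widetilde T)$, so $\ides$ is preserved in cardinality though not as a set (indeed $\imaj$ increases by exactly $\ides$). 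Without this or an equivalent construction, the third equality in Theorem \ref{th:main} remains unsupported in your proposal.
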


\medskip

To our knowledge, even the special case of the above result  for the univariate statistic ``$\ides$" is new:
$$
	A_n(s):=	\sum_{\sigma \in \AndI_n} s^{\ides(\sigma)}  
	=\sum_{\sigma \in \AndII_n} s^{\ides(\sigma)} 
    =\sum_{\sigma \in \RS_{n}} s^{\ides(\sigma)}.
$$
We list the first values of the polynomials $A_n(s)$ below:
\begin{align*}
A_1(s)&=1, \quad A_2(s)=1,\quad A_3(s)=s + 1,\quad
A_4(s)=4s + 1,\\
A_5(s)&=4s^2 + 11s + 1,\quad
A_6(s)=2s^3 + 32s^2 + 26s + 1.
\end{align*}

\medskip 

The proof of Theorem \ref{th:main} can be sketched as follows: By   sending Andr\'e permutations and simsun  permutations to increasing binary trees and applying Proposition \ref{th:equi-shape-des} in Section 2,  the proof of Theorem \ref{th:main}  reduces to showing that the  inverse descents (ides) is equidistributed  over the $n$-Andr\'e I permutations, the $n$-Andr\'e II permutations
and the $n$-simsun  permutations that share the same tree shape (see Theorem \ref{th:equi-shape} in Section 2). The proof of Theorem \ref{th:equi-shape} is split  into two parts: (a) proving the equidistribution of $\ides$  over 
 $n$-Andr\'e I  permutations and  $n$-Andr\'e II  permutations with the same tree shape (relation  (a) in Theorem \ref{th:equi-shape}),  and (b)   proving the equidistribution of $\ides$  over 
the  $n$-Andr\'e II  permutations and the simsun  permutations with the same tree shape  (relation  (b) in Theorem \ref{th:equi-shape}). Specially, the proof of relation  (a)   relies on an investigation of    the shuffle of permutations  (see Section 3 for details), while the proof of  relation (b) proceeds by   constructing a bijection between the set of  $n$-Andr\'e II permutations and the set of the $n$-simsun  permutations (see Section 4). It would be interesting to give a direct explicit bijective proof of relation  (a). 

\medskip 

In the proof of relation  (a) in Theorem \ref{th:equi-shape}, the following general result plays an important role. Note that this result applies to ordinary permutations, not solely Andr\'e permutations. 

Let $\mathfrak{S}_n$ denote the set of permutations on the set $[n]$. Suppose that $\sigma \in \mathfrak{S}_j$ and $\tau \in \mathfrak{S}_k$. We define the following three sets of permutations:
\begin{align*}
\sigma \lozenge\tau &=\{ \mu = \sigma'1\tau' \in \mathfrak{S}_{j+k+1} \mid
\sigma' \sim \sigma,\ \tau' \sim \tau \};\\
\sigma \vartriangle \tau 
&=\{ \mu = \sigma'1\tau' \in \mathfrak{S}_{j+k+1} \mid
\sigma' \sim \sigma,\ \tau' \sim \tau,\ j+k+1\in \tau' \};\\
\sigma \triangledown \tau
&=\{ \mu = \sigma'1\tau' \in \mathfrak{S}_{j+k+1} \mid
\sigma' \sim \sigma,\ \tau' \sim \tau,\ 2\in \tau' \},
\end{align*}
where $\sigma' \sim \sigma$ means that reducing the letters of  $\sigma'$ to $\{1,2,\ldots,j\}$ yields $\sigma$.

For example, if $\mu=\sigma'1\tau' =692581473$, then 
$\sigma'=69258$, which reduces to $\sigma=35124$, $\tau'=473$ which reduces to  $\tau=231$.

\begin{Theorem}\label{thm:ideseq:3}
Let  $\sigma \in \mathfrak{S}_j$ and $\tau\in \mathfrak{S}_k$
be two permutations with 
$\ides(\sigma)=j'$ and $\ides(\tau)=k'$. Then  
\begin{align} \label{eqn:ideseq:pfc}
\sum_{\mu \in \sigma \lozenge \tau} t^{\ides(\mu)}&=\sum_{i\geq 1} \binom{k-k'+j'+1}{i+j'-k'} \binom{j-j'+k'-1}{i-1} t^{i+j'},\\[5pt]
\sum_{\mu \in \sigma \vartriangle \tau} t^{\ides(\mu)}&=\sum_{i\geq 1} \binom{k-k'+j'}{i+j'-k'} \binom{j-j'+k'-1}{i-1} t^{i+j'}, \label{eqn:ideseq:pfccc} \\[5pt]
\sum_{\mu \in \sigma \triangledown \tau} t^{\ides(\mu)}&=\sum_{i\geq 1} \binom{k-k'+j'}{i+j'-k'} \binom{j-j'+k'-1}{i-1} t^{i+j'}. \label{eqn:ideseq:pfcc}
\end{align}

\end{Theorem}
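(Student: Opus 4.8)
The plan is to linearize the inverse-descent statistic by passing to inverse permutations, which turns $\ides(\mu)$ into an ordinary descent count over a shuffle. Write $n=j+k+1$ and set $\rho=\mu^{-1}$, where $\rho(v)$ is the position of the value $v$ in $\mu$. For $\mu=\sigma'1\tau'\in\sigma\lozenge\tau$ the letter $1$ sits at position $j+1$, the values $A$ appearing in $\sigma'$ occupy positions $1,\dots,j$, and the values of $\tau'$ occupy positions $j+2,\dots,n$. Hence $\rho(1)=j+1$; reading the entries $\rho(v)$ for $v\in A$ in increasing order of $v$ reproduces $\sigma^{-1}=:\alpha$, a word on $\{1,\dots,j\}$ with $\des(\alpha)=\ides(\sigma)=j'$; and reading them over the complementary index set reproduces $\beta$, the shift of $\tau^{-1}$ onto $\{j+2,\dots,n\}$, with $\des(\beta)=\ides(\tau)=k'$. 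Thus $\rho=(j+1)\,w$, where $w$ runs over all shuffles of $\alpha$ and $\beta$ as $A$ runs over the $j$-subsets of $\{2,\dots,n\}$, and
\[
\ides(\mu)=\des(\rho)=[\,w_1\le j\,]+\des(w),
\]
with $[\cdot]$ the Iverson bracket. Under this bijection the condition $n\in\tau'$ defining $\sigma\vartriangle\tau$ becomes ``$w$ ends in a letter of $\beta$'', while the condition $2\in\tau'$ defining $\sigma\triangledown\tau$ becomes ``$w_1$ is a letter of $\beta$'', in which case the boundary term $[\,w_1\le j\,]$ automatically vanishes.

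After this reduction the three identities become descent-generating identities over the shuffle set $\mathrm{Sh}(\alpha,\beta)$, refined by the first and/or last letter of $w$, so the core is a refinement of Stanley's shuffle theorem expressing these counts through $j,k,j',k'$ alone. To begin I would invoke the $P$-partition identity $\sum_{w\in\mathrm{Sh}(\alpha,\beta)}F_{\Des(w)}=F_{\Des(\alpha)}F_{\Des(\beta)}$ and apply the principal specialization $F_{\Des(w)}(1^m)=\binom{m+M-1-\des(w)}{M}$ with $M=j+k$, obtaining the polynomial identity
\[
\sum_{d}N_d\binom{m+M-1-d}{M}=\binom{m+j-1-j'}{j}\binom{m+k-1-k'}{k},
\]
where $N_d=\#\{w\in\mathrm{Sh}(\alpha,\beta):\des(w)=d\}$. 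Since the right-hand side depends only on $j,k,j',k'$, the descent-number distribution over shuffles depends only on these four parameters; this frees me to replace $\alpha,\beta$ by convenient representatives whose descents are clustered, reducing the evaluation of $N_d$ to an elementary ``distribute the $\beta$-letters into the gaps of $\alpha$'' count. In the all-increasing case this already produces the clean product $\binom{j}{d}\binom{k}{d}$, and in general a single Vandermonde-type sum.

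It then remains to (i) extract the closed form of $N_d$ from the displayed polynomial identity and (ii) incorporate the first/last-letter constraints together with the boundary term $[\,w_1\le j\,]$. For (i) I would use that $\{\binom{m+M-1-d}{M}\}_d$ is a basis and multiply the generating function by $(1-t)^{M+1}/t$, which turns the extraction into a Vandermonde convolution yielding exactly the products of two binomials in the statement. For (ii) I would carry the extra marking through the specialization: partitioning $\mathrm{Sh}(\alpha,\beta)$ by whether the first (resp.\ last) letter comes from $\alpha$ or from $\beta$ corresponds to prepending or appending a sentinel letter, shifting a length and a descent count by a controlled amount, and this shift is precisely what replaces $\binom{k-k'+j'+1}{\,\cdot\,}$ in \eqref{eqn:ideseq:pfc} by $\binom{k-k'+j'}{\,\cdot\,}$ in \eqref{eqn:ideseq:pfccc} and \eqref{eqn:ideseq:pfcc}. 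As an internal check, the Pascal relation $\binom{k-k'+j'+1}{\,\cdot\,}=\binom{k-k'+j'}{\,\cdot\,}+\binom{k-k'+j'}{\,\cdot-1\,}$ matches the disjoint decomposition $\sigma\lozenge\tau=(\sigma\vartriangle\tau)\sqcup\{\mu:n\in\sigma'\}$, and the coincidence of the right-hand sides of \eqref{eqn:ideseq:pfccc} and \eqref{eqn:ideseq:pfcc} is a consequence to be read off from the resulting expressions rather than assumed.

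The hard part will be step (ii): although the unrefined count $N_d$ is visibly a function of $j,k,j',k'$ only, the first- and last-letter refinements are not obviously so, and pinning them down requires either careful sentinel-letter bookkeeping at the level of quasisymmetric functions, or an explicit lattice-path/insertion model of shuffles in which $\des(w)$, the first letter, and the last letter can all be read off at once. Matching the outcome to the exact binomials of \eqref{eqn:ideseq:pfc}--\eqref{eqn:ideseq:pfcc}, and in particular fixing the summation range so that the lowest power is $t^{\,1+j'}$ (i.e.\ $i=1$), is the delicate computational step; a transparent bijective version of this count would simultaneously answer the authors' stated wish for a direct bijective proof of relation~(a).
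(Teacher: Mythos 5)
Your first reduction is exactly the paper's: passing to $\mu^{-1}$ and separating the letter $j+1$ is Proposition~\ref{idesdes} in different clothing (the paper absorbs the prefix into $\delta=(j+1)(\tau^{-1}+j+1)$, so that $\sigma\lozenge\tau$, $\sigma\vartriangle\tau$, $\sigma\triangledown\tau$ correspond to $\pi\shuffle_l\delta$, $\pi\shuffle_{ls}\delta$, $\pi\shuffle_{ll}\delta$), and your translations of the conditions $n\in\tau'$ and $2\in\tau'$ into last-letter and first-letter constraints on the shuffle are correct, as is the bookkeeping $\ides(\mu)=[\,w_1\le j\,]+\des(w)$. The problem is that after this reduction the entire content of Theorem~\ref{thm:ideseq:3} lives in your step~(ii), and you have not proved it --- you explicitly flag it as ``the hard part'' and only name candidate strategies. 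In particular, your plan to exploit shuffle-compatibility of $\des$ (via the quasisymmetric identity and principal specialization) to ``replace $\alpha,\beta$ by convenient representatives'' is not available for the refined counts: invariance of the first-/last-letter-refined descent distribution in $(j,k,j',k')$ is precisely what must be proved, and it is not a consequence of the unrefined Stanley theorem. Indeed the paper only asserts such invariance under the extra hypotheses it carries through the reduction ($\delta_1<\delta_2$ and all letters of $\delta$ exceeding those of $\pi$), and these hypotheses are used essentially in its argument; without them the refined distributions need not depend on the four parameters alone.

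Where the paper actually does the work is Theorem~\ref{thm:shuffspe}, a $q$-refinement of Stanley's shuffle theorem proved with the Ji--Zhang bijection $\Phi$ sending a shuffle with $d$ descents to a pair of partitions $(\lambda,\mu)$ with $\maj(\alpha)=|\lambda|+|\mu|+\maj(\delta)+\maj(\pi)$: one shows, via the major-increment-sequence propositions, that the constraint $\alpha_1=\delta_1$ is equivalent to the strict bound $\mu_1\le d-s-1$, that additionally fixing the last letter forces $\lambda_1\le m-1$, and that fixing $\alpha_2=\delta_2$ instead forces $\lambda_{d-r}\ge d-s+1$; the three binomial products then drop out by counting partitions in a box, and $q\to1$ gives Theorem~\ref{thm:shuffspecor}, hence Theorem~\ref{thm:ideseq:3}. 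Your alternative route is not hopeless: because you correctly preserved the separation of alphabets, $\Des(w)$ is determined by the interleaving pattern together with $\Des(\alpha)$ and $\Des(\beta)$ (every $\beta$-to-$\alpha$ adjacency is a descent, every $\alpha$-to-$\beta$ adjacency an ascent, and adjacent same-source letters are consecutive in their source word), so a direct lattice-path/gap count of the refined $N_d$ is feasible and would yield a $q=1$ proof more elementary than the paper's; but as submitted, the proposal establishes the setup and sanity checks (the Pascal decomposition of $\sigma\lozenge\tau$) while leaving the three displayed identities unproven.
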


Remark. When $t=1$, we derive  the following two identities, which also follow from the Chu-Vandermonde identity \cite[Example 1.1.17]{Stanley-EC1-2012}: 
\begin{align*}
\binom{j+k}{j} &=
\sum_{i\geq 1} \binom{j-d-1}{i-1} 
\binom{ k+d+1}{k-i+1} ,\\
\binom{j+k-1}{j} &=\sum_{i\geq 1}\binom{j-d-1}{i-1} 
\binom{ k+d}{k-i} .
\end{align*}

By considering the inverses of permutations, Theorem \ref{thm:ideseq:3} can be transformed to special cases of three refinements of Stanley's shuffle theorem (see  Theorem 
\ref{thm:shuffspe}). Stanley's shuffle theorem was first established by Stanley \cite{Stanley-1972} in his study of $P$-partitions. As observed by  Gessel and Zhuang \cite{Gessel-Zhuang-2018}, this theorem 
implies that  the major index (maj) and descent number (des)  are shuffle compatible, which has motivated several recent works, including those by  Adin, Gessel, Reiner  and Roichman \cite{Adin-Gessel-Reiner-Roichman-2021}, Baker-Jarvis and Sagan \cite{Baker-Sagan-2020}, Domagalski, Liang, Minnich and Sagan \cite{Domagalski-Liang-Minnich-2021}, Grinberg \cite{Grinberg-2018},  the second author and Zhang \cite{Ji-Zhang-2022} and Yang and Yan \cite{Yang-Yan-2022}. 
Bijective proofs of Stanley's  Shuffle Theorem have been given by  Goulden \cite{Goulden-1985}, the second author and Zhang \cite{Ji-Zhang-2024} and Stadler \cite{Stadler-1999}.  In particular,  the second author and Zhang \cite{Ji-Zhang-2024} established several refinements of this theorem based on their bijections. The proof of Theorem \ref{thm:ideseq:3} in this paper also relies on their bijection, see Section 4 for more details.

\bigskip

\section{Increasing binary trees}
 
In this section, we aim to demonstrate that the proof of  Theorem \ref{th:main}  is equivalent to proving Theorem \ref{th:equi-shape}  with the aid of the description of   Andr\'e permutations and simsun permutations   in terms of increasing binary trees.

\medskip 

A {\it binary tree} is a rooted tree in which every vertex has 
either (i) no children, (ii) a single left child, (iii) a single right child, or (iv) both a left child and a right child.
Vertices without children are called {\it leaves}, while all others are {\it internal} vertices. An {\it increasing binary tree} on the set $[n]$ is a binary tree with $n$ vertices labeled $1,2,\ldots, n$ such that  the labels along any path from the root are increasing. 

\medskip

It is well known that there exists a bijection $\Psi$ between the set of  permutations on $[n]$ and the set of increasing binary trees on $[n]$, see \cite[Chapter 1]{Stanley-EC1-2012}. More precisely, 
\begin{Definition}[The map $\Psi$] \label{defi:mappsi}
Let $\pi=\pi_1\pi_2\cdots \pi_n$  be a sequence of $n$ distinct letters not necessarily from $1$ to $n$. Define a binary tree $T_\pi$ as follows. If $\pi = \emptyset$, then $T_\pi = \emptyset$. If $\pi \neq \emptyset$, then let $i$ be the least letter of $\pi$. Thus $\pi$ can be factored uniquely in the form $\pi = \sigma i \tau$. Now let $i$ be the root of $T_\pi$, and let $T_\sigma$ and $T_{\tau}$ be the left and right subtrees obtained by removing $i$ (see Figure \ref{fig:inddef}). This yields an inductive definition of $T_\pi$.  
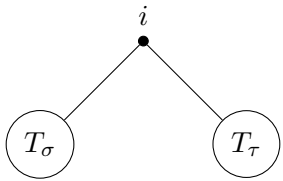
\begin{figure}[h]
    \centering
     \begin{tikzpicture}[
    scale=0.6,
    vertex/.style={shape=circle, draw, inner sep=1.3pt, fill=black},
    subtree/.style={shape=circle, draw},
    sibling distance=1cm,
    level distance=20mm,
    auto
]
\node[vertex, label=above:$i$] (i) at (0,0) {};
\node[subtree, below left=of i,xshift=0mm,yshift=-0mm] (left) {$T_\sigma$};
\draw (i) -- (left);
\node[subtree, below right=of i,xshift=0mm,yshift=-0mm] (right) {$T_{\tau}$};
\draw (i) -- (right);

\end{tikzpicture}
     \caption{An inductive definition of $T_\pi$} \label{fig:inddef}
\end{figure}
\end{Definition}

As observed by Foata and the first author \cite{FH01} and Chow and Shiu \cite{chow2011counting}, when restricted to Andr\'e  permutations and simsun  permutations,  the bijection $\Psi$  induces a
bijection sending Andr\'e permutations and simsun  permutations  to  special cases of increasing binary trees, which we refer to as Andr\'e trees and simsun  trees, respectively.   

Given  an increasing binary  tree $T$ and  a vertex $s$ of $T$, let  $T(s)$ denote the subtree of $T$ with the root $s$, and let $T_l(s)$ and $T_r(s)$ denote the left and right subtrees rooted at $s$, respectively. 

\begin{itemize}

\item An
increasing binary  tree $T$ is said to be an {\it   Andr\'e I tree} if for any internal vertex $s$, the right subtree
$T_r(s)$  contains the vertex of the maximum label in $T(s)$. By convention, the maxima  of an empty subtree is defined as $0$. 

\item An increasing binary tree $T$ is said to be an {\it  Andr\'e II tree} if for any internal vertex $s$, the right subtree
$T_r(s)$  contains the vertex with the minimum  label in $T(s)$ excluding $s$ itself. By convention, the  minima of an empty subtree is defined as $+\infty$. 

\item  An increasing binary tree on $[n]$ is called a {\it simsun  tree} if $n$ is its right-most vertex, and when the vertices $n, n-1, n-2, \ldots, 1$ are removed in sequence, the resulting trees $T'$ satisfy the following property: for any internal vertex $s$ in a left subtree of $T'$, if $T'_l(s)$ (the left subtree of $T'$ with the root $s$) is non-empty, then $T'_r(s)$ (the right subtree of $T'$ with the root $s$) is also non-empty.
\end{itemize}

Similarly, for brevity,   we adopt the notation 
   $n$-Andr\'e trees for Andr\'e trees  on   $[n]$ and $n$-simsun  trees for   simsun  trees on   $[n]$.

\begin{Proposition}[\cite{chow2011counting,FH01}]\label{Andreperincr2} There exists a bijection $\Psi$ between the set   of $n$-Andr\'e I permutations {\rm (}resp. $n$-Andr\'e II permutations, $n$-simsun 
permutations{\rm)}    and the set  of $n$-Andr\'e I trees {\rm (}resp. $n$-Andr\'e II trees, $n$-simsun  trees{\rm)}. 
\end{Proposition}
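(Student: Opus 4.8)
The plan is to build on the fact, recalled immediately before the statement, that the map $\Psi$ of Definition~\ref{defi:mappsi} is already a bijection between the set of all permutations on $[n]$ and the set of all increasing binary trees on $[n]$, its inverse being the in-order traversal that reads, at each vertex, first the left subtree, then the vertex, then the right subtree. Because $\Psi$ is globally bijective, it suffices to verify, for each of the three classes separately, that $\sigma$ belongs to the permutation class if and only if $\Psi(\sigma)$ belongs to the corresponding tree class; the restriction of $\Psi$ is then automatically a bijection onto the desired image, and the three statements follow at once.

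For the two Andr\'e classes I would proceed by induction on $n$, using that the factorization $\sigma=\tau\,\min(\sigma)\,\tau'$ defining Andr\'e permutations coincides with the factorization $\pi=\sigma\,i\,\tau$ (with $i$ the least letter) driving $\Psi$. Under $\Psi$ the letter $\min(\sigma)$ becomes the root $s$, the left factor becomes $T_l(s)$ and the right factor becomes $T_r(s)$. Since $\min(\sigma)$ carries the smallest label of $T(s)$, the largest label of $T(s)$ equals the largest letter of $\tau\tau'$; hence the Andr\'e~I requirement that this largest letter lie in $\tau'$ is exactly the Andr\'e~I tree condition ``$T_r(s)$ contains the maximum label of $T(s)$'' at the root $s$. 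Likewise the smallest label of $T(s)$ other than $s$ is the smallest letter of $\tau\tau'$, so the Andr\'e~II requirement is precisely the Andr\'e~II tree condition at $s$. The induction hypothesis applied to $\tau$ and $\tau'$ then carries the equivalence to every internal vertex, giving both Andr\'e equivalences, with the empty and single-letter words as trivial base cases.

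The simsun case is the real work, and the step I expect to be the main obstacle, since its defining property is not phrased recursively through $\min(\sigma)$. Here I would first establish a local dictionary between consecutive letters of $\sigma$ and the shape of $T=\Psi(\sigma)$. Reading $T$ in-order, a position $k<n$ is a descent, i.e.\ $\sigma_k>\sigma_{k+1}$, exactly when the vertex labelled $\sigma_k$ has no right child: the in-order successor of a right-child-free vertex is an ancestor, hence smaller, whereas the successor of a vertex possessing a right child is a descendant, hence larger. Consequently $\sigma_k>\sigma_{k+1}>\sigma_{k+2}$ is a double descent precisely when the middle vertex $w=\sigma_{k+1}$ has a non-empty left subtree (forcing $\sigma_k>w$), has no right child (forcing $w>\sigma_{k+2}$), and is not the rightmost vertex of $T$. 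A right-child-free vertex fails to be the rightmost vertex exactly when it lies off the rightmost path, that is, inside the left subtree of some vertex; therefore ``$\sigma$ has no double descent'' is equivalent to ``every vertex lying in a left subtree of $T$ whose left subtree is non-empty also has a non-empty right subtree,'' which is the simsun tree condition for $T$.

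Finally I would align the two recursive removal processes. The current maximum always carries the largest remaining label, so it is necessarily a leaf of the current increasing tree, and deleting this leaf realizes under $\Psi$ exactly the deletion of that letter from the word, both operations leaving the remaining in-order reading unchanged. Thus removing $n,n-1,\dots$ from $\sigma$ matches, step by step, removing $n,n-1,\dots$ from $T$; the condition $\sigma_n=n$ matches ``$n$ is the rightmost vertex of $T$''; and applying the no-double-descent dictionary at each stage shows that $\sigma$ is a simsun permutation if and only if $T$ is a simsun tree. Combining the three equivalences proves the proposition.
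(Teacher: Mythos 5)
Your proposal is correct, but it is worth noting that the paper offers no proof of this proposition at all: it is stated as a known result with citations to Chow--Shiu \cite{chow2011counting} and Foata--Han \cite{FH01}, supported only by the $n=4$ illustrations in Figures \ref{fig:bijAndI}--\ref{fig:bijSim}. What you supply is therefore a self-contained verification, and the three ingredients you use are the right ones. For the Andr\'e classes, your observation that the defining factorization $\sigma=\tau\,\min(\sigma)\,\tau'$ is literally the factorization driving $\Psi$ makes the induction immediate, including the degenerate case where $\tau'$ is empty (the conventions $\max(\emptyset)=0$ and $\min(\emptyset)=+\infty$ make both sides fail simultaneously, so Andr\'e trees have no internal vertex with only a left child, consistent with Proposition \ref{th:equi-shape-desb}). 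For the simsun case, your in-order dictionary --- descent at $k$ iff the vertex $\sigma_k$ has no right child, equivalently iff $\sigma_{k+1}$ has a non-empty left subtree --- is exactly the mechanism behind the paper's Proposition \ref{th:equi-shape-des}, and your reduction of ``right-child-free but not rightmost'' to ``lies inside some left subtree'' correctly converts the no-double-descent condition into the simsun tree condition. Your alignment of the two removal processes is also sound, since the maximum label is always a leaf of an increasing binary tree and deleting a leaf commutes with in-order reading; one small point you handle implicitly is that the paper's tree definition imposes the left-implies-right condition only on the trees $T'$ obtained after removals, but the condition for $T$ itself follows, since deleting the rightmost vertex $n$ changes neither membership in left subtrees nor the emptiness of the relevant subtrees. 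In short: where the paper delegates to the literature, your argument proves the statement directly, at the modest cost of redoing the descent dictionary that the paper gets for free from Proposition \ref{th:equi-shape-des}.
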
 

 Fig. \ref{fig:bijAndI} depicts a bijection between the set of $4$-Andr\'e I permutations  and  the set of $4$-Andr\'e I trees, while  Fig. \ref{fig:bijAndII} shows a bijection between  the set of $4$-Andr\'e II permutations and the set of $4$-Andr\'e II trees.   Fig. \ref{fig:bijSim} illustrates a bijection between   the set of  $4$-simsun  permutations and the set of   $4$-simsun  trees.

\begin{figure}
    \centering
     \begin{tikzpicture}
[sibling distance=15mm, level distance=8mm,
every node/.style={circle, draw, fill=white, inner sep=0.5mm},
level 1/.style={sibling distance=15mm},
level 2/.style={sibling distance=15mm},
level 3/.style={sibling distance=15mm},
level 4/.style={sibling distance=15mm}]
\node [circle, draw, fill=white, inner sep=0.5mm] {1}
child {{} edge from parent [white]}
   child { node [circle, draw, fill=white, inner sep=0.5mm] {2}
   child {{} edge from parent [white]}
child { node [circle, draw, fill=white, inner sep=0.5mm] {3}
child {     edge from parent [white] }  child { node [circle, draw, fill=white, inner sep=0.5mm] {4}}}};
\node[draw=none] at (1,-3) {1~2~3~4};
\end{tikzpicture}\qquad
    \begin{tikzpicture}
[sibling distance=15mm, level distance=8mm,
every node/.style={circle, draw, fill=white, inner sep=0.5mm},
level 1/.style={sibling distance=15mm},
level 2/.style={sibling distance=15mm},
level 3/.style={sibling distance=15mm},
level 4/.style={sibling distance=15mm}]
\node [circle, draw, fill=white, inner sep=0.5mm] {1}
child {{} edge from parent [white]}
   child { node [circle, draw, fill=white, inner sep=0.5mm] {2}
child { node [circle, draw, fill=white, inner sep=0.5mm] {3}} child { node [circle, draw, fill=white, inner sep=0.5mm] {4}}};
\node[draw=none] at (1,-2.5) {1~3~2~4};
\end{tikzpicture}\qquad
\begin{tikzpicture}
[sibling distance=15mm, level distance=8mm,
every node/.style={circle, draw, fill=white, inner sep=0.5mm},
level 1/.style={sibling distance=15mm},
level 2/.style={sibling distance=15mm},
level 3/.style={sibling distance=15mm}]
\node [circle, draw, fill=white, inner sep=0.5mm] {1}
child { node [circle, draw, fill=white, inner sep=0.5mm] {2}
child {{} edge from parent [white]} 
child { node [circle, draw, fill=white, inner sep=0.5mm] {3}}}
child { node [circle, draw, fill=white, inner sep=0.5mm] {4}};
\node[draw=none] at (0,-2.5) {2~3~1~4};
\end{tikzpicture}\qquad \quad 
\begin{tikzpicture}
[sibling distance=15mm, level distance=8mm,
every node/.style={circle, draw, fill=white, inner sep=0.5mm},
level 1/.style={sibling distance=15mm},
level 2/.style={sibling distance=15mm},
level 3/.style={sibling distance=15mm},
level 4/.style={sibling distance=15mm}]
\node [circle, draw, fill=white, inner sep=0.5mm] {1}
child { node [circle, draw, fill=white, inner sep=0.5mm] {3}}
child { node [circle, draw, fill=white, inner sep=0.5mm] {2}
   child {{} edge from parent [white]}  child { node [circle, draw, fill=white, inner sep=0.5mm] {4}}};
   \node[draw=none] at (0,-2.3) {3~1~2~4};
\end{tikzpicture} \quad \quad 
\quad
\begin{tikzpicture}
[sibling distance=15mm, level distance=8mm,
every node/.style={circle, draw, fill=white, inner sep=0.5mm},
level 1/.style={sibling distance=15mm},
level 2/.style={sibling distance=15mm},
level 3/.style={sibling distance=15mm},
level 4/.style={sibling distance=15mm}]
\node [circle, draw, fill=white, inner sep=0.5mm] {1}
child { node [circle, draw, fill=white, inner sep=0.5mm] {2}}
child { node [circle, draw, fill=white, inner sep=0.5mm] {3}
   child {{} edge from parent [white]}  child { node [circle, draw, fill=white, inner sep=0.5mm] {4}}};
   \node[draw=none] at (0,-2.3) {2~1~3~4};
\end{tikzpicture}

\caption{The bijection between 4-Andr\'e I trees and 4-Andr\'e I permutations} \label{fig:bijAndI}
\end{figure}
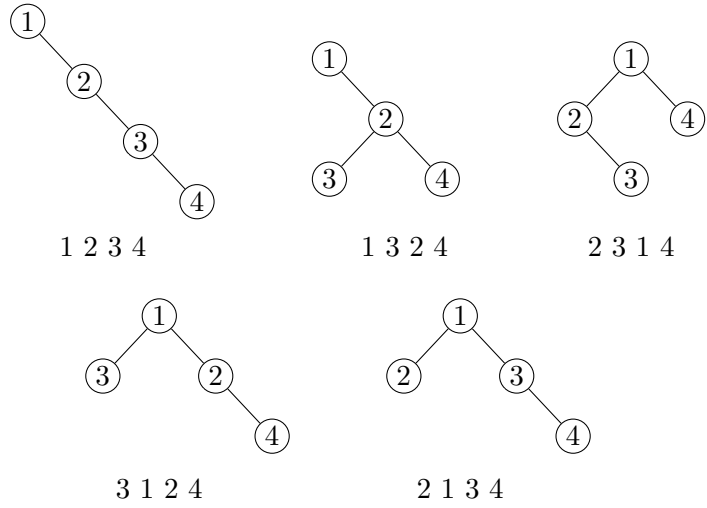
\begin{figure}
    \centering
\begin{tikzpicture}
[sibling distance=15mm, level distance=8mm,
every node/.style={circle, draw, fill=white, inner sep=0.5mm},
level 1/.style={sibling distance=15mm},
level 2/.style={sibling distance=15mm},
level 3/.style={sibling distance=15mm},
level 4/.style={sibling distance=15mm}]
\node [circle, draw, fill=white, inner sep=0.5mm] {1}
child {{} edge from parent [white]}
   child { node [circle, draw, fill=white, inner sep=0.5mm] {2}
   child {{} edge from parent [white]}
child { node [circle, draw, fill=white, inner sep=0.5mm] {3}
child {     edge from parent [white] }  child { node [circle, draw, fill=white, inner sep=0.5mm] {4}}}};
\node[draw=none] at (1,-3.2) {1~2~3~4};
\end{tikzpicture}\quad 
\begin{tikzpicture}
[sibling distance=15mm, level distance=8mm,
every node/.style={circle, draw, fill=white, inner sep=0.5mm},
level 1/.style={sibling distance=15mm},
level 2/.style={sibling distance=15mm},
level 3/.style={sibling distance=15mm},
level 4/.style={sibling distance=15mm}]
\node [circle, draw, fill=white, inner sep=0.5mm] {1}
child {{} edge from parent [white]}
   child { node [circle, draw, fill=white, inner sep=0.5mm] {2}
child { node [circle, draw, fill=white, inner sep=0.5mm] {4}} child { node [circle, draw, fill=white, inner sep=0.5mm] {3}}};
\node[draw=none] at (1,-2.5) {1~4~2~3};
\end{tikzpicture}\quad
\begin{tikzpicture}
[sibling distance=15mm, level distance=8mm,
every node/.style={circle, draw, fill=white, inner sep=0.5mm},
level 1/.style={sibling distance=15mm},
level 2/.style={sibling distance=15mm},
level 3/.style={sibling distance=15mm}]
\node [circle, draw, fill=white, inner sep=0.5mm] {1}
child { node [circle, draw, fill=white, inner sep=0.5mm] {3}
child {{} edge from parent [white]} 
child { node [circle, draw, fill=white, inner sep=0.5mm] {4}}}
child { node [circle, draw, fill=white, inner sep=0.5mm] {2}};
\node[draw=none] at (0,-2.5) {3~4~1~2};
\end{tikzpicture}\\
\begin{tikzpicture}
[sibling distance=15mm, level distance=8mm,
every node/.style={circle, draw, fill=white, inner sep=0.5mm},
level 1/.style={sibling distance=15mm},
level 2/.style={sibling distance=15mm},
level 3/.style={sibling distance=15mm},
level 4/.style={sibling distance=15mm}]
\node [circle, draw, fill=white, inner sep=0.5mm] {1}
child { node [circle, draw, fill=white, inner sep=0.5mm] {4}}
child { node [circle, draw, fill=white, inner sep=0.5mm] {2}
   child {{} edge from parent [white]}  child { node [circle, draw, fill=white, inner sep=0.5mm] {3}}};
   \node[draw=none] at (0,-2.3) {4~1~2~3};
\end{tikzpicture}\qquad
\begin{tikzpicture}
[sibling distance=15mm, level distance=8mm,
every node/.style={circle, draw, fill=white, inner sep=0.5mm},
level 1/.style={sibling distance=15mm},
level 2/.style={sibling distance=15mm},
level 3/.style={sibling distance=15mm},
level 4/.style={sibling distance=15mm}]
\node [circle, draw, fill=white, inner sep=0.5mm] {1}
child { node [circle, draw, fill=white, inner sep=0.5mm] {3}}
child { node [circle, draw, fill=white, inner sep=0.5mm] {2}
   child {{} edge from parent [white]}  child { node [circle, draw, fill=white, inner sep=0.5mm] {4}}};
   \node[draw=none] at (0,-2.3) {3~1~2~4};
\end{tikzpicture}
\caption{The bijection between 4-Andr\'e II trees and 4-Andr\'e II permutations} \label{fig:bijAndII}
\end{figure}

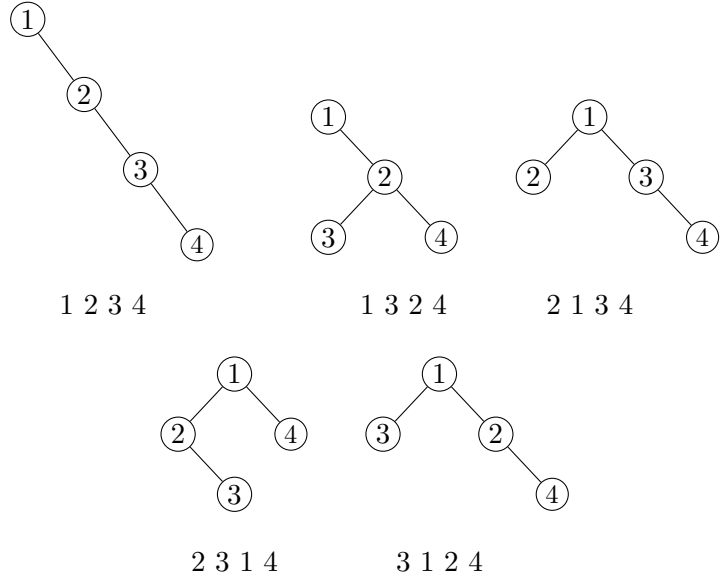
\begin{figure} 
    \centering
     \begin{tikzpicture}
[sibling distance=7mm, level distance=10mm,
every node/.style={circle, draw, fill=white, inner sep=0.5mm},
level 1/.style={sibling distance=15mm},
level 2/.style={sibling distance=15mm},
level 3/.style={sibling distance=15mm},
level 4/.style={sibling distance=15mm}]
\node [circle, draw, fill=white, inner sep=0.5mm] {1}
child {{} edge from parent [white]}
   child { node [circle, draw, fill=white, inner sep=0.5mm] {2}
   child {{} edge from parent [white]}
child { node [circle, draw, fill=white, inner sep=0.5mm] {3}
child {{} edge from parent [white]}
child { node [circle, draw, fill=white, inner sep=0.5mm] {\small{$4$}}}}}
;
\node[draw=none] at (1,-3.8) {1~2~3~4};
\end{tikzpicture}\qquad
    \begin{tikzpicture}
[sibling distance=15mm, level distance=8mm,
every node/.style={circle, draw, fill=white, inner sep=0.5mm},
level 1/.style={sibling distance=15mm},
level 2/.style={sibling distance=15mm},
level 3/.style={sibling distance=15mm},
level 4/.style={sibling distance=15mm}]
\node [circle, draw, fill=white, inner sep=0.5mm] {1}
child {{} edge from parent [white]}
   child { node [circle, draw, fill=white, inner sep=0.5mm] {2}
child { node [circle, draw, fill=white, inner sep=0.5mm] {3}} child {node [circle,draw, fill=white, inner sep=0.5mm]{\small{$4$}} }};
\node[draw=none] at (1,-2.5) {1~3~2~4};
\end{tikzpicture}\qquad 
\begin{tikzpicture}
[sibling distance=15mm, level distance=8mm,
every node/.style={circle, draw, fill=white, inner sep=0.5mm},
level 1/.style={sibling distance=15mm},
level 2/.style={sibling distance=15mm},
level 3/.style={sibling distance=15mm}]
\node [circle, draw, fill=white, inner sep=0.5mm] {1}
child { node [circle, draw, fill=white, inner sep=0.5mm] {2}}
child { node [circle, draw, fill=white, inner sep=0.5mm] {3}
child {{} edge from parent [white]}
child { node [circle, draw, fill=white, inner sep=0.5mm] {\small{$4$}}}};
\node[draw=none] at (0,-2.5) {2~1~3~4};
\end{tikzpicture}\\
\begin{tikzpicture}
[sibling distance=15mm, level distance=8mm,
every node/.style={circle, draw, fill=white, inner sep=0.5mm},
level 1/.style={sibling distance=15mm},
level 2/.style={sibling distance=15mm},
level 3/.style={sibling distance=15mm}]
\node [circle, draw, fill=white, inner sep=0.5mm] {1}
child { node [circle, draw, fill=white, inner sep=0.5mm] {2}
child {{} edge from parent [white]} 
child { node [circle, draw, fill=white, inner sep=0.5mm] {3}}}
child { node [circle, draw, fill=white, inner sep=0.5mm] {\small{${4}$}}};
\node[draw=none] at (0,-2.5) {2~3~1~4};
\end{tikzpicture}\qquad 
\begin{tikzpicture}
[sibling distance=15mm, level distance=8mm,
every node/.style={circle, draw, fill=white, inner sep=0.5mm},
level 1/.style={sibling distance=15mm},
level 2/.style={sibling distance=15mm},
level 3/.style={sibling distance=15mm},
level 4/.style={sibling distance=15mm}]
\node [circle, draw, fill=white, inner sep=0.5mm] {1}
child { node [circle, draw, fill=white, inner sep=0.5mm] {3}}
child { node [circle, draw, fill=white, inner sep=0.5mm] {2}child {{} edge from parent [white]}
child { node [circle, draw, fill=white, inner sep=0.5mm] {\small{$4$}}}};
   \node[draw=none] at (0,-2.5) {3~1~2~4};
\end{tikzpicture} 
\caption{The bijection between 4-simsun  trees and 4-simsun  permutations} \label{fig:bijSim}
\end{figure}

\medskip

The {\it  shape} of a labeled tree $T$ refers to its underlying unlabeled tree, denoted by ${\rm shape}(T)$. From the construction of the bijection $\Psi$, it is not difficult to see that the descent set of $\sigma $ is determined by the   shape of the tree corresponding to $\sigma$ under  the bijection $\Psi$. For a permutation $\sigma=\sigma_1\cdots \sigma_n$, its descent set is defined as  
\[\Des(\sigma)=\{1\leq i\leq n-1\colon \sigma_i > \sigma_{i+1}\}. 
\]
Then its descent number $\des(\sigma)$    is given by 
\begin{equation*}
\des(\sigma) := |\Des(\sigma)|.
\end{equation*}
and its  major index $\maj(\sigma)$   is given by 
\begin{equation*}
\maj(\sigma) := \sum_{i \in \Des(\sigma)} i.
\end{equation*}

\begin{Proposition} \label{th:equi-shape-des} Let $\sigma$ be a permutation, and let $T_\sigma=\Psi(\sigma)$ be the increasing binary tree corresponding to $\sigma$ under the bijection $\Psi$. Then the descent set $\Des(\sigma)$ of $\sigma$ is completely determined by  the shape of $T_\sigma$.   
\end{Proposition}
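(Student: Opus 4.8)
The plan is to produce an explicit criterion for membership in $\Des(\sigma)$ that refers only to the unlabeled tree, by reading the permutation back from its tree. Recall from Definition \ref{defi:mappsi} that $\Psi^{-1}$ recovers $\pi$ from $T_\pi$ by the \emph{in-order} (symmetric-order) traversal: since the root carries the least label $i$ and $\pi=\sigma\,i\,\tau$ with $T_\sigma,T_\tau$ the left and right subtrees, reading ``left subtree, root, right subtree'' reproduces $\sigma\,i\,\tau=\pi$. Writing $v_1,v_2,\ldots,v_n$ for the vertices of $T_\sigma$ listed in in-order, the vertex $v_i$ therefore carries the label $\sigma_i$.

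First I would pin down the in-order successor of a vertex in purely structural terms. For $1\le i<n$, the successor $v_{i+1}$ of $v_i$ is the left-most vertex of the right subtree of $v_i$ when $v_i$ has a right child, and otherwise it is the lowest ancestor $a$ of $v_i$ such that $v_i$ lies in the left subtree of $a$, in which case $v_{i+1}=a$. This is the standard description of in-order successors and follows directly from the recursive ``left, root, right'' reading rule.

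Next I would invoke the \emph{increasing} property of $T_\sigma$, namely that labels strictly increase along every path from the root, to determine the sign of $\sigma_{i+1}-\sigma_i$ in each case. If $v_i$ has a right child, then $v_{i+1}$ is a proper descendant of $v_i$, the root-to-$v_{i+1}$ path passes through $v_i$, so $\sigma_{i+1}>\sigma_i$ and $i\notin\Des(\sigma)$. If $v_i$ has no right child, then $v_{i+1}$ is a proper ancestor of $v_i$, the root-to-$v_i$ path passes through $v_{i+1}$, so $\sigma_{i+1}<\sigma_i$ and $i\in\Des(\sigma)$. Combining the two cases yields the clean equivalence, valid for $1\le i<n$:
$$ i\in\Des(\sigma) \iff v_i \text{ has no right child in } T_\sigma. $$

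Finally I would observe that the right-hand side depends only on ${\rm shape}(T_\sigma)$: the in-order listing $v_1,\ldots,v_n$ and the property ``$v_i$ has a right child'' are features of the underlying unlabeled binary tree, unaffected by the labeling. Hence $\Des(\sigma)$ is completely determined by the shape of $T_\sigma$, which also forces $\des(\sigma)$ and $\maj(\sigma)$ to be shape-invariant. The only step requiring genuine care is the successor description in the no-right-child case together with the accompanying ancestor/descendant comparison of labels; the remainder is bookkeeping.
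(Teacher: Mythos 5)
Your proof is correct and fleshes out exactly what the paper leaves implicit: the paper offers no detailed argument for this proposition, remarking only that it is ``not difficult to see'' from the construction of $\Psi$, and the standard argument it has in mind is precisely your in-order reading of $T_\sigma$ together with the criterion that $i\in\Des(\sigma)$ if and only if the $i$-th vertex in in-order has no right child. Both directions of your successor analysis (right-subtree descendant versus lowest left-ancestor) are handled correctly, so the write-up is a complete and faithful filling-in of the paper's omitted proof.
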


Note that the shape of $T_\sigma$ in Proposition \ref{th:equi-shape-des} is also referred to as the tree shape of the permutation $\sigma$.  Let $\URL_n$ denote the set of the rooted  unlabeled binary trees with $n$ vertices in which no internal vertex has only a left child.  It is not difficult to show that $|\URL_n|=M_n,$ where $M_n$ is the $n$-th Motzkin number defined by 
\[M(x)=1+xM(x)+x^2M(x)=\sum_{n\geq 0}M_n x^n.\]

From the definitions of Andr\'e  trees and simsun  trees, it is easy to verify that the tree shape of Andr\'e permutations and simsun permutations belong to the set $\URL_n$, that is, 

\begin{Proposition} \label{th:equi-shape-desb} Let $\sigma$ be a permutation, and let $T_\sigma=\Psi(\sigma)$ be the increasing binary tree corresponding to $\sigma$ under the bijection $\Psi$. Then  
\begin{align*}
\URL_n&=\{{\rm shape}(T_\sigma)\colon  \sigma \in \AndI_n \}\\
&= \{{\rm shape}(T_\sigma)\colon  \sigma \in \AndII_n \}\\
&= \{{\rm shape}(T_\sigma)\colon  \sigma \in \RS_n \}.
\end{align*}
\end{Proposition}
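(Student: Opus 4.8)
The plan is to prove, for each of the three families $\AndI_n$, $\AndII_n$ and $\RS_n$ separately, the two set inclusions $\{{\rm shape}(T_\sigma)\}\subseteq \URL_n$ and $\URL_n\subseteq\{{\rm shape}(T_\sigma)\}$. Since membership in $\URL_n$ means precisely that no internal vertex has only a left child, the first inclusion amounts to showing that every internal vertex of such a tree has a nonempty right subtree. For an Andr\'e I tree this is immediate: if an internal vertex $s$ had $T_r(s)=\emptyset$, then the genuine (positive) maximum label of $T(s)$ would be forced into the empty right subtree, whose maximum is $0$ by convention, a contradiction. The Andr\'e II case is the same argument run with the minimum label of $T(s)$ excluding $s$ and the $+\infty$ convention.

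For a simsun tree $T$ the first inclusion is subtler, since the defining condition is dynamic. I would argue by a dichotomy on the location of a hypothetical internal vertex $s$ with $T_r(s)=\emptyset$. If $s$ lies on the right spine of $T$, note that the spine terminates at the rightmost in-order vertex, which is $n$; as $n$ is the largest label it is a leaf, so every spine vertex other than $n$ has a right child, forcing $s=n$ and contradicting that $s$ is internal. If instead $s$ lies inside some left subtree, then it is an internal vertex in a left subtree with $T_l(s)\neq\emptyset$ and $T_r(s)=\emptyset$; since $n$ is not contained in any left subtree, deleting it does not change this configuration, so some tree of the deletion sequence (in particular $T$ itself) violates the simsun condition. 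Hence no internal vertex has only a left child.

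For the reverse inclusion I would induct on $n$. If $\tau\in\URL_n$ has an internal root, its left and right subtree shapes $\tau_l,\tau_r$ again lie in $\URL$ (the defining property is hereditary, since an internal vertex of a subtree is internal in the whole tree) and $\tau_r\neq\emptyset$ because $\tau\in\URL_n$. Placing the global minimum at the root, I partition the remaining labels between the two subtrees so that the extremum the family prescribes at the root lands in the right subtree --- the maximum $n$ for Andr\'e I, the second-smallest label $2$ for Andr\'e II --- and then relabel each subtree order-isomorphically to a tree of the same family furnished by the inductive hypothesis. Because the Andr\'e I/II conditions are relative (order-invariant), order-isomorphism preserves them inside the subtrees, while the placement of the extremum secures the condition at the root; this produces a tree of shape $\tau$ in the family, completing the induction.

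The main obstacle is the simsun family, whose definition refers to the whole sequence of trees obtained by deleting $n,n-1,\ldots,1$ rather than to a single local condition, so both inclusions require translating between the iterated condition and the static shape. For the reverse inclusion I cannot simply reuse the Andr\'e induction, since deleting the rightmost leaf can leave its parent with only a left child; instead I would choose the labeling so that inside every left subtree the left branch of each vertex carries strictly larger labels than its right branch. Then deleting the largest labels first removes left branches before right branches, so no internal vertex inside a left subtree ever acquires a left child without a right child, and the removal condition persists at every stage. Verifying this invariant through the whole deletion sequence is the most technical point, and I expect it, together with the simsun direction of the first inclusion, to carry essentially all the content of the proof.
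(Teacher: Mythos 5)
The paper offers no proof of this proposition at all: it is stated with the remark that it is ``easy to verify'' from the definitions of Andr\'e and simsun trees, so your proposal is necessarily a different (and more complete) route --- and it is correct. Your forward inclusions are exactly the intended easy verifications: the empty-subtree conventions ($\max=0$, $\min=+\infty$) kill a lone left child at any internal vertex of an Andr\'e I or II tree, and your dichotomy for simsun trees (right-spine vertices handled by ``$n$ is the right-most vertex and, being maximal, a leaf''; vertices inside left subtrees handled by the deletion condition, unaffected by removing $n$) is sound. Your reverse inclusion for the Andr\'e families, by induction with the minimum at the root and the prescribed extremum ($n$, resp.\ $2$) routed into the right block of labels, implicitly re-proves what the paper exploits elsewhere in recursive form (the decompositions $\AndI(T)=\bigcup\sigma\vartriangle\tau$ and $\AndII(T)=\bigcup\hat\sigma\triangledown\hat\tau$ in the proof of relation (a)), and the order-invariance of the Andr\'e conditions does make the relabeling step legitimate. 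For the simsun reverse inclusion your invariant --- inside every left subtree, all labels of each left branch exceed all labels of the sibling right branch --- does exactly what you claim: since the shape lies in $\URL_n$, a vertex in a left subtree with a surviving left descendant then always has a surviving right descendant, and membership in a left subtree is static under leaf deletion. Two routine repairs are needed to finish: first, the invariant alone does not force $n$ to be the right-most vertex (a two-vertex left subtree can absorb $n$ while vacuously satisfying your invariant, and the resulting tree is not simsun), so you must additionally place $n$ at the terminal leaf of the right spine --- which is a leaf precisely because the shape is in $\URL_n$; second, the existence of such a labeling should be exhibited, e.g.\ by assigning $1,2,\ldots,n$ along a traversal that visits each parent before its children, within every left subtree visits the right subtree entirely before the left, and saves the right-spine terminus for last. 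Neither point changes your architecture, and with them your argument is a complete proof of a statement the paper leaves unproved.
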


   Given an unlabeled binary tree $T$ in $\URL_n$, let $\AndI(T)$ (resp. $\AndII(T)$, $\RS(T)$) denote the set of $n$-Andr\'e I permutations  (resp. $n$-Andr\'e II permutations,  $n$-simsun permutations) with  tree shape $T$. Combining Proposition~\ref{th:equi-shape-des} and Proposition \ref{th:equi-shape-desb},  we see that the proof of Theorem \ref{th:main} is equivalent to  establishing the following result: 
 
\begin{Theorem}\label{th:equi-shape} For $n\geq 1$ and any  unlabeled binary tree $T \in \URL_n$, we have  
  \begin{align}
 	 \sum_{\sigma \in \AndI(T)}   s^{\ides(\sigma)} \overset{(a)}{=}\sum_{\sigma \in \AndII(T)}  s^{\ides(\sigma)} \overset{(b)}{=}\sum_{\sigma \in \RS(T)}  s^{\ides(\sigma)}.
\end{align}
\end{Theorem}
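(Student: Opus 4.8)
The plan is to prove the two equalities $(a)$ and $(b)$ separately, in both cases exploiting the recursive structure of increasing binary trees furnished by the bijection $\Psi$ of Definition \ref{defi:mappsi}. Fix a shape $T\in\URL_n$ and decompose it into its root (which carries the smallest label under any increasing labeling) together with a left subtree of shape $T_L$ and a right subtree of shape $T_R$, of sizes $j$ and $k$ with $j+k+1=n$. Under $\Psi$ a permutation $\mu$ of shape $T$ factors uniquely as $\mu=\sigma'1\tau'$, where $\sigma'$ reduces to a labeling of $T_L$ and $\tau'$ to a labeling of $T_R$; this is precisely the situation encoded by the operations $\lozenge,\vartriangle,\triangledown$. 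First I would record that Andr\'e I and Andr\'e II trees both lie in $\URL_n$ and that the two classes are hereditary (every subtree rooted at an internal vertex is again of the same type), so that the recursion stays inside the relevant families.

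For relation $(a)$ the key point is a dictionary between the tree conditions and the operations of Theorem \ref{thm:ideseq:3}. The Andr\'e I condition at the root, ``the maximum label of $T(s)$ lies in $T_r(s)$'', says exactly that the largest letter $j+k+1$ belongs to $\tau'$, which is the defining condition of $\sigma\vartriangle\tau$; the Andr\'e II condition, ``the second smallest label lies in the right subtree'', says exactly that $2\in\tau'$, which defines $\sigma\triangledown\tau$. Hence
\[
\AndI(T)=\!\!\bigsqcup_{\alpha\in\AndI(T_L),\,\beta\in\AndI(T_R)}\!\!(\alpha\vartriangle\beta),
\qquad
\AndII(T)=\!\!\bigsqcup_{\alpha\in\AndII(T_L),\,\beta\in\AndII(T_R)}\!\!(\alpha\triangledown\beta),
\]
the unions being disjoint since the pair $(\alpha,\beta)$ is recovered from $\mu$ by reduction. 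Summing $t^{\ides(\mu)}$ and invoking \eqref{eqn:ideseq:pfccc} and \eqref{eqn:ideseq:pfcc}, the inner sum over $\alpha\vartriangle\beta$ or $\alpha\triangledown\beta$ depends only on $j$, $k$, $\ides(\alpha)$, $\ides(\beta)$, and the two formulas are identical. Thus each generating function is one fixed bilinear expression in the $\ides$-distributions of the two subtree families, and an induction on $n$ (base case $n=1$, where all three families equal $\{1\}$) closes: the inductive hypothesis supplies equal subtree distributions, and equal kernels then force $\sum_{\sigma\in\AndI(T)}s^{\ides(\sigma)}=\sum_{\sigma\in\AndII(T)}s^{\ides(\sigma)}$.

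For relation $(b)$ I would instead construct an explicit bijection $\Phi_T:\AndII(T)\to\RS(T)$ on the labelings of the fixed shape $T$ that preserves $\ides$; because both sides consist of labelings of the same underlying tree, the shape, and hence $\Des$, $\des$ and $\maj$, are automatically matched, so only $\ides$ must be controlled. The natural attempt is again a recursion in the root decomposition, transporting an Andr\'e II labeling of $(T_L,T_R)$ to a simsun labeling while tracking the relative order of consecutive values, which is what $\ides$ records.

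The main obstacle lies entirely in part $(b)$. Whereas the Andr\'e II condition is local and recursive in the root decomposition, making $(a)$ fall out cleanly once the shuffle refinement of Theorem \ref{thm:ideseq:3} is in hand, the simsun condition is defined through the \emph{global} process of deleting $n,n-1,\dots,1$ in succession, with $n$ (rather than the minimum) as the distinguished vertex. Reconciling these two incompatible recursive viewpoints while preserving $\ides$ is the crux: one must verify both that $\Phi_T$ outputs a genuine simsun tree (checking the deletion condition at every stage) and that the map creates and destroys no inversion between consecutive values. I expect this verification, to be carried out in Section 4 through a bijection between $\AndII_n$ and $\RS_n$, to be the technically demanding step, with relation $(a)$ essentially reduced to the combinatorial identity already isolated as Theorem \ref{thm:ideseq:3}.
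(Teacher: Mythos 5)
Your part (a) is essentially the paper's own proof: the same root decomposition $\mu=\sigma'1\tau'$ under $\Psi$, the same translation of the Andr\'e I condition (largest letter in $\tau'$) into $\vartriangle$ and of the Andr\'e II condition ($2\in\tau'$) into $\triangledown$, and the same observation that the right-hand sides of \eqref{eqn:ideseq:pfccc} and \eqref{eqn:ideseq:pfcc} coincide and depend only on $j$, $k$, $\ides(\alpha)$, $\ides(\beta)$ (the paper packages this as Corollary \ref{cor:ideseq:2}), closed by induction on $n$. That half is complete and correct.

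Part (b), however, is a genuine gap: you never construct the bijection, you only announce that ``the natural attempt is again a recursion in the root decomposition'' and then, rightly, flag that the simsun condition is global. That flag is exactly why your sketched route would fail: unlike $\AndII(T)$, the set $\RS(T)$ does \emph{not} factor over the root as a union of $\alpha\,\square\,\beta$-type products of simsun labelings of $T_L$ and $T_R$, because the defining deletion process (removing $n,n-1,\ldots,1$, with $n$ forced to be the rightmost vertex) constrains how the actual label values interleave between the two subtrees, not merely their reduced patterns. The paper resolves this with a single \emph{global} relabeling $\Omega$ (Theorem \ref{thm:bij}), in the spirit of jeu de taquin rather than a root recursion: writing $R_{\hat T}=\{v_0=1<v_1<\cdots<v_m=n\}$ for the vertices of the simsun tree $\hat T=\Psi(\sigma)$ that lie in no left subtree, one relabels $v_0\mapsto 1$ and $v_i\mapsto v_{i-1}+1$ for $i\geq 1$, and adds one to every other label. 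Verifying that the result $\widetilde T$ is an Andr\'e II tree uses the simsun property (at the relevant internal vertices the minimum of the left subtree exceeds the minimum of the right subtree), and $\ides$-preservation is proved not by tracking consecutive values through a recursion but via the notion of idescents of increasing binary trees, showing the clean shift $i\in\IDes(\hat T)\Leftrightarrow i+1\in\IDes(\widetilde T)$, whence $\ides(\tau)=\ides(\sigma)$ (Proposition \ref{simsun _tree_Andre2_tree_bijection}). Without some such explicit construction and these two verifications, your relation (b) remains a plan, so the proposal as written establishes only half of the theorem.
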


\medskip 

Figure \ref{fig:thmm} lists $8$-Andr\'e I permutations, $8$-Andr\'e II permutations   and $8$-simsun  permutations of the same tree shape $T$, all of which have $4$ inverse descents and share the descent set $\Des=\{1,4,6\}$.

\begin{figure}
\centering
\caption*{(a) $8$-Andr\'e I  permutations: }\label{fig:AndreI}
\vspace{-20pt}
\begin{tikzpicture}
[sibling distance=15mm, level distance=8mm,
every node/.style={circle, draw, fill=white, inner sep=0.5mm},
level 1/.style={sibling distance=17mm},
level 2/.style={sibling distance=14mm},
level 3/.style={sibling distance=8mm}]
\node [circle, draw, fill=white, inner sep=0.5mm] {1}
child { node [circle, draw, fill=white, inner sep=0.5mm] {2}
child {     edge from parent [white] } child {     edge from parent [white] } } child { node [circle, draw, fill=white, inner sep=0.5mm] {3}
child { node [circle, draw, fill=white, inner sep=0.5mm] {4}
child {     edge from parent [white] } child { node [circle, draw, fill=white, inner sep=0.5mm] {7}
child {     edge from parent [white] } child {     edge from parent [white] } } } child { node [circle, draw, fill=white, inner sep=0.5mm] {5}
child { node [circle, draw, fill=white, inner sep=0.5mm] {6}
child {     edge from parent [white] } child {     edge from parent [white] } } child { node [circle, draw, fill=white, inner sep=0.5mm] {8}
child {     edge from parent [white] } child {     edge from parent [white] } } } };
\node[draw=none,fill=none] at (0,0.8) {2\,1\,4\,7\,3\,6\,5\,8};
\end{tikzpicture}
\kern -4mm
\begin{tikzpicture}
[sibling distance=15mm, level distance=8mm,
every node/.style={circle, draw, fill=white, inner sep=0.5mm},
level 1/.style={sibling distance=17mm},
level 2/.style={sibling distance=14mm},
level 3/.style={sibling distance=8mm}]
\node [circle, draw, fill=white, inner sep=0.5mm] {1}
child { node [circle, draw, fill=white, inner sep=0.5mm] {4}
child {     edge from parent [white] } child {     edge from parent [white] } } child { node [circle, draw, fill=white, inner sep=0.5mm] {2}
child { node [circle, draw, fill=white, inner sep=0.5mm] {3}
child {     edge from parent [white] } child { node [circle, draw, fill=white, inner sep=0.5mm] {7}
child {     edge from parent [white] } child {     edge from parent [white] } } } child { node [circle, draw, fill=white, inner sep=0.5mm] {5}
child { node [circle, draw, fill=white, inner sep=0.5mm] {6}
child {     edge from parent [white] } child {     edge from parent [white] } } child { node [circle, draw, fill=white, inner sep=0.5mm] {8}
child {     edge from parent [white] } child {     edge from parent [white] } } } };
\node[draw=none,fill=none] at (0.3, 0.8) {4\,1\,3\,7\,2\,6\,5\,8}; \end{tikzpicture}
\kern -4mm
\begin{tikzpicture}
[sibling distance=15mm, level distance=8mm,
every node/.style={circle, draw, fill=white, inner sep=0.5mm},
level 1/.style={sibling distance=17mm},
level 2/.style={sibling distance=14mm},
level 3/.style={sibling distance=8mm}]
\node [circle, draw, fill=white, inner sep=0.5mm] {1}
child { node [circle, draw, fill=white, inner sep=0.5mm] {7}
child {     edge from parent [white] } child {     edge from parent [white] } } child { node [circle, draw, fill=white, inner sep=0.5mm] {2}
child { node [circle, draw, fill=white, inner sep=0.5mm] {3}
child {     edge from parent [white] } child { node [circle, draw, fill=white, inner sep=0.5mm] {6}
child {     edge from parent [white] } child {     edge from parent [white] } } } child { node [circle, draw, fill=white, inner sep=0.5mm] {4}
child { node [circle, draw, fill=white, inner sep=0.5mm] {5}
child {     edge from parent [white] } child {     edge from parent [white] } } child { node [circle, draw, fill=white, inner sep=0.5mm] {8}
child {     edge from parent [white] } child {     edge from parent [white] } } } };
\node[draw=none,fill=none] at (0.3, 0.8) {7\,1\,3\,6\,2\,5\,4\,8};
\end{tikzpicture}
\vspace{5pt} 
 
 \caption*{(b) $8$-Andr\'e II  permutations:}\label{fig:AndreII}
 \vspace{-20pt}
\begin{tikzpicture}
[sibling distance=15mm, level distance=8mm,
every node/.style={circle, draw, fill=white, inner sep=0.5mm},
level 1/.style={sibling distance=17mm},
level 2/.style={sibling distance=14mm},
level 3/.style={sibling distance=8mm}]
\node [circle, draw, fill=white, inner sep=0.5mm] {1}
child { node [circle, draw, fill=white, inner sep=0.5mm] {3}
child {     edge from parent [white] } child {     edge from parent [white] } } child { node [circle, draw, fill=white, inner sep=0.5mm] {2}
child { node [circle, draw, fill=white, inner sep=0.5mm] {5}
child {     edge from parent [white] } child { node [circle, draw, fill=white, inner sep=0.5mm] {8}
} } child { node [circle, draw, fill=white, inner sep=0.5mm] {4}
child { node [circle, draw, fill=white, inner sep=0.5mm] {7}
child {     edge from parent [white] } child {     edge from parent [white] } } child { node [circle, draw, fill=white, inner sep=0.5mm] {6}
child {     edge from parent [white] } child {     edge from parent [white] } } } };
\node[draw=none,fill=none] at (0.3,0.8) {3\,1\,5\,8\,2\,7\,4\,6};
\end{tikzpicture}
\kern -4mm
\begin{tikzpicture}
[sibling distance=15mm, level distance=8mm,
every node/.style={circle, draw, fill=white, inner sep=0.5mm},
level 1/.style={sibling distance=17mm},
level 2/.style={sibling distance=14mm},
level 3/.style={sibling distance=8mm}]
\node [circle, draw, fill=white, inner sep=0.5mm] {1}
child { node [circle, draw, fill=white, inner sep=0.5mm] {5}
child {     edge from parent [white] } child {     edge from parent [white] } } child { node [circle, draw, fill=white, inner sep=0.5mm] {2}
child { node [circle, draw, fill=white, inner sep=0.5mm] {4}
child {     edge from parent [white] } child { node [circle, draw, fill=white, inner sep=0.5mm] {8}
child {     edge from parent [white] } child {     edge from parent [white] } } } child { node [circle, draw, fill=white, inner sep=0.5mm] {3}
child { node [circle, draw, fill=white, inner sep=0.5mm] {7}
child {     edge from parent [white] } child {     edge from parent [white] } } child { node [circle, draw, fill=white, inner sep=0.5mm] {6}
child {     edge from parent [white] } child {     edge from parent [white] } } } };
\node[draw=none,fill=none] at (0.3,0.8) {5\,1\,4\,8\,2\,7\,3\,6};
\end{tikzpicture}
\kern -4mm
\begin{tikzpicture}
[sibling distance=15mm, level distance=8mm,
every node/.style={circle, draw, fill=white, inner sep=0.5mm},
level 1/.style={sibling distance=17mm},
level 2/.style={sibling distance=14mm},
level 3/.style={sibling distance=8mm}]
\node [circle, draw, fill=white, inner sep=0.5mm] {1}
child { node [circle, draw, fill=white, inner sep=0.5mm] {8}
child {     edge from parent [white] } child {     edge from parent [white] } } child { node [circle, draw, fill=white, inner sep=0.5mm] {2}
child { node [circle, draw, fill=white, inner sep=0.5mm] {4}
child {     edge from parent [white] } child { node [circle, draw, fill=white, inner sep=0.5mm] {7}
child {     edge from parent [white] } child {     edge from parent [white] } } } child { node [circle, draw, fill=white, inner sep=0.5mm] {3}
child { node [circle, draw, fill=white, inner sep=0.5mm] {6}
child {     edge from parent [white] } child {     edge from parent [white] } } child { node [circle, draw, fill=white, inner sep=0.5mm] {5}
child {     edge from parent [white] } child {     edge from parent [white] } } } };
\node[draw=none,fill=none] at (0.3,0.8) {8\,1\,4\,7\,2\,6\,3\,5};
\end{tikzpicture}
\vspace{5pt}

\caption*{(c) $8$-simsun  permutations:}\label{fig:simsun }
\vspace{-25pt}
\begin{tikzpicture}
[sibling distance=15mm, level distance=8mm,
every node/.style={circle, draw, fill=white, inner sep=0.5mm},
level 1/.style={sibling distance=17mm},
level 2/.style={sibling distance=14mm},
level 3/.style={sibling distance=8mm}]
\node [circle, draw, fill=white, inner sep=0.5mm] {1}
child { node [circle, draw, fill=white, inner sep=0.5mm] {2}
child {     edge from parent [white] } child {     edge from parent [white] } } child { node [circle, draw, fill=white, inner sep=0.5mm] {3}
child { node [circle, draw, fill=white, inner sep=0.5mm] {4}
child {     edge from parent [white] } child { node [circle, draw, fill=white, inner sep=0.5mm] {7}
 } } child { node [circle, draw, fill=white, inner sep=0.5mm] {5}
child { node [circle, draw, fill=white, inner sep=0.5mm] {6}
} child {   node [circle, draw, fill=white, inner sep=0.5mm] {8} } } };
\node[draw=none,fill=none] at (0.3,0.8) {\text{2\,1\,4\,7\,3\,6\,5\,8}};
\end{tikzpicture}
\kern -4mm
\begin{tikzpicture}
[sibling distance=15mm, level distance=8mm,
every node/.style={circle, draw, fill=white, inner sep=0.5mm},
level 1/.style={sibling distance=17mm},
level 2/.style={sibling distance=14mm},
level 3/.style={sibling distance=8mm}]
\node [circle, draw, fill=white, inner sep=0.5mm] {1}
child { node [circle, draw, fill=white, inner sep=0.5mm] {4}
child {     edge from parent [white] } child {     edge from parent [white] } } child { node [circle, draw, fill=white, inner sep=0.5mm] {2}
child { node [circle, draw, fill=white, inner sep=0.5mm] {3}
child {     edge from parent [white] } child { node [circle, draw, fill=white, inner sep=0.5mm] {7}
 } } child { node [circle, draw, fill=white, inner sep=0.5mm] {5}
child { node [circle, draw, fill=white, inner sep=0.5mm] {6}
} child {    node [circle, draw, fill=white, inner sep=0.5mm] {8} } } };
\node[draw=none, fill=none, inner sep=0pt, minimum size=0pt] at (0.3,0.8) {4\,1\,3\,7\,2\,6\,5\,8};
\end{tikzpicture}
\kern -4mm
\begin{tikzpicture}
[sibling distance=15mm, level distance=8mm,
every node/.style={circle, draw, fill=white, inner sep=0.5mm},
level 1/.style={sibling distance=17mm},
level 2/.style={sibling distance=14mm},
level 3/.style={sibling distance=8mm}]
\node [circle, draw, fill=white, inner sep=0.5mm] {1}
child { node [circle, draw, fill=white, inner sep=0.5mm] {7}
child {     edge from parent [white] } child {     edge from parent [white] } } child { node [circle, draw, fill=white, inner sep=0.5mm] {2}
child { node [circle, draw, fill=white, inner sep=0.5mm] {3}
child {     edge from parent [white] } child { node [circle, draw, fill=white, inner sep=0.5mm] {6}
 } } child { node [circle, draw, fill=white, inner sep=0.5mm] {4}
child { node [circle, draw, fill=white, inner sep=0.5mm] {5}
} child {     node [circle, draw, fill=white, inner sep=0.5mm] {$8$}  } } };
\node[draw=none,fill=none] at (0.3,0.8) {7\,1\,3\,6\,2\,5\,4\,8};
\end{tikzpicture}
\caption{$8$-Andr\'e I  permutations, $8$-Andr\'e I  permutations and $8$-simsun     permutations with $\Des=\{1,4,6\}$ and $\ides=4$} \label{fig:thmm}
\end{figure}
    \medskip 

The proof of Theorem \ref{th:equi-shape}  is divided into two parts, (a) and (b). It turns out that the proof of relation  (a)   reduces to  investigating the shuffle of permutations (see   Section 3) and the proof of relation  (b)   proceeds by   constructing a bijection between the set of  $n$-Andr\'e II permutations and the set of the $n$-simsun  permutations (see Section 4).

\section{A more general result on permutations}

The main objective of this section is to prove Theorem \ref{thm:ideseq:3}. Before proceeding, we first demonstrate how to derive    relation (a) in Theorem \ref{th:equi-shape}  using Theorem \ref{thm:ideseq:3}.  To this end, we begin by stating the following corollary, which is an immediate consequence of  Theorem \ref{thm:ideseq:3}. 

\begin{Corollary} \label{cor:ideseq:2}
Let $\sigma $ and $\hat{\sigma}$ be two permutations in $\mathfrak{S}_j$ such that $\ides(\sigma)=\ides(\hat{\sigma})=j'$, and let $\tau$ and $\hat{\tau}$ be two permutations in $\mathfrak{S}_k$ such that $\ides(\tau)=\ides(\hat{\tau})=k'$. Then
\begin{equation*}
\sum_{\mu \in \sigma \vartriangle  \tau}t^{\ides(\mu)} =\sum_{\mu \in \hat{\sigma} \triangledown   \hat{\tau}}t^{\ides(\mu)}. 
\end{equation*}
 
\end{Corollary}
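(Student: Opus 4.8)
The plan is to obtain the corollary as an immediate consequence of Theorem \ref{thm:ideseq:3}, and the key observation that drives everything is that the right-hand sides of equations \eqref{eqn:ideseq:pfccc} and \eqref{eqn:ideseq:pfcc} are literally the same expression, one governing the $\vartriangle$-shuffle and the other the $\triangledown$-shuffle. Since both right-hand sides depend only on the four integer parameters $j$, $k$, $j'$, $k'$ and not on any finer structure of the permutations involved, the strategy is simply to check that both configurations in the statement feed the same values of these parameters into the same formula.

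First I would apply equation \eqref{eqn:ideseq:pfccc} to the pair $(\sigma,\tau)$. Since $\sigma \in \mathfrak{S}_j$, $\tau \in \mathfrak{S}_k$, $\ides(\sigma)=j'$, and $\ides(\tau)=k'$, this yields
\begin{equation*}
\sum_{\mu \in \sigma \vartriangle \tau} t^{\ides(\mu)}
=\sum_{i\geq 1} \binom{k-k'+j'}{i+j'-k'} \binom{j-j'+k'-1}{i-1} t^{i+j'}.
\end{equation*}
Next I would apply equation \eqref{eqn:ideseq:pfcc} to the pair $(\hat{\sigma},\hat{\tau})$. By hypothesis $\hat{\sigma}\in\mathfrak{S}_j$ with $\ides(\hat{\sigma})=j'$ and $\hat{\tau}\in\mathfrak{S}_k$ with $\ides(\hat{\tau})=k'$, so the very same integer parameters $j,k,j',k'$ enter the formula, giving
\begin{equation*}
\sum_{\mu \in \hat{\sigma} \triangledown \hat{\tau}} t^{\ides(\mu)}
=\sum_{i\geq 1} \binom{k-k'+j'}{i+j'-k'} \binom{j-j'+k'-1}{i-1} t^{i+j'}.
\end{equation*}

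Comparing the two displays term by term shows that they are identical, which is exactly the asserted equality. The only point that requires care—and it is a matter of bookkeeping rather than a genuine obstacle—is verifying the parameter matching: one must confirm that the hypotheses $\ides(\sigma)=\ides(\hat{\sigma})=j'$, $\ides(\tau)=\ides(\hat{\tau})=k'$, together with $\sigma,\hat{\sigma}\in\mathfrak{S}_j$ and $\tau,\hat{\tau}\in\mathfrak{S}_k$, indeed force the $\vartriangle$-formula evaluated at $(\sigma,\tau)$ and the $\triangledown$-formula evaluated at $(\hat{\sigma},\hat{\tau})$ to share all four arguments $(j,k,j',k')$. Once this is observed, there is nothing further to prove, since all the combinatorial substance has already been absorbed into Theorem \ref{thm:ideseq:3}; the corollary is purely the statement that those two closed forms coincide.
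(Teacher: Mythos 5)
Your proposal is correct and matches the paper exactly: the paper derives the corollary as an immediate consequence of Theorem \ref{thm:ideseq:3}, precisely because the right-hand sides of \eqref{eqn:ideseq:pfccc} and \eqref{eqn:ideseq:pfcc} are the same expression in the parameters $j$, $k$, $j'$, $k'$ alone. Nothing further is needed.
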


\medskip

We are ready to establish   relation (a) in Theorem \ref{th:equi-shape} using Corollary~\ref{cor:ideseq:2}. 

\medskip 

\noindent{\it Proof of relation {\rm (}a{\rm )} in Theorem~ \ref{th:equi-shape}.} We proceed by induction on  $n$.   For $n=1$, relation  (a) clearly holds. Assume that it  holds for all   $p<n$. We aim to show that it also holds for $n$.  Let $T$ be an unlabeled (rooted) binary tree in $\URL_n$ with the left subtree $T^l$ and the right subtree $T^r$ of the root, respectively. 
By the definition of Andr\'e permutations, we have  
\[\AndI(T)=\bigcup_{\sigma \in \AndI(T^l)}  \bigcup_{ \tau \in \AndI(T^r)} \sigma \vartriangle  \tau \]
and 
\[\AndII(T)=\bigcup_{\hat{\sigma} \in \AndII(T^l)}  \bigcup_{ \hat{\tau} \in \AndII(T^l)} \hat{\sigma} \triangledown  \hat{\tau}. \]
This implies that 
\begin{align}
\sum_{\mu \in \AndI(T)}t^{\ides(\mu)}&=\sum_{\sigma \in \AndI(T^l) } \sum_{\tau \in  \AndI(T^r)}\sum_{\mu \in \sigma \vartriangle  \tau}t^{\ides(\mu)} \label{pf:lem7m1} \\[5pt]
\sum_{\mu \in  \AndII(T)}t^{\ides(\mu)}&=\sum_{\hat{\sigma} \in  \AndII(T^l) } \sum_{\hat{\tau} \in  \AndII(T^r)}\sum_{\mu \in \hat{\sigma}\triangledown   \hat{\tau}}t^{\ides(\mu)}.  \label{pf:lem7m2}
\end{align}
Note that  ${T}^l$ and $T^r$ are the left and right subtrees of the root of $T$, so their vertices are less than $n$.  
%
%
The induction hypothesis
implies that there exists a bijection $\phi^l$ between $\AndI({T}^l)$   and $\AndII({T}^l)$   such  that for $\sigma \in \AndI({T}^l)$   and $\phi(\sigma) \in \AndII({T}^l)$, we have $\ides(\sigma)=\ides(\phi^l(\sigma)).$ 
Similarly,  there exists a bijection $\phi^r$ between $\AndI({T}^r)$  and $\AndII({T}^r)$ such  that for $\tau \in \AndI({T}^r)$ and $\phi^r(\tau ) \in \AndII({T}^r)$, we have $\ides(\tau)=\ides(\phi^r(\tau )).$  
Hence by Corollary~\ref{cor:ideseq:2}, we arrive at  \begin{equation} \label{pf:lem7m3}
\sum_{\mu \in \sigma \vartriangle  \tau}t^{\ides(\mu )} =\sum_{\mu \in \phi^l(\sigma) \triangledown   \phi^r(\tau)}t^{\ides(\mu)}. 
\end{equation}
We therefore derive that 
\begin{align*}
\sum_{\mu \in \AndI(T)}t^{\ides(\mu)}&\stackrel{\eqref{pf:lem7m1}}{=}\sum_{\sigma \in \AndI(T^l) } \sum_{\tau \in \AndI(T^r)}\sum_{\mu  \in \sigma \vartriangle  \tau}t^{\ides(\mu)}   \\[5pt]
&\stackrel{\eqref{pf:lem7m3}}{=}\sum_{\phi^l(\sigma) \in \AndII(T^l) } \sum_{\phi^r(\tau) \in \AndII(T^r)}\sum_{\mu \in \phi^l(\sigma) \triangledown   \phi^r(\tau)}t^{\ides(\mu)} \\[5pt]
&\stackrel{\eqref{pf:lem7m2}}{=}\sum_{\mu  \in \AndII(T)}t^{\ides(\mu)}.
\end{align*}
This confirms that relation  (a) also holds for $n$. Thus, we complete the proof of  Theorem~ \ref{th:equi-shape} (a).  \qed 

\medskip 

We proceed to prove Theorem \ref{thm:ideseq:3}. As mentioned previously, the proof of Theorem  \ref{thm:ideseq:3} boils down to studying the shuffles of permutations.    Let $\mathcal{S}_n$ denote the set of permutations of length $n$, where a permutation is defined as a sequence of 
$n$ distinct integers (not necessarily restricted to $\{1,2,\ldots, n\}$).  Let $\pi \in \mathcal{S}_j$ and $\delta \in \mathcal{S}_k$ be two disjoint permutations, that is, permutations with no letters in common. We say that $\alpha \in \mathcal{S}_{j+k}$ is a shuffle of $\pi $ and $\delta$ if both $\pi$ and $\delta$ are subsequences of $\alpha$. The set of shuffles of $\pi $ and $\delta$ is denoted $\pi \shuffle \delta$.  For example, let $\pi=263$ and $\delta=14$, we have $
263\shuffle 14 = \{26314, 26134, 26143, 21463, 21634, 21643, 12463, 14263, 12634, 12643\}. $

\medskip 

 In his study of the theory of $P$-partitions,   Stanley \cite{Stanley-1972}  established the following formula for the joint statistic $(\des, \maj)$ over the set of permutation shuffles, which is referred to as Stanley's shuffle theorem.  Bijective proofs were later found by  Goulden \cite{Goulden-1985}, the second author and Zhang \cite{Ji-Zhang-2024} and Stadler \cite{Stadler-1999}. Recall that the Gaussian polynomial (also  called the $q$-binomial coefficients) is given by
\[{n \brack m}=\frac{(1-q^n)(1-q^{n-1})\cdots (1-q^{n-m+1})}{(1-q^m)(1-q^{m-1})\cdots (1-q)}.\] 
\begin{Theorem}[Stanley's Shuffle Theorem]\label{stanley}
Let $\pi \in \mathcal{S}_m$ and $\delta \in \mathcal{S}_n$ be two disjoint permutations, where ${\rm des}(\pi)=r$  and  ${\rm des}(\delta)=s$.  Then
\begin{equation*}
   \sum_{\alpha\in  \pi \shuffle \delta  \atop {\rm des}(\alpha)=k}q^{{\rm maj}(\alpha)}= {m-r+s \brack k-r} {n-s+r  \brack  k-s}  q^{{\rm maj}(\pi)+{\rm maj}(\delta)+(k-s)(k-r)}.
\end{equation*}
\end{Theorem}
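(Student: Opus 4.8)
The plan is to prove Stanley's Shuffle Theorem by the classical theory of $P$-partitions and then to isolate the refined (fixed-$\des$) generating function from a single $q$-binomial convolution identity.

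\emph{Step 1 (one word).} For a word $w=w_1\cdots w_\ell$ of distinct letters, call a sequence $N\ge a_1\ge\cdots\ge a_\ell\ge0$ a \emph{$w$-compatible} sequence if $a_i>a_{i+1}$ for every $i\in\Des(w)$. The standard staircase substitution gives the bounded generating function
\[
\sum_{N\ge0}z^N\!\!\sum_{\substack{N\ge a_1\ge\cdots\ge a_\ell\ge0\\ a_i>a_{i+1}\,(i\in\Des(w))}}\!\!q^{a_1+\cdots+a_\ell}=\frac{z^{\des(w)}q^{\maj(w)}}{(z;q)_{\ell+1}},\qquad (z;q)_{\ell+1}=\prod_{i=0}^{\ell}(1-zq^i),
\]
and extracting $[z^N]$ via $1/(z;q)_{L+1}=\sum_{n\ge0}{n+L\brack L}z^n$ shows that the weighted number of $w$-compatible sequences with parts $\le N$ equals $q^{\maj(w)}{N-\des(w)+\ell\brack\ell}$.

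\emph{Step 2 (two chains).} Let $(P,\omega)$ be the labeled poset equal to the disjoint union of a chain labeled in order by $\pi$ and a chain labeled in order by $\delta$; disjointness of $\pi$ and $\delta$ makes $\omega$ injective. The linear extensions of $P$, read off as label-words, are precisely the shuffles $\alpha\in\pi\shuffle\delta$, while a $(P,\omega)$-partition with all parts $\le N$ is exactly an independent pair consisting of a $\pi$-compatible and a $\delta$-compatible sequence, each bounded by $N$. The Fundamental Theorem of $P$-partitions, $\mathcal{A}(P,\omega)=\bigsqcup_{\alpha}\mathcal{A}(\alpha)$, together with Step 1 and summation against $z^N$, then yields the master identity
\[
q^{\maj(\pi)+\maj(\delta)}\sum_{N\ge0}z^N{N-r+m\brack m}{N-s+n\brack n}=\frac{1}{(z;q)_{m+n+1}}\sum_{\alpha\in\pi\shuffle\delta}z^{\des(\alpha)}q^{\maj(\alpha)}.
\]

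\emph{Step 3 (extraction) and the main obstacle.} Taking $[z^N]$ on both sides gives, for every $N\ge0$,
\[
q^{\maj(\pi)+\maj(\delta)}{N-r+m\brack m}{N-s+n\brack n}=\sum_{k}\Big(\sum_{\substack{\alpha\in\pi\shuffle\delta\\ \des(\alpha)=k}}q^{\maj(\alpha)}\Big){N-k+m+n\brack m+n}.
\]
Since the matrix $\big({N-k+m+n\brack m+n}\big)_{N,k\ge0}$ is lower-unitriangular (the entry contributes only for $k\le N$ and equals $1$ when $k=N$), the inner sums are the unique coefficients solving these relations as $N$ varies. Hence it suffices to verify that the claimed value ${m-r+s\brack k-r}{n-s+r\brack k-s}\,q^{\maj(\pi)+\maj(\delta)+(k-r)(k-s)}$ satisfies them, that is, to prove the $q$-binomial convolution
\[
{N-r+m\brack m}{N-s+n\brack n}=\sum_{k}q^{(k-r)(k-s)}{m-r+s\brack k-r}{n-s+r\brack k-s}{N-k+m+n\brack m+n}.
\]
I expect this last $q$-series identity to be the main obstacle: the $P$-partition steps are purely formal once the descent conventions are fixed, whereas the convolution is a form of the $q$-Chu--Vandermonde identity (degenerating at $q=1$ to the ordinary Vandermonde convolution recorded in the Remark) and requires a careful $q$-Vandermonde manipulation. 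Alternatively, one may bypass $P$-partitions entirely and argue bijectively in the spirit of Goulden or Ji--Zhang, constructing each shuffle $\alpha$ from $\pi$ and $\delta$ while controlling $(\des,\maj)$ directly; this is the route on which the present paper ultimately relies for its refinements.
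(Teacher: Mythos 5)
Your Steps 1 and 2 are sound and standard: the staircase substitution, the disjoint-union poset, and the Fundamental Theorem of $P$-partitions correctly yield, for every $N\ge 0$, the bounded identity
\[
q^{\maj(\pi)+\maj(\delta)}{N-r+m\brack m}{N-s+n\brack n}=\sum_{k}\Bigl(\sum_{\substack{\alpha\in\pi\shuffle\delta\\ \des(\alpha)=k}}q^{\maj(\alpha)}\Bigr){N-k+m+n\brack m+n},
\]
and your unitriangularity observation correctly shows that these relations determine the inner sums uniquely. This is essentially Stanley's original $P$-partition route, and it is genuinely different from the paper, which never proves Theorem \ref{stanley} at all: the paper quotes it and works instead with the equivalent bijective formulation (Theorem \ref{thm:insert}, the Ji--Zhang insertion map $\Phi$, via \eqref{int-GassCoeft}), on which its refinements in Theorem \ref{thm:shuffspe} are built.

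The genuine gap is that Step 3 ends by \emph{asserting}, not proving, the convolution
\[
{N-r+m\brack m}{N-s+n\brack n}=\sum_{k}q^{(k-r)(k-s)}{m-r+s\brack k-r}{n-s+r\brack k-s}{N-k+m+n\brack m+n},
\]
and by your own uniqueness argument this identity is \emph{equivalent} to the theorem given Steps 1--2, so as written you have produced a reformulation rather than a proof: all of the content of Stanley's theorem now sits in the unverified identity. Moreover, labelling it ``a form of the $q$-Chu--Vandermonde identity'' understates what is needed: $q$-Vandermonde sums a product of two $q$-binomials into one, whereas here a product of three $q$-binomials weighted by the quadratic factor $q^{(k-r)(k-s)}$ must collapse to a product of two --- a balanced (Saalsch\"utzian) terminating ${}_3\phi_2$ evaluation, i.e., an instance of the $q$-Pfaff--Saalsch\"utz summation, or equivalently two iterated $q$-Vandermondes with careful tracking of the quadratic exponents. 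Your parenthetical that at $q=1$ this degenerates to ``the ordinary Vandermonde convolution recorded in the Remark'' is also inaccurate: the Remark's identities (coming from $t=1$ in Theorem \ref{thm:ideseq:3}) convolve two binomials into one, while your identity at $q=1$ convolves three into two and requires a double Vandermonde. To close the proof, either carry out the $q$-Pfaff--Saalsch\"utz verification explicitly, or replace Step 3 by the partition-pair bijection of Theorem \ref{thm:insert}, which is exactly how the paper encodes the content of Theorem \ref{stanley}.
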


To prove Theorem \ref{thm:ideseq:3},  it is necessary to introduce three special sets of shuffles, which are related to   the sets $\sigma \lozenge \tau $ (resp. $\sigma \vartriangle \tau$, $ \sigma \triangledown \tau$).   
Given two disjoint permutations $\pi=\pi_1\cdots \pi_m \in \mathcal{S}_m$ and $\delta=\delta_1\cdots \delta_n \in \mathcal{S}_n$, 

\begin{itemize}
\item  Let $\pi \shuffle_l \delta $ denote the set of shuffles   $\alpha=\alpha_1\cdots \alpha_{n+m}$ of  $\pi$ and $\delta$ such that $\alpha_1=\delta_1$.

\item   Let $\pi \shuffle_{ls} \delta $ denote the set of shuffles   $\alpha=\alpha_1\cdots \alpha_{n+m}$ of $\pi$ and $\delta$ such that $\alpha_1=\delta_1$ and $\alpha_{n+m}=\delta_n$.

\item Let $\pi \shuffle_{ll} \delta$ denote the set of shuffles   $\alpha=\alpha_1\cdots \alpha_{n+m}$ of $\pi$ and $\delta$ such that $\alpha_1=\delta_1$ and $\alpha_2=\delta_2$.

 \end{itemize}

 \medskip 
 
 For example, let $\pi=263$ and $\delta=14$, we have 
$263\shuffle_l 14 = \{12463, 14263,\break  12634, 12643\}$, $263\shuffle_{ls} 14 = \{12634\}$ and $263\shuffle_{ll} 14 = \{14263\}$.

\medskip

The following proposition establishes a connection between the set $\sigma \lozenge \tau $ (resp. $\sigma \vartriangle \tau$, $ \sigma \triangledown \tau$) and the three special sets of shuffles introduced above. 

\begin{Proposition}\label{idesdes} For $\sigma \in \mathfrak{S}_j$ and $\tau \in \mathfrak{S}_k$, let $\sigma^{-1}=\sigma^{-1}_1\cdots \sigma^{-1}_j$ and $\tau^{-1}=\tau^{-1}_1\cdots \tau^{-1}_k$  denote the inverses of $\sigma$ and $\tau$ respectively. Define $\pi=\sigma^{-1}_1\cdots \sigma^{-1}_j$ and $\delta=(j+1)(\tau^{-1}_1+j+1)\cdots (\tau^{-1}_k+j+1)$. 
Then
\begin{align}\label{eqn:ideseq:pfa}
\sum_{\mu \in \sigma \lozenge \tau} t^{\ides(\mu)}&=\sum_{\alpha \in \pi \shuffle_l \delta  } t^{\des(\alpha)},\\[5pt] \label{eqn:ideseq:pf2aaa}
\sum_{\mu \in \sigma \vartriangle \tau} t^{\ides(\mu)}&=\sum_{\alpha \in \pi \shuffle_{ls} \delta   } t^{\des(\alpha)},\\[5pt]
\sum_{\mu \in \sigma \triangledown \tau} t^{\ides(\mu)}&=\sum_{\alpha \in \pi \shuffle_{ll} \delta } t^{\des(\alpha)}. \label{eqn:ideseq:pf2aa}
\end{align}
\end{Proposition}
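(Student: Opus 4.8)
The plan is to prove all three identities at once by a single bijection, namely the inversion map $\mu \mapsto \mu^{-1}$ on $\mathfrak{S}_{j+k+1}$. Since $\ides(\mu)=\des(\mu^{-1})$ holds for every permutation by definition, it suffices to show that $\mu\mapsto\mu^{-1}$ restricts to a bijection from $\sigma\lozenge\tau$ onto $\pi\shuffle_l\delta$, from $\sigma\vartriangle\tau$ onto $\pi\shuffle_{ls}\delta$, and from $\sigma\triangledown\tau$ onto $\pi\shuffle_{ll}\delta$; the three generating-function identities then follow termwise. Because inversion is an involution, injectivity is automatic, so everything reduces to checking that $\mu\in\sigma\lozenge\tau$ forces $\mu^{-1}\in\pi\shuffle_l\delta$ (and conversely), together with the matching of the two extra side conditions.

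First I would record the position dictionary. Writing $\mu=\sigma'1\tau'$, the letter $1$ sits in position $j+1$, the letters of $\sigma'$ occupy positions $1,\ldots,j$, and those of $\tau'$ occupy positions $j+2,\ldots,j+k+1$. Reading $\mu^{-1}=w_1\cdots w_{j+k+1}$ with $w_m$ the position of the letter $m$ in $\mu$, one has $w_1=j+1=\delta_1$, while $w_m\le j$ exactly when $m$ is a letter of $\sigma'$ and $w_m\ge j+2$ exactly when $m$ is a letter of $\tau'$. Thus the entries of $\mu^{-1}$ that are $\le j$ and those that are $\ge j+1$ form two complementary subsequences, so $\mu^{-1}$ is automatically a shuffle of its small-value subsequence and its large-value subsequence.

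The content is to identify these two subsequences, for which I would invoke the elementary fact that standardization is order preserving. If $\operatorname{st}(\sigma')=\sigma$ and $a_1<\cdots<a_j$ are the letters of $\sigma'$, then $\sigma'$ places $a_r$ in position $\sigma^{-1}(r)$, so $w_{a_r}=\sigma^{-1}(r)$; reading the small-value entries of $\mu^{-1}$ in increasing order of index therefore yields $\sigma^{-1}(1)\cdots\sigma^{-1}(j)=\pi$. The identical computation applied to $\tau'$ shows that the large-value subsequence, which begins with $w_1=j+1$, equals $(j+1)(\tau^{-1}_1+j+1)\cdots(\tau^{-1}_k+j+1)=\delta$. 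Hence $\mu^{-1}\in\pi\shuffle_l\delta$; running the dictionary backwards gives the reverse inclusion, and with the involution property this yields the bijection and equation \eqref{eqn:ideseq:pfa}.

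Finally I would translate the two extra constraints. The condition $j+k+1\in\tau'$ defining $\sigma\vartriangle\tau$ says the largest letter lies in $\tau'$, i.e. $w_{j+k+1}\ge j+2$; being maximal in the $\tau'$-block it lands as the last entry $\delta_{k+1}$ of the large-value subsequence, so it becomes the condition $\alpha_{j+k+1}=\delta_{k+1}$ defining $\pi\shuffle_{ls}\delta$, giving \eqref{eqn:ideseq:pf2aaa}. Symmetrically, the condition $2\in\tau'$ defining $\sigma\triangledown\tau$ forces $w_2\ge j+2$, and since $2$ is the smallest letter of the $\tau'$-block it appears as $\delta_2$, translating into $\alpha_2=\delta_2$ and hence \eqref{eqn:ideseq:pf2aa}. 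I expect the only delicate point to be the careful bookkeeping of positions under inversion — keeping the central letter $1$, the shift by $j+1$, and the two boundary letters $2$ and $j+k+1$ straight — but no genuinely hard step arises, since everything rests on the order-preserving nature of standardization.
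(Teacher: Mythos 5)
Your proposal is correct and is essentially the paper's own proof: both apply the inversion map $\mu\mapsto\mu^{-1}$, identify the subsequence of $\mu^{-1}$ with values at most $j$ as $\pi=\sigma^{-1}$ and the subsequence with values at least $j+1$ (beginning with $\mu^{-1}_1=j+1$) as $\delta$, and translate the side conditions $j+k+1\in\tau'$ and $2\in\tau'$ into $\alpha_{j+k+1}=\delta_{k+1}$ and $\alpha_{2}=\delta_{2}$, respectively. Your standardization bookkeeping is, if anything, slightly more explicit than the paper's, which states the identification of the two subsequences directly and notes reversibility.
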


\begin{proof} Let $\mu=\sigma'  1  \tau' \in   \sigma \lozenge \tau$. By definition,  we see that
$\mu \in \mathfrak{S}_{j+k+1}$. Thus, we may write $\mu=\mu_1\cdots \mu_{j+k+1}$	 
 as a permutation of the set  $\{1,2,\ldots, j+k+1\}$. Note that $\mu_{j+1}=1$ and $\sigma' \cup \tau'=\{2,3, \ldots, j+k+1\}$.  Let 
 the elements of  $\sigma'$ be $i_1<i_2< \cdots< i_j$  and let those of  $\tau'$ be $j_1<j_2<\cdots<j_k$. 
 
 We now consider the inverse of $\mu$. Assume that $\mu^{-1}=\mu^{-1}_1\cdots \mu^{-1}_{j+k+1}$. It is clear to see  that   $\mu_1^{-1}=j+1$ and 
 \[\pi={\sigma}^{-1}=\mu_{i_1}^{-1}\mu_{i_2}^{-1}\cdots \mu_{i_{j}}^{-1}\]
 and 
 \[\delta:=(j+1)({\tau}^{-1}+{j+1})=\mu^{-1}_1\mu_{j_1}^{-1}\mu_{j_2}^{-1}\cdots \mu_{j_{k}}^{-1}.\] 
 Let $\alpha=\mu^{-1}$, 
Clearly,  $\alpha \in \pi \shuffle_l \delta$ and $\ides(\mu)=\des(\mu^{-1})=\des(\alpha)$. Moreover, this process is reversible. Thus, we prove \eqref{eqn:ideseq:pfa}.

For \eqref{eqn:ideseq:pf2aaa}, suppose that  $\mu=\sigma'  1  \tau' \in \sigma\vartriangle\tau$. By definition,  $j+k+1 \in \tau'$, so  $\mu^{-1}_{j+k+1} = \tau^{-1}_{k}+{j+1}=\delta_{k+1}$. Thus,   $\alpha \in \pi \shuffle_{ls} \delta$, establishing  \eqref{eqn:ideseq:pf2aaa}. 

Finally, if   $\mu=\sigma'  1  \tau' \in \sigma\triangledown \tau$, then $2 \in \tau'$, which implies   $\mu_{2}^{-1}= \tau^{-1}_1+{j+1}=\delta_2$. Hence,  $\alpha \in \pi \shuffle_{ll} \delta$, proving \eqref{eqn:ideseq:pf2aa}.  
\end{proof}

For example, given $
\sigma=3 5 1 2 4$ and   $\tau=2 3 1$, we see that $j=5$, $
\sigma^{-1}=3 4 1 5 2$ and $\tau^{-1}=3 1 2
$. Thus,  $\pi=3 4 1 5 2$ and $\delta=6978$. 
 
 (a) For $\mu=6 9 2 5 8 1 4 7 3 \in \sigma \lozenge\tau$, we see that $\mu^{-1}=6 3 9 7 4 1 8 5 2 \in \pi \shuffle_{l} \delta$.

  (b)  For $\mu=6 8 2 5 7 1 4 9 3 \in \sigma \vartriangle \tau$, we see that  
$\mu^{-1}=6 3 9 7 4 1 5 2 8 \in  \pi \shuffle_{ls} \delta$.

  (c)  For $\mu=6 9 3 5 8 1 4 7 2 \in \sigma \triangledown \tau$, we see that 
$\mu^{-1}=6 9 3 7 4 1 8 5 2 \in \pi \shuffle_{ll} \delta$.
 
\medskip 

With Proposition \ref{idesdes} at our disposal,   the proof of Theorem \ref{thm:ideseq:3}   comes down to establishing the following assertion.

\begin{Theorem} \label{thm:shuffspecor} Assume that  $\pi \in \mathcal{S}_j$ and $\delta \in \mathcal{S}_{k+1}$  are two  disjoint permutations, where $\des(\pi) = j'$ and $\des(\delta) = k'$. Moreover, $\delta_1<\delta_2$ and all of the elements of $\delta$ are larger than the elements of $\pi$. Then 
\begin{align*}  
\sum_{\alpha \in \pi \shuffle_l \delta  } t^{\des(\alpha)}&=\sum_{i\geq 1} \binom{k+1-k'+j'}{i+j'-k'} \binom{j-j'+k'-1}{i-1} t^{i+j'},\\[5pt]
\sum_{\alpha \in \pi \shuffle_{ls} \delta   } t^{\des(\alpha)}&=\sum_{i\geq 1} \binom{k-k'+j'}{i+j'-k'} \binom{j-j'+k'-1}{i-1} t^{i+j'},  \\[5pt]
\sum_{\alpha \in \pi \shuffle_{ll} \delta } t^{\des(\alpha)}&=\sum_{i\geq 1} \binom{k-k'+j'}{i+j'-k'} \binom{j-j'+k'-1}{i-1} t^{i+j'}. 
\end{align*} 

\end{Theorem}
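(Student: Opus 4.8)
The plan is to prove the three identities of Theorem~\ref{thm:shuffspecor} simultaneously by reinterpreting the three restricted shuffle sets as shuffles of a dominant alphabet with a prescribed boundary pattern, and then counting such shuffles by descent number through a run decomposition, using Stanley's shuffle theorem (Theorem~\ref{stanley}) only as a guide and consistency check.

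The first step is an elementary but decisive observation. Since every letter of $\delta$ exceeds every letter of $\pi$, the $i$-th large letter read off in \emph{any} $\alpha\in\pi\shuffle\delta$ is always $\delta_i$. Hence $\alpha_1=\delta_1$ is equivalent to ``$\alpha$ begins with a large letter''; combined with $\delta_1<\delta_2$, the condition $\alpha_1=\delta_1,\ \alpha_2=\delta_2$ is equivalent to ``$\alpha$ begins with two large letters''; and $\alpha_1=\delta_1,\ \alpha_{j+k+1}=\delta_{k+1}$ is equivalent to ``$\alpha$ begins and ends with a large letter''. Thus
\[
\pi\shuffle_l\delta,\qquad \pi\shuffle_{ls}\delta,\qquad \pi\shuffle_{ll}\delta
\]
are exactly the dominant shuffles whose small/large pattern is prescribed at position $1$; at positions $1$ and $j+k+1$; and at positions $1$ and $2$, respectively. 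This reduces the theorem to counting dominant shuffles with a fixed descent number $d$ subject to such a boundary condition.

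The second step sets up the run decomposition. Because smalls lie below larges, each maximal increasing run of $\alpha$ splits uniquely as an increasing block of small letters followed by an increasing block of large letters. Writing $\alpha=R_1\cdots R_{d+1}$ with $d=\des(\alpha)$, the small blocks concatenate to $\pi$ and the large blocks to $\delta$, each block increasing. The data of $\alpha$ therefore amounts to (i) a decomposition of $\pi$ into increasing pieces whose cut set contains every descent of $\pi$, (ii) an analogous decomposition of $\delta$, and (iii) an interleaving of these pieces into $d+1$ runs. The decompositions in (i)--(ii) are counted by binomials: cutting $\pi$ into $a$ increasing pieces contributes $\binom{j-1-j'}{a-1-j'}$ and cutting $\delta$ into $b$ pieces contributes $\binom{k-k'}{b-1-k'}$, since beyond the $j'$ (resp.\ $k'$) forced descent cuts one freely chooses the extra cuts among the ascents. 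The descent number is then read from the interleaving: an $S$-block merges with a following $L$-block inside one run precisely at the ascending $S\to L$ junctions, so the number of runs—hence $d$—is governed by the interleaving, while the boundary pattern from step one fixes the type of the first run (and, for $\shuffle_{ls}$, the last run).

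The main obstacle is the coupling at run boundaries, which must be resolved for the count to factor as a clean product of binomials. Each inter-run boundary is forced to be a genuine descent: this forbids a small-only run from being immediately followed by a large-only run, and it requires every $P$–$P$ (resp.\ $Q$–$Q$) junction to sit at a genuine descent of $\pi$ (resp.\ $\delta$), whereas extra cuts at ascents are admissible only when a piece of the other alphabet is inserted at that junction. Carrying out this bookkeeping, together with the boundary constraints of step one, yields the stated products: the start-large condition selects, via Pascal's rule, the summand $\binom{j-j'+k'-1}{d-j'-1}$ out of $\binom{j-j'+k'}{d-j'}$ in all three cases, while the additional end-large (for $\shuffle_{ls}$) or second-large (for $\shuffle_{ll}$) condition lowers the parameter $k+1$ to $k$ in the other binomial; with $d=i+j'$ these are exactly $\binom{k+1-k'+j'}{i+j'-k'}\binom{j-j'+k'-1}{i-1}$ and $\binom{k-k'+j'}{i+j'-k'}\binom{j-j'+k'-1}{i-1}$. (The coincidence of the $\shuffle_{ls}$ and $\shuffle_{ll}$ formulas can alternatively be explained by exhibiting a descent-preserving bijection between start-and-end-large shuffles and first-two-large shuffles.) As a built-in check, summing the boundary cases—first letter small versus large, and last letter small versus large—collapses by Pascal's rule to the unrestricted count $\binom{j-j'+k'}{d-j'}\binom{k+1-k'+j'}{d-k'}$ given by Theorem~\ref{stanley} at $q=1$; equivalently, the $t=1$ specialization is the Chu–Vandermonde convolution recorded in the Remark after Theorem~\ref{thm:ideseq:3}, which pins down the constants in the binomial arguments.
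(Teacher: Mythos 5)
Your reduction in steps one and two is sound and genuinely different from the paper's route: by dominance the large letters of any shuffle appear in $\delta$'s order, so $\pi\shuffle_l\delta$, $\pi\shuffle_{ls}\delta$, $\pi\shuffle_{ll}\delta$ are exactly the shuffles whose small/large pattern is ``large first'', ``large first and last'', ``large first two''; and every shuffle with $d$ descents does decompose into $d+1$ runs of the form (increasing small block)(increasing large block), with the cut-counts $\binom{j-1-j'}{a-1-j'}$ and $\binom{k-k'}{b-1-k'}$ as you state. The genuine gap is that the sentence ``Carrying out this bookkeeping, together with the boundary constraints of step one, yields the stated products'' \emph{is} the theorem. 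As you yourself observe, the three choices (cuts of $\pi$, cuts of $\delta$, interleaving) are coupled: an extra cut at an ascent of $\pi$ is admissible only when a nonempty $\delta$-piece is inserted at that junction, a small-only run may not immediately precede a large-only run, and same-alphabet junctions must sit at genuine descents. Because of this coupling the count does not factor over the three choices, and summing over the numbers of pieces $a,b$ does not visibly collapse to a single product of two binomials; an additional idea --- an explicit bijection to pairs of subsets, a transfer-matrix computation, or an induction on length --- is required and is not supplied.

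The two devices you invoke to close the gap do not close it. Pascal's rule shows that the unrestricted total $\binom{j-j'+k'}{d-j'}\binom{k+1-k'+j'}{d-k'}$ from Theorem \ref{stanley} at $q=1$ \emph{splits} as the sum of the claimed ``first letter large'' and ``first letter small'' counts, but it cannot tell you which summand belongs to which case: many refinements share the same total, so consistency with Stanley's theorem (or with the Chu--Vandermonde identity at $t=1$) verifies nothing about the refined distribution. Likewise the descent-preserving bijection between $\pi\shuffle_{ls}\delta$ and $\pi\shuffle_{ll}\delta$ is asserted rather than constructed (note such a bijection cannot preserve $\maj$: the paper's $q$-analogues, parts (2) and (3) of Theorem \ref{thm:shuffspe}, differ by the factor $q^{\,d-r}$). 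For comparison, the paper proves the stronger $q$-refinements by running each restricted shuffle through the Ji--Zhang bijection $\Phi$ of Theorem \ref{thm:insert} and showing, via Propositions \ref{prop:insert} and \ref{prop:insertb}, that the three boundary conditions translate exactly into the strict partition constraints $\mu_1<d-s$, $\lambda_1<m$, and $\lambda_{d-r}>d-s$, after which Theorem \ref{thm:shuffspecor} follows by letting $q\to 1$. Your run-decomposition route, if its interleaving count were actually executed, would yield a more elementary $t$-level proof, but as written its combinatorial core is missing.
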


\medskip

The following assertion can be viewed as  refinements of Stanley's shuffle theorem, from which Theorem \ref{thm:shuffspecor}   follows immediately by  letting $q\rightarrow 1$. 

\begin{Theorem} \label{thm:shuffspe} Assume that $\delta \in \mathcal{S}_m$ and  $\pi \in \mathcal{S}_n$  are two disjoint   permutations, where $\des(\delta) = r$ and $\des(\pi) = s$. Moreover, $\delta_1<\delta_2$ and all of the elements of $\delta$ are larger than the elements of $\pi$. Then  
\begin{align*}
  (1)&   \sum_{\substack{\alpha \in \pi \shuffle_l \delta  \\ \mathrm{des}(\alpha) = d}} q^{\mathrm{maj}(\alpha)}= {m-r+s \brack d-r} {n-s+r-1\brack d-s-1} \times q^{\mathrm{maj}(\delta) + \mathrm{maj}(\pi)  + (d - s)(d - r)}, \\[5pt]
    (2)&  \sum_{\substack{\alpha \in \pi \shuffle_{ls} \delta  \\ \mathrm{des}(\alpha) = d}} q^{\mathrm{maj}(\alpha)}= {m-r+s-1 \brack d-r} {n-s+r-1\brack d-s-1} \times q^{\mathrm{maj}(\delta) + \mathrm{maj}(\pi)  + (d - s)(d - r)}, \\[5pt]   
   (3) &   \sum_{\substack{\alpha \in \pi \shuffle_{ll} \delta  \\ \mathrm{des}(\alpha) = d}} q^{\mathrm{maj}(\alpha)}= {m-r+s-1 \brack d-r} {n-s+r-1\brack d-s-1} \times q^{\mathrm{maj}(\delta) + \mathrm{maj}(\pi)  + (d- s+1)(d - r)}.   
\end{align*}
\end{Theorem}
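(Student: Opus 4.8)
The three identities are refinements of Stanley's Shuffle Theorem (Theorem~\ref{stanley}): each restricts the shuffle set $\pi\shuffle\delta$ by prescribing the location of one or two of the distinguished letters $\delta_1,\delta_2,\delta_m$, and correspondingly shaves a unit off one of the Gaussian factors (and, in case~(3), shifts the power of $q$). My plan is to prove all three by adapting the bijective proof of Stanley's theorem due to Ji and Zhang~\cite{Ji-Zhang-2024}, whose bijection encodes a shuffle $\alpha$ by the pair $(\pi,\delta)$ together with data recording how the two words are interleaved: the coefficient ${m-r+s\brack d-r}$ measures the interleaving freedom on the $\delta$-side (length $m$, $\des=r$), the coefficient ${n-s+r\brack d-s}$ that on the $\pi$-side, and the monomial $q^{(d-s)(d-r)}$ is the correction from the $(d-r)\times(d-s)$ block of forced inversions. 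The standing hypotheses $\delta_1<\delta_2$ and $\min(\delta)>\max(\pi)$ are what make the restrictions legible: the first letter of any $\alpha\in\pi\shuffle\delta$ is either $\pi_1$ or $\delta_1$, the last is either $\pi_n$ or $\delta_m$, and every comparison between a boundary letter and its neighbour is decided purely by whether that neighbour comes from $\pi$ or from $\delta$.

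The key computation is the boundary bookkeeping, which I would carry out position by position. For $\shuffle_l$, writing $\alpha=\delta_1\beta$ with $\beta\in\pi\shuffle\delta'$, $\delta'=\delta_2\cdots\delta_m$, one has $\des(\alpha)=\des(\beta)+\varepsilon_1$ and $\maj(\alpha)=\maj(\beta)+\des(\beta)+\varepsilon_1$, where $\varepsilon_1=[\beta_1=\pi_1]$ records whether position~$1$ is a descent. For $\shuffle_{ls}$, writing $\alpha=\delta_1\beta\delta_m$ with $\beta\in\pi\shuffle(\delta_2\cdots\delta_{m-1})$, a second boundary event appears at position $N=n+m$: its predecessor is a descent exactly when $\beta$ ends with $\delta_{m-1}$ and $m-1\in\Des(\delta)$. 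For $\shuffle_{ll}$, prescribing $\alpha_1\alpha_2=\delta_1\delta_2$ forces a descent at position~$1$ (since $\delta_1>\pi$-letters but $\delta_1<\delta_2$ means the descent is instead created relative to the interior), which is the source of the extra factor in the exponent $(d-s+1)(d-r)$. In each case I would check that, under the Ji--Zhang encoding, fixing a boundary letter deletes exactly one elementary interleaving move at the corresponding end of the word: this is what turns ${m-r+s\brack d-r}$ into ${m-r+s-1\brack d-r}$ and ${n-s+r\brack d-s}$ into ${n-s+r-1\brack d-s-1}$, while the induced shift of $\maj$ reproduces the stated power of $q$.

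As an independent consistency check — and a way to reduce the number of boundary analyses I must perform from scratch — I would exploit the $q$-Pascal recurrence
\[
{m-r+s\brack d-r}={m-r+s-1\brack d-r}+q^{\,m+s-d}{m-r+s-1\brack d-r-1},
\]
which matches the disjoint decomposition $\pi\shuffle_l\delta=\bigl(\pi\shuffle_{ll}\delta\bigr)\sqcup\{\delta_1\pi_1\gamma\}$ obtained by testing whether the second letter of a shuffle beginning with $\delta_1$ is $\delta_2$ or $\pi_1$. The first block is governed by~(3) and the second, after peeling off the prefix $\delta_1\pi_1$, is an unrestricted shuffle to which Theorem~\ref{stanley} applies directly. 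Reconciling the two descriptions shows that~(2) and~(3) share the same Gaussian product while~(1) carries the extra $q$-Pascal summand, so that verifying any one of the three identities from the bijection pins down the other two; I would also check the $q\to1$ specialization against the plain Vandermonde count $|\pi\shuffle_l\delta|=\binom{m+n-1}{n}$ as a numerical safeguard.

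The hard part will be the boundary bookkeeping itself: a single application of Theorem~\ref{stanley} cannot suffice, because whether positions $1$, $2$ and $N$ are descents of $\alpha$ is not independent of the interior descent count $\des(\beta)$, so one cannot simply factor the generating function. The crux is to confirm that prescribing a boundary letter restricts the bijective image to a sub-object enumerated by precisely the claimed truncated Gaussian coefficient, \emph{and} that the accompanying change in $\maj$ equals the stated exponent correction on the nose — the delicate instance being the passage to $(d-s+1)(d-r)$ in case~(3). Once this is established for the opening letter, the analogous arguments for the closing letter (case~(2)) and for the second letter (case~(3)) follow by the same mechanism. Combining the three identities with Proposition~\ref{idesdes} then yields Theorem~\ref{thm:shuffspecor} upon letting $q\to1$, and hence Theorem~\ref{thm:ideseq:3}.
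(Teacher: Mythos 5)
You have correctly identified the paper's framework---push the restricted shuffle sets through the Ji--Zhang bijection $\Phi$ of Theorem~\ref{thm:insert} and show that pinning a boundary letter truncates the partition data---but your proposal defers precisely the step that constitutes the entire proof. What must be established, and what you never state, is the exact partition-level characterization: writing $(\lambda,\mu)=\Phi(\alpha)$ with $\des(\alpha)=d$, one has $\alpha\in\pi\shuffle_l\delta$ if and only if $\mu_1\le d-s-1$; $\alpha\in\pi\shuffle_{ls}\delta$ if and only if in addition $\lambda_1\le m-1$; and $\alpha\in\pi\shuffle_{ll}\delta$ if and only if in addition $\lambda_{d-r}\ge d-s+1$. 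Identity~\eqref{int-GassCoeft} then converts these truncated partition sets into the stated Gaussian products; in particular the exponent $(d-s+1)(d-r)$ in (3) is simply the prefactor $q^{nt}$ attached to the raised lower bound $t=d-s+1$, \emph{not}, as you suggest, a ``forced descent at position~$1$''---since $\delta_1<\delta_2$, position~$1$ of any $\alpha\in\pi\shuffle_{ll}\delta$ is an ascent. Proving the characterization requires both directions, neither of which your sketch supplies: the forward direction runs through the major-increment machinery (Propositions~\ref{prop:insert} and~\ref{prop:insertb}) with a case split on whether removing $\pi_1$ (resp.\ $\pi_n$, $\pi_{i_1}$) lowers the descent number; the converse is a contradiction argument showing that if the relevant boundary letter of $\overline{\alpha}=\Phi^{-1}(\lambda,\mu)$ came from $\pi$, the extreme partition entry would land exactly on the excluded value ($\mu_1=d-s$, $\lambda_1=m$, or $\lambda_{d-r}=d-s$). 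Your phrase ``fixing a boundary letter deletes exactly one elementary interleaving move'' is the assertion to be proved, not an argument for it.

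A second concrete flaw: your consistency check does not close. In the decomposition of $\pi\shuffle_l\delta$ according to the second letter, the complementary block is not ``an unrestricted shuffle to which Theorem~\ref{stanley} applies directly'': after peeling $\delta_1\pi_1$ you are counting shuffles of $\pi$ with $\delta_2\cdots\delta_m$ constrained to open with $\pi_1$, and whether position~$2$ of $\alpha$ is a descent depends on whether the remainder opens with $\pi_2$ and $\pi_1>\pi_2$. So the check itself needs yet another boundary-restricted shuffle formula (shuffles opening with a letter of the \emph{smaller} word), which is not among (1)--(3) and is no easier than what it is meant to verify. Your local bookkeeping for $\shuffle_l$ and $\shuffle_{ls}$ (the $\varepsilon_1$ observation and the end-of-word criterion) is accurate but, as you yourself note, the boundary events are not independent of the interior descent count, so this recursion cannot by itself produce the truncated Gaussian coefficients; the paper's proof never peels letters off $\alpha$, and instead analyzes the whole word inside the bijection $\Phi$.
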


We conclude this section with a proof of the theorem. It turns out that the bijection used to establish Stanley's shuffle theorem by the second author and Zhang \cite{Ji-Zhang-2024} plays a crucial role. Let $\mathcal{P}_n(t,m)$ denote the set of partitions $\lambda=(\lambda_1,\ldots, \lambda_n)$ such that $\lambda_n \geq t$ and $\lambda_1 \leq m$. We have 
\begin{equation}\label{int-GassCoeft}
q^{nt}{n+m-t \brack n}=\sum_{\lambda \in \mathcal{P}_n(t,m)} q^{|\lambda|}.
\end{equation}

Thus,  Stanley's shuffle theorem \ref{stanley} is equivalent to the following statement. 

\begin{Theorem}\cite[Theorem 3.1]{Ji-Zhang-2024} \label{thm:insert}
Assume that $\delta \in \mathcal{S}_m$ and $\pi \in \mathcal{S}_n$ are two disjoint permutations, where $\des(\delta) = r$ and $\des(\pi) = s$. Let $\mathfrak{S}(\delta, \pi \vert d)$ denote the set of all shuffles of $\delta$ and $\pi$ with $d$ descents. Then there is a bijection $\Phi$ between $\mathfrak{S}(\delta, \pi \vert d)$ and $\mathcal{P}_{d-r}(d-s, m) \times \mathcal{P}_{n-d+r}(0, d-s)$, namely, for $\alpha \in \mathfrak{S}(\delta, \pi \vert d)$, we have $(\lambda, \mu) = \Phi(\alpha) \in \mathcal{P}_{d-r}(d-s, m) \times \mathcal{P}_{n-d+r}(0, d-s)$ such that
\[
\maj(\alpha) = \vert \lambda \vert + \vert \mu \vert + \maj(\delta) + \maj(\pi). 
\]
\end{Theorem}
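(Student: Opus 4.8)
The plan is to recognize Theorem~\ref{thm:insert} as the partition reformulation of Stanley's Shuffle Theorem~\ref{stanley}, and to prove it by showing that the two $q$-generating functions attached to $\mathfrak{S}(\delta,\pi\vert d)$ and to $\mathcal{P}_{d-r}(d-s,m)\times\mathcal{P}_{n-d+r}(0,d-s)$ coincide. Concretely, I would first evaluate the generating function of the partition side using the elementary identity~\eqref{int-GassCoeft}, then match it term-for-term against the right-hand side of Theorem~\ref{stanley}, and finally upgrade the resulting numerical identity to a genuine weight-preserving bijection $\Phi$.

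For the evaluation, I apply \eqref{int-GassCoeft} to each factor separately. Taking $(n,t,m)\mapsto(d-r,\,d-s,\,m)$ for the first factor and $(n,t,m)\mapsto(n-d+r,\,0,\,d-s)$ for the second (and using the symmetry ${a\brack b}={a\brack a-b}$), I get
\[
\sum_{\lambda\in\mathcal{P}_{d-r}(d-s,m)}q^{|\lambda|}=q^{(d-r)(d-s)}{m-r+s\brack d-r},
\qquad
\sum_{\mu\in\mathcal{P}_{n-d+r}(0,d-s)}q^{|\mu|}={n-s+r\brack d-s}.
\]
Multiplying these and inserting the constant $q^{\maj(\delta)+\maj(\pi)}$ yields
\[
q^{\maj(\delta)+\maj(\pi)+(d-s)(d-r)}{m-r+s\brack d-r}{n-s+r\brack d-s},
\]
which is exactly the right-hand side of Theorem~\ref{stanley} applied to the disjoint pair $(\delta,\pi)$ with $\des(\delta)=r$, $\des(\pi)=s$, and descent parameter $k=d$. (Here one must substitute carefully, since the names of the two permutations and of the parameters $m,n,r,s$ are interchanged between the two statements.) Comparing with the left-hand side of Theorem~\ref{stanley} gives the target identity of generating functions,
\[
\sum_{\alpha\in\mathfrak{S}(\delta,\pi\vert d)}q^{\maj(\alpha)}
=\sum_{(\lambda,\mu)}q^{|\lambda|+|\mu|+\maj(\delta)+\maj(\pi)},
\]
the inner sum ranging over $\mathcal{P}_{d-r}(d-s,m)\times\mathcal{P}_{n-d+r}(0,d-s)$.

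To finish, I note that both sides are polynomials in $q$ with non-negative integer coefficients, so equality of the polynomials forces, for every integer $j$, the equality of cardinalities $\#\{\alpha:\maj(\alpha)=j\}=\#\{(\lambda,\mu):|\lambda|+|\mu|+\maj(\delta)+\maj(\pi)=j\}$. Selecting any bijection within each fixed-weight class and taking their union produces a bijection $\Phi$ with $\maj(\alpha)=|\lambda|+|\mu|+\maj(\delta)+\maj(\pi)$, which is precisely the assertion.

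Given Stanley's theorem, the worded (existence) statement is therefore almost immediate; the genuine difficulty — and the step I expect to be the main obstacle — is that the counting argument produces $\Phi$ only up to arbitrary choices inside each weight class, whereas the downstream refinements (Theorem~\ref{thm:shuffspe}), which restrict $\alpha$ by positional conditions such as $\alpha_1=\delta_1$, require an explicit, structure-respecting map. I would build such a $\Phi$ constructively by inserting the letters of $\pi$ into $\delta$ one at a time, recording at each insertion the induced increment of the major index as a new part and distributing these parts into $\lambda$ or $\mu$ according to whether the insertion creates a descent. The hard part is the insertion bookkeeping: one must verify that a single-letter insertion moves $(\des,\maj)$ through exactly the controlled range required, and that the accumulated records always assemble into partitions of the prescribed shapes $\mathcal{P}_{d-r}(d-s,m)$ and $\mathcal{P}_{n-d+r}(0,d-s)$ — this is where the combinatorial content of the Ji--Zhang bijection resides.
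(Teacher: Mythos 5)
Your proposal is correct and takes essentially the same route as the paper: the paper likewise presents Theorem \ref{thm:insert} as the partition reformulation of Stanley's Shuffle Theorem via the identity \eqref{int-GassCoeft} (your Gaussian-binomial computation, including the careful swap of parameter names, is exactly this equivalence), and the explicit map you sketch --- inserting the letters of $\pi$ one at a time, recording the major-index increments as parts, and routing them into $\lambda$ or $\mu$ according to whether a descent is created --- is precisely the Ji--Zhang bijection $\Phi$ that the paper quotes, with the ``insertion bookkeeping'' you flag as the hard step carried by Propositions \ref{prop:insert} and \ref{prop:insertb}. You also rightly note that the bare coefficient-matching argument already yields the literal existence statement, but that the explicit structure-respecting $\Phi$ is what the downstream refinements in Theorem \ref{thm:shuffspe} actually require.
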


The following map is a desired bijection  in Theorem \ref{thm:insert}, see \cite[Lemma 3.5 and Lemma 3.7]{Ji-Zhang-2024}. 

\begin{Definition} [The map $\Phi$] Let $\delta = \delta_1 \cdots \delta_m$ be a permutation with $r$ descents and let $\pi = \pi_1 \cdots \pi_n$ be a permutation with $s$ descents. Assume that $\alpha = \alpha_1 \cdots \alpha_{m+n}$ is the shuffle of $\delta$ and $\pi$ with $d$ descents. The pair of partitions $(\lambda, \mu) = \Phi(\alpha)$ can be constructed as follows: Let $\alpha^{(i)}$ denote the permutation obtained by removing $\pi_1, \pi_2, \ldots, \pi_i$ from $\alpha$. Obviously, $\alpha^{(n)} = \delta$. Here we assume that $\alpha^{(0)} = \alpha$. For $1 \leq i \leq n$, define
\[
t(i) = \maj(\alpha^{(i-1)}) - \maj(\alpha^{(i)}) - d_i(\pi),
\]
where $d_i(\pi)$ denotes the number of descents in $\pi$ greater than or equal to $i$. 

Since there are $d$ descents in $\alpha$ and there are $r$ descents in $\delta$, it follows that there exists $d-r$ permutations in $\alpha^{(1)}, \ldots, \alpha^{(n)}$, denoted by $\alpha^{(i_1)}, \ldots, \alpha^{(i_{d-r})}$ where $1 \leq i_1 < i_2 < \cdots < i_{d-r} \leq n$, such that $\des(\alpha^{(i_{l}-1)}) = \des(\alpha^{(i_l)}) + 1$ for $1 \leq l \leq d-r$. Let $\{j_1, \ldots, j_{n-d+r}\} \in \{1, \ldots, n\} \setminus \{i_1, i_2, \ldots, i_{d-r}\}$, where $1 \leq j_1 < j_2 < \cdots < j_{n-d+r} \leq n$. Then $\des(\alpha^{(j_{l}-1)}) = \des(\alpha^{(j_l)})$ for $1 \leq l \leq n-d+r$. The pair of partitions $(\lambda, \mu) = \Phi(\alpha)$ is defined by
\[
\lambda = \bigl(t(i_{d-r}), t(i_{d-r- 1}), \ldots, t(i_1)\bigr),  
\]
and
\[
\mu = \bigl(t(j_1), t(j_2), \ldots, t(j_{n-d+r})\bigr).  
\]
More precisely,   
 \[
m \geq t(i_{d-r}) \geq \cdots \geq t(i_1) \geq d-s \geq  t(j_1) \geq \cdots \geq t(j_{n-d+r}) \geq 0.  
\]
\end{Definition}

\medskip

Let $\sigma = \sigma_1 \cdots \sigma_n \in \mathcal{S}_n$ and $r \notin \sigma$. Recall that $\sigma^{(i)}(r)$ denotes the permutation obtained by inserting $r$ before $\sigma_{i+1}$ (or after $\sigma_i$ if $i = n$). For $0 \leq i \leq n$, define the major increment
\[
\operatorname{im}(\sigma, i, r) = \maj(\sigma^{(i)}(r)) - \maj(\sigma)
\]
and the major increment sequence
\[
\MIS(\sigma, r) = (\operatorname{im}(\sigma, 0, r), \ldots, \operatorname{im}(\sigma, n, r)).
\]
The first $i$ elements of $\MIS(\sigma,r)$ is defined by 
\[\MIS_i(\sigma,r)=({\rm im}(\sigma,0,r),\ldots,{\rm im}(\sigma,i-1,r)). 
\]

The proof of Theorem \ref{thm:insert} relies on the following two propositions, which are also essential for proving Theorem  \ref{thm:shuffspe}.

\begin{Proposition}\cite[Corollary 3.3]{Ji-Zhang-2024} \label{prop:insert}
Let $\sigma \in \mathcal{S}_n$ with $k$ descents and $r \notin \sigma$. Then 
$\MIS(\sigma, r)$ is a shuffling of $k + 1, k + 2, \ldots, n$ and $k, \ldots, 1, 0$. In particular,  if $\des(\sigma^{(i)}(r)) = \des(\sigma) + 1$, then
\[
\operatorname{im}(\sigma, i, r) = \max\{\operatorname{im}(\sigma, 0, r), \ldots, \operatorname{im}(\sigma, i - 1, r)\} + 1,
\]
otherwise,
\[
\operatorname{im}(\sigma, i, r) = \min\{\operatorname{im}(\sigma, 0, r), \ldots, \operatorname{im}(\sigma, i - 1, r)\} - 1.
\]
\end{Proposition}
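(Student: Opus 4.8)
The plan is to analyze, for a single insertion slot $i$, exactly how the major index changes, and then to recover the global structure of the whole sequence $\MIS(\sigma,r)$ by an induction on $i$. First I would compute $\operatorname{im}(\sigma,i,r)=\maj(\sigma^{(i)}(r))-\maj(\sigma)$ directly. Inserting $r$ into the slot between $\sigma_i$ and $\sigma_{i+1}$ leaves every descent of $\sigma$ at a position $<i$ untouched, pushes every descent at a position $>i$ one step to the right, and replaces the lone (possibly absent) descent at the junction $\sigma_i\sigma_{i+1}$ by whatever descents are formed among the three entries $\sigma_i,r,\sigma_{i+1}$. Writing $c(i)$ for the number of descents of $\sigma$ strictly after position $i$, the shifted descents contribute exactly $c(i)$ to the increment, and a six-fold case check on the local pattern (junction an ascent or a descent; $r$ below, between, or above its two neighbours, with the evident modifications at the two boundary slots) gives a closed form for $\operatorname{im}(\sigma,i,r)$. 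Two facts drop out and drive everything else: $\des(\sigma^{(i)}(r))-\des(\sigma)\in\{0,1\}$, so each slot is unambiguously \emph{descent-creating} or \emph{non-creating}; and a non-creating slot always yields $\operatorname{im}(\sigma,i,r)\in\{c(i),c(i)+1\}\le k$, whereas a creating slot always yields a value $\ge k+1$.

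Next I would pin down the number of slots of each kind, namely that exactly $n-k$ are creating and $k+1$ are non-creating, independently of the rank of $r$. I would prove this by a level-crossing argument: pad $\sigma$ with sentinels $\sigma_0=+\infty$ and $\sigma_{n+1}=-\infty$ so that every slot $0\le i\le n$ becomes a genuine junction of $\bar\sigma$, note that $\bar\sigma$ then has $k+2$ descent junctions and $n-k-1$ ascent junctions, and observe that a slot is non-creating precisely when the junction is an ascent with $r$ between its endpoints or a descent with $r$ outside them. Since $\bar\sigma$ starts above the level $r$ (at $+\infty$) and ends below it (at $-\infty$), there is exactly one more downward crossing than upward crossing; with $u_\times$ and $d_\times$ the numbers of upward and downward crossings, the non-creating junctions number $u_\times+\bigl((k+2)-d_\times\bigr)=k+1$ after the cancellation $d_\times=u_\times+1$, which gives the count.

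The heart of the proof, and the step I expect to be hardest, is to show the values appear in the right order. Here I would argue by induction on $i$ that the set $S_i=\{\operatorname{im}(\sigma,j,r):0\le j\le i\}$ is always a block of consecutive integers, that a creating slot contributes $\max S_{i-1}+1$ (extending the block at the top) and a non-creating slot contributes $\min S_{i-1}-1$ (extending it at the bottom) -- this is exactly the ``in particular'' assertion -- with the base slot $i=0$ giving $k+1$ or $k$ according to whether it is creating. The difficulty is that the per-slot closed form carries the position index $i$, so to see that each new value is \emph{adjacent} to the current block, rather than merely monotone, one must match the local increment against the running extremes; concretely I would verify the closed forms $\max S_{i-1}=k+\#\{\text{creating slots}\le i-1\}$ and $\min S_{i-1}=(k+1)-\#\{\text{non-creating slots}\le i-1\}$, which amounts to comparing the prefix count of creating slots with the prefix count of ascents of $\sigma$.

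Once this induction is in place, distinctness of the $\operatorname{im}$-values, the displayed $\max/\min$ recursion, and the monotonicity of the two subsequences all follow simultaneously. Combining it with the $(n-k,\,k+1)$ count forces the final block to be exactly $\{0,1,\dots,n\}$: starting from a value in $\{k,k+1\}$ and extending upward $n-k$ times and downward $k+1$ times lands the block at $[0,n]$. Consequently the creating slots read off $k+1<k+2<\cdots<n$ in slot order and the non-creating slots read off $k>k-1>\cdots>0$, which is precisely the assertion that $\MIS(\sigma,r)$ is a shuffle of $k+1,\dots,n$ with $k,\dots,1,0$.
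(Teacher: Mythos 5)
Your argument is correct, but there is no internal proof to compare it with: the paper states Proposition~\ref{prop:insert} purely as a citation of \cite[Corollary 3.3]{Ji-Zhang-2024} and never proves it, so your write-up is a self-contained alternative to the cited source rather than a variant of anything in this paper. Your sketch does close, with two small points worth making explicit. First, the bound $\operatorname{im}(\sigma,i,r)\in\{c(i),c(i)+1\}\le k$ at a non-creating slot needs one extra observation: the value $c(i)+1$ occurs only in the case $r>\sigma_i>\sigma_{i+1}$, where position $i$ is itself a descent of $\sigma$, forcing $c(i)\le k-1$; dually, at a creating slot the $k-c(i)$ descents at positions $\le i$ must fit among those positions, which yields $\ge k+1$. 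Second, the prefix invariant you flag as the hard step can be verified uniformly via a crossing-parity identity: with $b(i)$ the number of non-creating slots among $0,\ldots,i$ and $\mathrm{desc}(i)$ the number of descent junctions of the padded word among $0,\ldots,i$, one checks in all six local cases that $b(i)=\mathrm{desc}(i)-\chi(\bar\sigma_{i+1}<r)$, and substituting this into your local closed forms gives $\operatorname{im}(\sigma,i,r)=k+a(i)$ at creating slots and $k+1-b(i)$ at non-creating slots (where $a(i)=i+1-b(i)$), which is exactly your block invariant, the displayed max/min recursion, and hence the shuffle structure. Your route buys a short, elementary, fully local proof that would make the paper self-contained on this point; the citation is nevertheless the economical choice here, since the paper also needs the companion machinery of \cite{Ji-Zhang-2024} (the bijection $\Phi$ and Proposition~\ref{prop:insertb}), which your argument does not replace.
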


\begin{Proposition} \cite[Proposition 3.4]{Ji-Zhang-2024}\label{prop:insertb}
Suppose that $\sigma$ is  a permutation of length $m$ with $r$ descents. Let $p,q\notin \sigma$ and let $\sigma^{(i-1)}(p)$ be the permutation by inserting $p$ before $\sigma_i$. Then 
$\MIS_i(\sigma^{(i-1)}(p),q)$ is a permutation of the set $\{{\rm im}(\sigma,j,p)+\chi(q>p)\mid 0\leq j< i\}$, where $\chi(T)=1$ if the statement $T$ is true and $\chi(T)=0$   otherwise.
\end{Proposition}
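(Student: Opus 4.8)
The plan is to exploit the two--pointer description of the major increment sequence furnished by Proposition \ref{prop:insert}, which reduces the desired \emph{set} equality to a single scalar identity about maxima. The key observation is that a prefix of a major increment sequence always fills out an interval of consecutive integers.

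\emph{A prefix lemma.} Let $\rho$ be a permutation of length $N$ with $k=\des(\rho)$, let $r\notin\rho$, and for $1\le i\le N+1$ put $a:=\#\{0\le j\le i-1:\des(\rho^{(j)}(r))=\des(\rho)+1\}$, the number of descent--creating slots among the first $i$. By Proposition \ref{prop:insert}, reading $\MIS(\rho,r)$ from the left the descent--creating slots receive the increasing values $k+1,k+2,\dots$ while the remaining slots receive the decreasing values $k,k-1,\dots$. Hence after the first $i$ slots the creating ones have used $\{k+1,\dots,k+a\}$ and the others $\{k,k-1,\dots,k+a-i+1\}$, so
\[
\{\operatorname{im}(\rho,j,r):0\le j\le i-1\}=\{k+a-i+1,\,k+a-i+2,\,\dots,\,k+a\},
\]
an interval of $i$ consecutive integers with maximum $\des(\rho)+a$. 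The crucial consequence is that two such prefix sets of the same length $i$ coincide if and only if their maxima agree.

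\emph{Reduction to a maxima identity.} Apply the lemma with $\rho=\tau:=\sigma^{(i-1)}(p)$, $r=q$, and with $\rho=\sigma$, $r=p$. Writing $c=\chi(q>p)$, the target set $\{\operatorname{im}(\sigma,j,p)+c:0\le j<i\}$ is the shift by $c$ of the value set of $\MIS_i(\sigma,p)$, hence again an interval of $i$ consecutive integers. By the lemma the Proposition is therefore equivalent to the single equality
\[
\des(\tau)+a_\tau=\des(\sigma)+a_\sigma+c ,
\]
which we denote $(\dagger)$, where $a_\tau,a_\sigma$ count the descent--creating slots for the two insertions. This is the heart of the matter, and the expected main obstacle: the individual entries do \emph{not} match term by term, since the local comparisons of $q$ and of $p$ against the letters $\sigma_\ell$ genuinely differ, so the whole statement must be recovered from this one scalar identity rather than from a slot--by--slot correspondence.

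\emph{Computing the maxima.} To establish $(\dagger)$ I would put each maximum in closed form. Let $w=\rho_1\cdots\rho_{i-1}$ be the relevant prefix word. Splitting $a$ into the contribution of slots $0,\dots,i-2$ and of slot $i-1$, using the elementary count $\sum_{j=0}^{|w|}\chi(\text{slot }j\text{ creates a descent in }w)=|w|-\des(w)$ applied to $w$ (of length $i-1$), together with $\des(\rho)-\des(w)=\chi(\rho_{i-1}>\rho_i)+d_{\ge i}(\rho)$, where $d_{\ge i}(\rho)$ denotes the number of descents of $\rho$ at positions $\ge i$, the junction terms telescope and give
\[
\des(\rho)+a=(i-1)+d_{\ge i}(\rho)+\chi(r>\rho_i).
\]
Specializing to $\sigma$ (with $r=p$) yields $\des(\sigma)+a_\sigma=(i-1)+d_{\ge i}(\sigma)+\chi(p>\sigma_i)$, while for $\tau$ one has $\tau_i=p$ and $d_{\ge i}(\tau)=\chi(p>\sigma_i)+d_{\ge i}(\sigma)$, so $\des(\tau)+a_\tau=(i-1)+\chi(p>\sigma_i)+d_{\ge i}(\sigma)+\chi(q>p)$. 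Comparing the two expressions gives $(\dagger)$ immediately. The only delicate points are the boundary slots (the prepend $j=0$ and the append $i=N+1$, which carry no old adjacency); these I would dispose of by the same conventions used for the major--increment formula, after verifying the base case $i=1$ directly, where $\operatorname{im}(\tau,0,q)=\des(\tau)+\chi(q>p)=\des(\sigma)+\chi(p>\sigma_1)+\chi(q>p)=\operatorname{im}(\sigma,0,p)+c$.
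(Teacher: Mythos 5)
Your proposal is correct, but note that the paper does not actually prove this statement: it imports it verbatim as \cite[Proposition 3.4]{Ji-Zhang-2024}, so there is no in-paper argument to compare against, and what you have written amounts to an independent, self-contained proof modulo Proposition \ref{prop:insert} (itself quoted from the same reference). Your route is sound: the ``max$+1$/min$-1$'' recursion in Proposition \ref{prop:insert} does force every length-$i$ prefix of $\MIS(\rho,r)$ to have value set equal to an interval of $i$ consecutive integers with maximum $\des(\rho)+a$, where $a$ counts the descent-creating slots among the first $i$; since the target set is the $\chi(q>p)$-shift of such an interval of the same cardinality, the set equality is indeed equivalent to your scalar identity $(\dagger)$. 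I checked your closed form $\des(\rho)+a=(i-1)+d_{\ge i}(\rho)+\chi(r>\rho_i)$ directly: writing the creating indicator of an interior slot $j$ as $\chi(\rho_j>r)+\chi(r>\rho_{j+1})-\chi(\rho_j>\rho_{j+1})$ and summing over $j=0,\dots,i-1$, the mixed terms telescope exactly as you describe, and specializing to $\rho=\sigma$, $r=p$ and to $\rho=\tau=\sigma^{(i-1)}(p)$, $r=q$ (using $\tau_i=p$ and $d_{\ge i}(\tau)=\chi(p>\sigma_i)+d_{\ge i}(\sigma)$) yields $(\dagger)$ at once. The boundary cases you flag are genuinely the only delicate ones, and they close as you indicate: for $i=1$ the identity is the direct computation $\operatorname{im}(\tau,0,q)=\operatorname{im}(\sigma,0,p)+\chi(q>p)$, and for the append slot $i=m+1$ applied to $\sigma$ one has $a_\sigma=m-\des(\sigma)$, which matches your formula under the convention $\sigma_{m+1}=+\infty$, while $\tau_{m+1}=p$ still exists so the $\tau$-side formula applies verbatim. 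One small merit of your argument worth recording: it explains structurally why only a set-level (not entrywise) statement can hold, which your example-free remark asserts and which is easily confirmed on small cases.
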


\medskip 

We are now in a position to prove
Theorem \ref{thm:shuffspe}. 

\medskip 

\noindent{\it Proof of Theorem \ref{thm:shuffspe}.}
(1) Let $\alpha \in \pi \shuffle_l \delta$, where $\des(\alpha)=d$. Assume that $\Phi(\alpha) = (\lambda, \mu)$. To prove (1) in this theorem, it is equivalent to show that 
\[
m \geq \lambda_1 \geq \cdots \geq \lambda_{d - r} \geq d - s> \mu_1 \geq \cdots \geq \mu_{n - d + r} \geq 0.
\]
From Theorem \ref{thm:insert}, we have
\[
m \geq \lambda_1 \geq \cdots \geq \lambda_{d - r} \geq d - s \geq \mu_1 \geq \cdots \geq \mu_{n - d + r} \geq 0.
\]

We proceed to show that $\mu_{1} < d-s$ if $\mu \neq \emptyset$.  Since $\alpha \in \pi \shuffle_l \delta$, we have $\alpha_1=\delta_1$.  
 Recall that $\alpha^{(1)}$ is the permutation obtained by removing $\pi_1$ from~$\alpha$. Then $\des(\alpha^{(1)})=d-1$ or $\des(\alpha^{(1)})=d$. 
 \medskip 
 
{\it Case 1.1}. If $\des(\alpha^{(1)})=d$, then $\des(\alpha^{(0)})=\des(\alpha^{(1)})=d$, and so $j_1=1$. Since $\alpha_1=\delta_1$, it implies that $\pi_1$ could not be inserted in the first position. Moreover,     $\operatorname{im}(\alpha^{(1)}, 0, \pi_1) = d $, so  by Proposition \ref{prop:insert},  we derive that
 \[\maj(\alpha^{(0)})-\maj(\alpha^{(1)})\leq d-1.\]
Thus, 
$\mu_1=t(1)=\maj(\alpha^{(0)})-\maj(\alpha^{(1)})-d_1(\pi)\leq d-1-s$.

 {\it Case 1.2}. If $\des(\alpha^{(1)})=d-1$, then by the definition of the map $\Phi$, we see that  $j_1>1$, and $\des(\alpha^{(j_1)})=d-j_1+1$. Similarly, $\pi_j$ could not be inserted in the first position of $\alpha^{(j_1)}$ and $\operatorname{im}(\alpha^{(j_1)}, 0, \pi_{j_1}) = d-j_1+1$, so 
 by Proposition \ref{prop:insert}, we derive that 
\[\maj(\alpha^{(j_1-1)})-\maj(\alpha^{(j_1)})\leq d-j_1.\]
By definition,   $d_{j_1}(\pi)\geq s-j_1+1$  so $\mu_1=t(j_1)=\maj(\alpha^{(j_1-1)})-\maj(\alpha^{(j_1)})-d_{j_1}(\pi)\leq d-j_1-s+j_1-1=d-s-1$. 
  
   \medskip

Conversely, let $\lambda \in \mathcal{P}_{d - r}(d - s, m)$ and $\mu \in \mathcal{P}_{n - d + r}(0, d - s-1)$. Assume that $\Phi^{-1}(\lambda, \mu) = \overline{\alpha}$, where the map $\Phi^{-1}$ is the inverse of $\Phi$. In light of Theorem \ref{thm:insert}, we derive that $\overline{\alpha} = \overline{\alpha}_1 \cdots \overline{\alpha}_{n + m}$ is a shuffle of $\delta$ and $\pi$ with $d$ descents. To prove that $\overline{\alpha} \in \pi \shuffle_l \delta$, it suffices to show that $\overline{\alpha}_{1} = \delta_1$. Suppose to the contrary that $\overline{\alpha}_{1} = \delta_1$, that is, $\overline{\alpha}_{1} = \pi_1$. We have $\Phi(\overline{\alpha}) = \Phi(\Phi^{-1}(\lambda, \mu)) = (\lambda, \mu)$. Let $\overline{\alpha}^{(1)}$ denote the permutation obtained by removing $\pi_1$ from $\overline{\alpha}$. Then   $\des(\overline{\alpha}^{(1)})=d-1$ or $\des(\overline{\alpha}^{(1)})=d$. 
 \medskip 
 
 {\it Case 1.1'}. If $\des(\overline{\alpha}^{(1)})=d$, then $\des(\overline{\alpha}^{(0)})=\des(\overline{\alpha}^{(1)})=d$, and so $j_1=1$,  and $d_1(\pi)=s$. Since $\overline{\alpha}_{1} = \pi_1$, we derive that  
 $\maj(\overline{\alpha}^{(0)})-\maj(\overline{\alpha}^{(1)})=d.$ 
Thus, 
\[\mu_1=t(1)=\maj(\overline{\alpha}^{(0)})-\maj(\overline{\alpha}^{(1)})-d_1(\pi)=d-s.\] 

 {\it Case 1.2'}. If $\des(\overline{\alpha}^{(1)})=d-1$, then by the definition of the map $\Phi$, we see that  $j_1>1$, and $\des(\overline{\alpha}^{(j_1)})=d-j_1+1$.  Since $\overline{\alpha}_{1} = \pi_1$ and all the elements of $\delta$ are larger than the elements of $\pi$, it follows that ${\overline{\alpha}}^{(j_1-1)}_{1} = \pi_{j_1}$. Thus,   
$\maj(\overline{\alpha}^{(j_1-1)})-\maj(\overline{\alpha}^{(j_1)})=d-j_1+1
$ and $d_{j_1}(\pi)=s-j_1+1$. Consequently,  \[\mu_1=t(j_1)=\maj(\alpha^{(j_1-1)})-\maj(\alpha^{(j_1)})-d_{j_1}(\pi)=d-s.\] 

In both  cases, we derive that $\mu_1=d-s$, which  contradicts the condition that   $\mu_{1} \leq d-s-1$. Therefore, the assumption is false, so $\overline{\alpha}_{1} = \delta_1$, which implies that $\overline{\alpha} \in \pi \shuffle_l \delta$.  

\medskip

(2)  Let $\alpha \in \pi \shuffle_{ls} \delta$, where $\des(\alpha)=d$. Assume that $\Phi(\alpha) = (\lambda, \mu)$. The proof of  (2) in this theorem is equivalent to showing that 
\begin{equation}\label{part:case2}
m >\lambda_1 \geq \cdots \geq \lambda_{d - r} \geq d - s> \mu_1 \geq \cdots \geq \mu_{n - d + r} \geq 0.
\end{equation}
Observe that if $\alpha \in \pi \shuffle_{ls} \delta$, then $\alpha \in \pi \shuffle_{l} \delta$. 
According to (1) in this theorem, we see that  
\[
m \geq \lambda_1 \geq \cdots \geq \lambda_{d - r} \geq d - s >\mu_1 \geq \cdots \geq \mu_{n - d + r} \geq 0.
\]
 It remains to show that $\lambda_{1} < m$ if $\lambda \neq \emptyset$.  

Since $\alpha \in \pi \shuffle_{ls} \delta$, we have $\alpha_1=\delta_1$ and $\alpha_{n+m}=\delta_m$.  
 Recall that $\alpha^{(i)}$ is the permutation obtained by removing $\pi_1,\ldots, \pi_i$ from $\alpha$. Observe that $\alpha^{(n)}=\delta$ and $\des(\delta)=r$, so $\des(\alpha^{(n-1)})=r+1$ or $\des(\alpha^{(n-1)})=r$. 

 \medskip 
 
 {\it Case 2.1}. If $\des(\alpha^{(n-1)})=r+1$, then  $i_{d-r}=n$,  and $d_n(\pi)=0$. By definition, it is easy to see that $\operatorname{im}(\alpha^{(n)}, m, \pi_n) = m. $ 
 Since $\alpha_{n+m}=\delta_m$, it implies that $\pi_n$ could not be inserted in the last position, by Proposition \ref{prop:insert}, we derive that  $\operatorname{im}(\alpha^{(n)}, i, \pi_n)\leq m-1$ for $0\leq i<m$. Hence 
 \begin{equation}\label{pf:inserta}
 \maj(\alpha^{(n-1)})-\maj(\alpha^{(n)})\leq m-1.
 \end{equation}
It follows that 
\[\lambda_1=t(i_{d-r})=\maj(\alpha^{(n-1)})-\maj(\alpha^{(n)})-d_n(\pi)\leq m-1.
\]

 {\it Case 2.2}. If $\des(\alpha^{(n-1)})=r$, then by the definition of the map $\Phi$, we see that  $i_{d-r}<n$. Moreover, $\des(\alpha^{(i_{d-r}-1)})=r+1$ and $\des(\alpha^{(l)})=r$ for $i_{d-r}\leq l\leq n$. According to Proposition \ref{prop:insertb}  and by \eqref{pf:inserta}, we derive that 
 \[\maj(\alpha^{(i_{d-r}-1)})-\maj(\alpha^{(i_{d-r})})\leq m-1+d_{i_{d-r}}(\pi)\]
It follows that 
\[\lambda_1=t(i_{d-r})=\maj(\alpha^{(i_{d-r}-1)})-\maj(\alpha^{(i_{d-r})})-d_{i_{d-r}}(\pi)\leq m-1.
\]
     
\medskip 

Conversely, let $(\lambda, \mu)$ be a pair of partitions satisfying \eqref{part:case2}. Assume that $\Phi^{-1}(\lambda, \mu) = \overline{\alpha}$, where the map $\Phi^{-1}$ is the inverse of $\Phi$. In light of the first result in this theorem, we derive that $\overline{\alpha}=\overline{\alpha}_1\cdots \overline{\alpha}_{n+m} \in \pi \shuffle_l \delta$. To prove that $\overline{\alpha} \in \pi \shuffle_{ls} \delta$, it suffices to show that $\overline{\alpha}_{n+m} = \delta_m$. Suppose to the contrary that $\overline{\alpha}_{n+m} = \delta_m$ and assume that $\overline{\alpha}_{n+m}  = \pi_n$. We have $\Phi(\overline{\alpha}) = \Phi(\Phi^{-1}(\lambda, \mu)) = (\lambda, \mu)$. Let $\overline{\alpha}^{(i)}$ denote the permutation obtained by removing $\pi_1,\ldots, \pi_i$ from $\overline{\alpha}$. Since $\overline{\alpha}_{n+m}  = \pi_n$ and all of the elements of $\delta$ are larger than the elements of $\pi$, we derive that  $\des(\overline{\alpha}^{(n-1)})=r+1$. In this case, we see that $i_{d-r}=n$,  and $d_n(\pi)=0$. Since $\overline{\alpha}_{n+m} = \pi_n$, we derive that  
 $\maj(\overline{\alpha}^{(n-1)})-\maj(\overline{\alpha}^{(n)})=m.$ 
Thus, 
\[\lambda_1=t(i_{d-r})=\maj(\overline{\alpha}^{(n-1)})-\maj(\overline{\alpha}^{(n)})-d_{n}(\pi)=m,\] 
 which  contradicts the condition that   $\lambda_{1} <m$. Therefore, the assumption is false, so $\overline{\alpha}_{m+n} = \delta_m$, which implies that $\alpha \in \pi \shuffle_{ls} \delta$. 

 \medskip 

(3) Let $\alpha \in \pi \shuffle_{ll} \delta$, where $\des(\alpha)=d$. Assume that $\Phi(\alpha) = (\lambda, \mu)$. To establish (3) in this theorem, by means of \eqref{int-GassCoeft}, it is enough to show that
\begin{equation} \label{part:case3}
m \geq \lambda_1 \geq \cdots \geq \lambda_{d - r} > d - s> \mu_1 \geq \cdots \geq \mu_{n - d + r} \geq 0.
\end{equation}
Note that if $\alpha \in \pi \shuffle_{ll} \delta$, then $\alpha \in \pi \shuffle_{l} \delta$. From the first part of this theorem, we derive that 
\[
m \geq \lambda_1 \geq \cdots \geq \lambda_{d - r} \geq d - s >\mu_1 \geq \cdots \geq \mu_{n - d + r} \geq 0.
\]
We proceed to show that $\lambda_{d - r} >d-s$ if $\lambda \neq \emptyset$.

Since $\alpha \in \pi \shuffle_{ll} \delta$, we have $\alpha_1=\delta_1$ and $\alpha_2=\delta_2$.  
 Recall that $\alpha^{(1)}$ is the permutation obtained by removing $\pi_1$ from $\alpha$. Then $\des(\alpha^{(1)})=d-1$ or $\des(\alpha^{(1)})=d$. 

\medskip 
 
 {\it Case 3.1}. If $\des(\alpha^{(1)})=d-1$, then by the definition of the map $\Phi$, we see that  $i_1=1$ and $d_1(\pi)=s$. 
 Since $\alpha_1=\delta_1<\alpha_2=\delta_2$ , it implies that $\pi_1$ could not be inserted in the first position and the second position. Moreover, $\operatorname{im}(\alpha^{(1)}, 0, \pi_1) = d-1$ and $
\operatorname{im}(\alpha^{(1)}, 1, \pi_1) = d. $ Thus,  by Proposition \ref{prop:insert}, we derive that
 $\maj(\alpha^{(0)})-\maj(\alpha^{(1)})>d,$ as $\des(\alpha^{(0)})=\des(\alpha^{(1)})+1$. 
 It follows that 
 \[\lambda_{d-r}=t(i_1)=\maj(\alpha^{(0)})-\maj(\alpha^{(1)})-d_1(\pi)> d-s.\]

{\it Case 3.2}.  If $\des(\alpha^{(1)})=d$,    then by the definition of the map $\Phi$, we see that  $i_{1}>1$. Moreover,  $\des(\alpha^{(i_{1})})=d-1$ and $\des(\alpha^{(l)})=d$ for $1\leq l\leq i_1-1$.  Since $\alpha_1=\delta_1<\alpha_2=\delta_2$ , it implies that $\pi_{i_1}$ could not be inserted in the first position and the second position of $\alpha^{(i_1)}$. Moreover, $\alpha^{(i_1)}_1=\delta_1$ and $\alpha^{(i_1)}_1=\delta_2$, thus, we have 
$\operatorname{im}(\alpha^{(i_1)}, 0, \pi_{i_1}) = d-1$ and $ 
\operatorname{im}(\alpha^{(i_1)}, 1, \pi_1) = d.  $ 
Hence  $\maj(\alpha^{(i_1-1)})-\maj(\alpha^{(i_1)})>d$ since $\des(\alpha^{(i_{1}-1)})=\des(\alpha^{(i_{1})})+1$. Observe that  $d_{i_1}(\pi)\leq \des(\pi)=s$, it follows that  
\[\mu_1=t(i_1)=\maj(\alpha^{(i_1-1)})-\maj(\alpha^{(i_1)})-d_{i_1}(\pi)\geq  d+1-s.\]

\medskip 

 Conversely, let $(\lambda, \mu)$ be a pair of partitions satisfying \eqref{part:case3}. Assume that $\Phi^{-1}(\lambda, \mu) = \overline{\alpha}$, where the map $\Phi^{-1}$ is the inverse of $\Phi$. In light of the first part of this theorem, we derive that $\overline{\alpha} = \overline{\alpha}_1 \cdots \overline{\alpha}_{n + m} \in \pi \shuffle_l \delta$. To prove that $\overline{\alpha} \in \pi \shuffle_{ll} \delta$, it suffices to show that $\overline{\alpha}_{2} = \delta_2$. Suppose to the contrary  and assume that $\overline{\alpha}_{2} = \pi_1$. We have $\Phi(\overline{\alpha}) = \Phi(\Phi^{-1}(\lambda, \mu)) = (\lambda, \mu)$. Let $\overline{\alpha}^{(1)}$ denote the permutation obtained by removing $\pi_1$ from $\overline{\alpha}$. Then   $\des(\overline{\alpha}^{(1)})=d-1$ or $\des(\overline{\alpha}^{(1)})=d$.

 \medskip  

 {\it Case 3.1'}. If $\des(\overline{\alpha}^{(1)})=d-1$, then by the definition of the map $\Phi$, we see that  $i_1=1$.    Since $\overline{\alpha}_{2} = \pi_1$, it follows that    
$\maj(\overline{\alpha}^{(0)})-\maj(\overline{\alpha}^{(1)})=d.
$
  Consequently,  \[\lambda_{d-r}=t(1)=\maj(\alpha^{(0)})-\maj(\alpha^{(1)})-d_{1}(\pi)=d-s.\] 

 {\it Case 3.2'}. If $\des(\overline{\alpha}^{(1)})=d$, then   by the definition of the map $\Phi$, we see that  $i_{1}>1$, and $\des(\overline{\alpha}^{(i_{1})})=d-1$ and $\des(\overline{\alpha}^{(l)})=d$ for $1\leq l\leq i_1-1$.  Since $\overline{\alpha}_1=\delta_1$ and $\overline{\alpha}_2=\pi_1$, it implies that $\pi_{i_1}$ should  be inserted in    the second position of $\alpha^{(i_1)}$, that is, $\overline{\alpha}^{(i_1-1)}_1=\delta_{1}$ and $\overline{\alpha}^{(i_1-1)}_2=\pi_{i_1}$,  
 otherwise, we could not get $\overline{\alpha}^{(i_1-1)}, \ldots, \overline{\alpha}^{(1)}$ so that  
 $\des(\overline{\alpha}^{(l)})=d$ for $1\leq l\leq i_1-1$ and $\overline{\alpha}_2=\pi_1$ since $\delta_1<\delta_2$. Moreover, $d_{i_1}(\pi)=s$.    
Hence,   
 \[\mu_1=t(i_1)=\maj(\overline{\alpha}^{(i_1-1)})-\maj(\overline{\alpha}^{(i_1)})-d_{i_1}(\pi)= d-s.\]

In both  cases, we derive that $\lambda_{d-r}=d-s$, which  contradicts the condition that   $\lambda_{d-r}>d-s$. Therefore, the assumption is false, so $\overline{\alpha}_{2} = \delta_2$, which implies that $\overline{\alpha} \in \pi \shuffle_{ll} \delta$.

\section{simsun  permutations and Andr\'e II permutations}

This section is dedicated to  establishing   relation (b) in Theorem \ref{th:equi-shape} by constructing a bijection between the set of $n$-simsun  permutations and the set of $n$-Andr\'e II permutations  with the same tree shape $T$.

\medskip 

\begin{Theorem} \label{thm:bij} For   any  unlabeled binary tree $T$ in $\URL_n$, there exists a bijection   $\Omega$ between the set  of $n$-simsun  permutations in $\RS(T)$  and the set    of $n$-Andr\'e II permutations in $\AndII(T)$.
\end{Theorem}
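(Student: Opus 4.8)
The plan is to reduce both families to purely local conditions on the labelled tree and then exploit that the two conditions differ only along the rightmost path. The first step is a local characterization of simsun trees. Since the largest label of an increasing tree is always a leaf, deleting $n, n-1, \ldots$ in turn removes only leaves and never restructures the tree; hence a vertex lies off the rightmost path of every intermediate tree $T'$ if and only if it lies off the rightmost path of the original tree. A branching vertex $s$ off the path acquires an ``only left child'' during the deletion exactly when its right subtree empties while its left subtree is not yet empty, and since deletion proceeds from the largest label downward, this happens precisely when the right child of $s$ carries a larger label than its left child. Forbidding this off the path therefore says: at every branching vertex off the rightmost path the right child has the smaller label. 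Together with the explicit requirement that $n$ occupy the rightmost vertex, this is the sought description; by contrast an Andr\'e II tree satisfies the same inequality at \emph{every} branching vertex, on or off the path.

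With these two descriptions the key observation is that the families coincide away from the rightmost path: every pendant left subtree hanging off a path vertex is an Andr\'e II tree on its own label set in both models (in the simsun case the off-path inequality is exactly the Andr\'e II condition inside that subtree, and no ``max-at-rightmost'' constraint is imposed on it). Constructing $\Omega$ thus amounts to redistributing labels among the path vertices and their pendant subtrees. I would organize this as a recursion walking down the rightmost path: writing root $1$, left subtree $\mathcal{L}$ and right subtree $\mathcal{R}$, one has $\mathcal{L}$ an Andr\'e II tree in both models, while $\mathcal{R}$ is simsun (resp. Andr\'e II). The two decompositions agree except for the placement of the two extreme labels: the simsun side forces $n$ into $\mathcal{R}$ while leaving $2$ free to sit in $\mathcal{L}$, whereas the Andr\'e II side forces $2$ into $\mathcal{R}$ while leaving $n$ free to sit in $\mathcal{L}$. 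Recursing into $\mathcal{R}$ and handling $\mathcal{L}$ by order-isomorphism, both counts reduce to the same factor $\binom{n-2}{|\mathcal{L}|}\,|\AndII(\mathcal{L})|$ times the count for $\mathcal{R}$, so $|\RS(T)|=|\AndII(T)|$ follows by induction and a bijection certainly exists; since $\Des$ is determined by the shape (Proposition \ref{th:equi-shape-des}), the statistics $\des$ and $\maj$ agree automatically and only $\ides$ is genuinely at stake, which is what relation (b) of Theorem \ref{th:equi-shape} needs.

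The main obstacle is precisely the preservation of $\ides$. Writing the permutation as the in-order word $\sigma=w_L\,1\,w_R$ on label sets $A\sqcup\{1\}\sqcup B$, a direct count yields $\ides(\sigma)=\ides_{\mathbb{Z}}(w_L)+\ides_{\mathbb{Z}}(w_R)+r(A)$, where $\ides_{\mathbb{Z}}$ counts inverted pairs of \emph{consecutive integers} and $r(A)$ is the number of maximal blocks of consecutive integers in $A$. The difficulty is that $\ides_{\mathbb{Z}}$ depends on the actual integer labels, not merely on relative order, so it does not localize cleanly to subtrees; moving $2$ out of $\mathcal{L}$ (and correspondingly $n$ into $\mathcal{L}$) changes which consecutive pairs are inverted. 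The construction of $\Omega$ must therefore specify a relabeling rule within the recursion engineered so that the loss of the $(1,2)$ contribution on the simsun side is compensated by a boundary contribution created on the Andr\'e II side, and symmetrically for the label $n$; the inverse map is produced along the same path, giving bijectivity.

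I expect the hardest step to be verifying this compensation, that is, proving that the chosen relabeling preserves the right-hand side of the displayed decomposition at every level of the recursion. Establishing the local simsun characterization in the first paragraph is routine once one notices deletion removes only leaves, and the cardinality identity is immediate; the substance of Theorem \ref{thm:bij} lies in making the redistribution $\ides$-preserving. A cautionary sanity check (for instance on the shape with root, left leaf, and a right two-step path, where the simsun inverse-descent sets are $\{1\},\{2\}$ while the Andr\'e II ones are $\{2\},\{3\}$) shows that the finer set-valued statistic $\IDes$ is \emph{not} preserved, so the argument must genuinely control the cardinality $\ides$ rather than the descent set of the inverse, and any attempt to match $\IDes$ directly is doomed.
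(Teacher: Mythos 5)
Your argument does prove the literal statement, but by a genuinely different route from the paper's. Your local characterizations are correct: since the minimum label of any subtree of an increasing tree sits at its root, your child-label comparison is equivalent to the subtree-minimum comparison, so simsun trees are exactly the increasing trees of shape in $\URL_n$ with $n$ at the rightmost vertex and with the right child smaller than the left child at every branching vertex \emph{off} the rightmost path, while Andr\'e II trees impose that inequality at \emph{every} branching vertex. Writing $L$ and $R$ for the shapes of the subtrees of the root, your two recursions
\[
|\RS(T)|=\binom{n-2}{|L|}\,|\AndII(L)|\,|\RS(R)|,\qquad
|\AndII(T)|=\binom{n-2}{|L|}\,|\AndII(L)|\,|\AndII(R)|,
\]
are both right ($n$ is forced into the right factor on the simsun side, $2$ on the Andr\'e II side, and both properties depend only on relative order), so induction gives $|\RS(T)|=|\AndII(T)|$ and a bijection exists. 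The paper proceeds quite differently: it constructs $\Omega$ explicitly. Given $\hat T=\Psi(\sigma)$ with rightmost-path label set $R_{\hat T}=\{v_0=1<v_1<\cdots<v_m=n\}$, it reassigns these labels in order to $1,\,2,\,v_1+1,\ldots,v_{m-1}+1$ and adds $1$ to every off-path label; the result is an increasing tree of the same shape which is Andr\'e II, and the operation is reversible. What the explicit map buys is precisely what your third and fourth paragraphs identify as the hard step and leave unresolved: the paper's concluding proposition shows that under this relabeling $i\in\IDes(\hat T)\Leftrightarrow i+1\in\IDes(\widetilde T)$, whence $\ides$ is preserved and $\imaj$ shifts by $\ides$ (relations \eqref{bij:rel} and \eqref{bij:relb}), which is exactly what relation (b) of Theorem \ref{th:equi-shape} requires. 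Your sanity check that $\IDes$ cannot be matched verbatim is consistent with this: the paper's $\Omega$ shifts the inverse-descent set up by one rather than preserving it.

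So, judged strictly against Theorem \ref{thm:bij} as stated, your counting proof stands and is arguably more elementary; but it is a pure existence argument, and the ``compensation rule'' you were searching for in the recursion is exactly the rightmost-path relabeling above. Without it (or an equivalent explicit construction) the downstream $\ides$-equidistribution remains open in your approach, whereas the paper's proof of this theorem is designed so that the statistic-preservation comes out of the same construction.
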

\begin{proof}
  Given   an unlabeled binary tree $T$ in $\URL_n$, let $\sigma $ be a simsun  permutation in $\RS(T)$, we aim to define $\tau=\Omega(\sigma) $ belonging to  $\AndII(T)$.

Let $\hat{T}:=\Psi(\sigma)$ be the increasing binary tree corresponding to $\sigma$ under the bijection $\Psi$ defined in Definition \ref{defi:mappsi} and let 
\begin{equation} \label{rel-Ta}
R_{\hat{T}}=\{v_0<v_1<\cdots <v_{m-1}<v_{m}\}
\end{equation}
be the set of vertices that don't belong to any left subtrees of $\hat{T}$. By the definition of simsun  tree, we see that 
 $v_0=1$ and $v_m=n$. Let $\bar{R}_{\hat{T}}$ be the set of vertices that don't belong to $R_{\hat{T}}$. Assume that 
 \begin{equation} \label{rel-Tb}
 \bar{R}_{\hat{T}}=\{s_1<\cdots <s_{n-m-1}\}.
 \end{equation}
Note that 
$R_{\hat{T}} \cup \bar{R}_{\hat{T}}=\{1,2,\ldots, n\}.$ 
 
 We then relabel the elements of $R_{\hat{T}}$ according to the permutation 
\[\left( \begin{array} {ccccccc}
v_0=1&v_1&v_2 &v_3&\cdots & n\\
1&v_0+1=2&v_1+1&v_2+1 &\cdots & v_{m-1}+1
\end{array} 
\right).
\]

For other vertices that belong to $\bar{R}_{\hat{T}}$, we just add one to each of their values. 
By this operation, we obtain a new binary tree  $\widetilde{T}$.  From the above construction, we see that 
\begin{equation} \label{rel-Tc}
R_{\widetilde{T}}=\{ 1<2<v_1+1<\cdots <v_{m-1}+1\},
\end{equation}
and 
\begin{equation} \label{rel-Td}
\bar{R}_{\widetilde{T}}=\{s_1+1<\cdots <s_{n-m-1}+1\}.
\end{equation}
It is straightforward  to check that $ R_{\widetilde{T}} \cup \bar{R}_{\widetilde{T}}=\{1,2,\ldots, n\}$ and $\widetilde{T}$ is an increasing binary tree sharing the same tree shape as $\hat{T}$. Then $\tau$ is defined to be the permutation generated by the increasing binary tree  $\widetilde{T}$ under the bijection $\Psi$, that is, $\tau=\Psi^{-1}(\widetilde{T})$. 

\medskip 

 For example, for the simsun  permutation $\sigma= 21473658$, the corresponding increasing binary tree is shown in Fig. \ref{fig:bije} (a). We have $R_{\hat{T}}=\{1,3,5, 8\}$. Relabel the elements of $R_{\hat{T}}=\{1,3,5,8\}$ according to the permutation 
\[\left( \begin{array} {cccc}
1&3&5 &8 \\
1&2&4&6 
\end{array} 
\right)
\]
and add one to each of the vertices not belong to $R_{\hat{T}}$. We obtain the increasing binary tree $\widetilde{T}$ shown on  Fig. \ref{fig:bije} (b).  
 Then    $\tau=\Psi^{-1}(\widetilde{T})=31582746$.  
\begin{figure}
\centering
\caption*{ (a) simsun  tree $\hat{T}$ \hspace{3cm} (b) Andr\'e II tree $\widetilde{T}$}
\begin{tikzpicture}
[sibling distance=15mm, level distance=8mm,
every node/.style={circle, draw, fill=white, inner sep=0.5mm},
level 1/.style={sibling distance=17mm},
level 2/.style={sibling distance=14mm},
level 3/.style={sibling distance=8mm}]
\node [circle, draw=red, fill=white, inner sep=0.5mm] {1}
child { node [circle, draw, fill=white, inner sep=0.5mm] {2}
child {     edge from parent [white] } child {     edge from parent [white] } } child { node [circle, draw=red, fill=white, inner sep=0.5mm] {3}
child { node [circle, draw, fill=white, inner sep=0.5mm] {4}
child {     edge from parent [white] } child { node [circle, draw, fill=white, inner sep=0.5mm] {7}
 } } child { node [circle, draw=red, fill=white, inner sep=0.5mm] {5}
child { node [circle, draw, fill=white, inner sep=0.5mm] {6}
} child {  node [circle, draw=red, fill=white, inner sep=0.5mm] {8}   } } };

\node[draw=none,fill=none] at (0.5,-3.5) {$\sigma=2\, 1\, 4\, 7\, 3\, 6\, 5\, 8 $} ;
\draw[->, line width=1pt, shorten >=2pt, shorten <=2pt]  (2.5,-1.3) --(4.5,-1.3)
              node[draw=none,fill=none, yshift=4mm,xshift=-10mm] {$\Omega$};
              \draw[->, line width=1pt, shorten >=2pt, shorten <=2pt]  (4.5,-1.7) --(2.5,-1.7)
              node[draw=none,fill=none, yshift=-4mm, xshift=10mm] {$\Omega^{-1}$};
\end{tikzpicture}
\begin{tikzpicture}
[sibling distance=15mm, level distance=8mm,
every node/.style={circle, draw, fill=white, inner sep=0.5mm},
level 1/.style={sibling distance=17mm},
level 2/.style={sibling distance=14mm},
level 3/.style={sibling distance=8mm}]
\node [circle, draw=red, fill=white, inner sep=0.5mm] {1}
child { node [circle, draw, fill=white, inner sep=0.5mm] {3}
child {     edge from parent [white] } child {     edge from parent [white] } } child { node [circle, draw=red, fill=white, inner sep=0.5mm] {2}
child { node [circle, draw, fill=white, inner sep=0.5mm] {5}
child {     edge from parent [white] } child { node [circle, draw, fill=white, inner sep=0.5mm] {8}
 } } child { node [circle, draw=red, fill=white, inner sep=0.5mm] {4}
child { node [circle, draw, fill=white, inner sep=0.5mm] {7}
} child {  node [circle, draw=red, fill=white, inner sep=0.5mm] {6} } } };
\node[draw=none,fill=none] at (0.5,-3.5) {$\tau=3\, 1\, 5\, 8\, 2\, 7\, 4\, 6 $} ;
\end{tikzpicture}
\caption{An illustration of the bijection $\Omega$.} \label{fig:bije}
\end{figure}

\medskip 

We proceed to show that  $\tau \in \AndII (T)$. It suffices to show that  $\widetilde{T}$  satisfies the property of Andr\'e II trees: for any internal vertex  ${s}$ in   $\widetilde{T}$, the right subtree $\widetilde{T}_r(s)$ contains the vertex with
the minimum  label in $\widetilde{T}(s)$  excluding $s$ itself. By definition, the minima of an empty subtree is defined as $+\infty$.

Since $\hat{T}$ is a simsun  tree, by definition, for  any internal vertex  $s \in \bar{R}_{\hat{T}}$, the minimum label in its left subtree is   always larger than the minimum label in its right subtree. Otherwise,  removing vertices with labels larger than this minimum label would result in a tree violating the simsun  tree property. Thus, from the above construction, we see that  for any internal vertex  $s \in \bar{R}_{\widetilde{T}}$,  the right subtree $\widetilde{T}_r(s)$ contains the vertex with
the minimum label in $\widetilde{T}(s)$  excluding $s$ itself. 

 For any internal vertex  ${s} \in {R}_{\widetilde{T}}$, we claim that the minimum label in its left subtree ($\widetilde{T}_l(s)$) is   always larger than the minimum label in its right subtree ($\widetilde{T}_r(s)$). This holds because, in the increasing binary tree $\hat{T}$,   the vertex with label equal to the minimum label of $\widetilde{T}_r(s)$ minus one is the parent of the vertex with label equal to  the minimum label minus one in  $\widetilde{T}_l(s)$. Consequently, the minimum label in  $\widetilde{T}_l(s)$ is  always larger than   that in $\widetilde{T}_r(s)$, proving our claim. Hence for any internal vertex  $s \in {R}_{\widetilde{T}}$,  the right subtree $\widetilde{T}_r(s)$ contains the vertex with
the minimum label in $\widetilde{T}(s)$  excluding $s$ itself. 

Thus,  $\widetilde{T}$ is an Andr\'e II tree, so $\tau=\Psi^{-1}(\widetilde{T})$ is an Andr\'e II permutation with  tree shape $T$. Moreover, it is easy to verify that  the above procedure is reversible.  Hence the map $\Omega$ is a bijection between $\RS(T)$ and $\AndII(T)$ for any given unlabeled binary tree $T$ in $\URL_n$.
\end{proof}
 
\medskip 

{\Remark  The bijection $\Omega$ in Theorem \ref{thm:bij} is constructed in the spirit of Sc\"utzenberger's jeu de taquin \cite{Sch-1972}. Lin and Kim \cite{Lin-Kim-2021} constructed a similar bijection between 0-1-2-increasing trees and binary increasing trees, which yields a bijection  between 000-avoiding inversion sequences and Simsun permutations. }

\medskip 

From Theorem \ref{thm:bij} and Proposition \ref{th:equi-shape-des}, we see that the bijection $\Omega$   preserves $\des$  and $\maj$ statistics. To  establish   relation (b) in Theorem \ref{th:equi-shape}, it remains to show that the bijection $\Omega$   preserves  $\ides$  statistics. 
In fact, this bijection $\Omega$ also induces  relations involving the inversions ($\inv$), the imajor index ($\imaj$) and  right-to-left minima ($\RLmin$)  between simsun  permutations and Andr\'e II permutations (see Proposition \ref{th:equi-shape-des2}).   Recall that the number of inversions of $\sigma=\sigma_1\cdots \sigma_n$, denoted $\inv(\sigma)$ is the count of pairs of indices $(i, j)$ where $1\leq i < j\leq n$ and $\sigma_i > \sigma_j$. The number of  right-to-left minima, denoted $\RLmin(\sigma)$ is the count of the elements $\sigma_i$ such that $\sigma_j > \sigma_i$  for every
 $j>i$. Let ${\IDes}(\sigma)$ denote the set of descents of  the inverse permutation $\sigma^{-1}$, that is, ${\IDes}(\sigma)={\Des}(\sigma^{-1})$.
 The  imajor index of $\sigma$, denoted $\imaj(\sigma)$  is the sum of  the indices in  ${\IDes}(\sigma)$.   
 \medskip 

Before proceeding, let us first define an inversion and an idescent of an increasing binary tree.

\begin{Definition}\label{defi-inv-ides-Andre} Let $T$ be an increasing binary  tree on the set $[n]$. An inversion of $T$ is a pair of vertices $(i,j)$, where $i>j$,   and either $j$ lies to the right of the path from root 1 to $i$ or $j$ is on the path from root 1 to $i$ and left child of $j$ is contained in this path.  Moreover, $i-1$ is called an idescent of $T$ if  $(i,i-1)$ is an inversion of $T$. 
\end{Definition}

The number of  inversions of $T$ is denoted by ${\rm inv}\  (T)$,  the set of idescents by $\IDes(T)$ and  the set of vertices of ${T}$ that don't belong to any left subtrees of ${T}$ by $R_{{T}}$. For example, for the increasing binary tree $\hat{T}$  depicted in  Fig.~\ref{fig:bije}~(a),  we see that $(2,1),\,(4,3),\, (7,3),\, (7,6),\, (7,5),\, (6,5)$ are the inversions of $\hat{T}$, whereas, $1,\,3,\,6,\, 5$ are idescents of $\hat{T}$. Thus, $\inv (\hat{T})=6$,   $\IDes(\hat{T})=\{1,\,3,\,5,\, 6\}$ and $R_{\hat{T}}=\{1,3,5,8\}$. 

\medskip 

From the definition of $\Psi$, it is not difficult to derive the following proposition.

\begin{Proposition} \label{th:equi-shape-des2} Let $\sigma$ be a permutation and let $T_\sigma=\Psi(\sigma)$ be the increasing binary tree corresponding to $\sigma$ under the bijection $\Psi$.  Then 
$$
{\inv}(\sigma) = { \inv}(T_\sigma), \quad \RLmin(\sigma)=|R_{T_\sigma}|, \quad 
{\IDes}(\sigma) = {\rm \IDes}(T_\sigma). 
$$
\end{Proposition}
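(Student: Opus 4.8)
The plan is to establish all three identities in Proposition~\ref{th:equi-shape-des2} by a uniform induction on the structure of the tree, exploiting the recursive definition of $\Psi$ given in Definition~\ref{defi:mappsi}. Recall that $\Psi$ writes $\sigma = \sigma^{(L)}\, 1 \, \sigma^{(R)}$, where $1 = \min(\sigma)$ becomes the root, $T_{\sigma^{(L)}}$ is the left subtree and $T_{\sigma^{(R)}}$ is the right subtree. Since all three statistics on the left-hand side ($\inv$, $\RLmin$, $\IDes$) and their tree counterparts are defined via the position of each letter relative to the others, I would first reduce each equality to a statement about how a single letter $i$ contributes, and then track that contribution through the left/right decomposition.

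First I would handle $\inv(\sigma) = \inv(T_\sigma)$. The key observation is that for a pair $(i,j)$ with $i > j$, the condition ``$\sigma$ has $j$ occurring after $i$'' translates, under $\Psi$, exactly into the tree condition in Definition~\ref{defi-inv-ides-Andre}: either $j$ sits in a subtree to the right of the path from the root to $i$, or $j$ lies on that path with its left child also on the path. Concretely, $j$ appears to the right of $i$ in the word reading of $T_\sigma$ (the in-order traversal recovering $\sigma$) if and only if this geometric condition holds; I would verify this by induction, checking that an inversion $(i,j)$ is counted once in the left subtree, once in the right subtree, or as a ``crossing'' pair spanning the root, and that each case matches the in-order reading. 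Since $\inv(\sigma)$ counts precisely pairs $i>j$ with $i$ preceding $j$ in $\sigma$, the two counts agree.

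Next, $\RLmin(\sigma) = |R_{T_\sigma}|$ follows from the recursive structure: a letter is a right-to-left minimum of $\sigma$ precisely when nothing smaller appears to its right, which under $\Psi$ means it never lies inside a left subtree of any ancestor—equivalently, it belongs to $R_{T_\sigma}$, the set of vertices not contained in any left subtree. The root~$1$ is always both the global minimum and the root, hence in $R_{T_\sigma}$, and the recursion passes only through right subtrees, so I would argue by induction that the right-to-left minima of $\sigma$ are exactly the vertices reachable from the root by right-edges together with their right-descendants not shielded by a left subtree. For the last identity, $\IDes(\sigma) = \IDes(T_\sigma)$, I would specialize the inversion analysis to consecutive values: $i-1 \in \IDes(\sigma) = \Des(\sigma^{-1})$ means $i$ precedes $i-1$ in $\sigma$, which is the $j = i-1$ case of the inversion correspondence, giving exactly the tree condition defining an idescent.

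\emph{The main obstacle} will be verifying the inversion correspondence cleanly, since Definition~\ref{defi-inv-ides-Andre} encodes a nontrivial geometric condition (the disjunction involving the root-to-$i$ path and the left child of $j$) that must be matched precisely against ``$i$ before $j$ in the in-order reading.'' The subtle case is when $j$ lies \emph{on} the path to $i$: here $j$ precedes $i$ in the subtree rooted at $j$ only if $i$ is in the right subtree of $j$, which is why the definition restricts to $j$ whose left child lies on the path (i.e.\ $i$ is a right-descendant, or in a right subtree hanging off the path). Once this boundary case is pinned down by a careful induction on the root decomposition, the remaining two identities follow as corollaries, and the whole proposition reduces to the single structural lemma about in-order readings of increasing binary trees.
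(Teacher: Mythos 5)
Your proposal is correct: the paper gives no written proof (it states the proposition ``follows from the definition of $\Psi$''), and your argument fleshes out exactly that intended verification --- identifying inversions of $\sigma$ with the tree inversions of Definition~\ref{defi-inv-ides-Andre} via the in-order reading (the case $j$ on the path to $i$ corresponding to ancestors $j$ whose left subtree contains $i$, the other case to the LCA sending $i$ left and $j$ right), then specializing to consecutive values for $\IDes$ and characterizing $R_{T_\sigma}$ as the all-right-edge path for $\RLmin$. This matches the paper's approach, merely making explicit what it leaves to the reader.
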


We conclude this paper with the proof of the following proposition, which directly implies relation (b) in Theorem \ref{th:equi-shape}.

\begin{Proposition}\label{simsun _tree_Andre2_tree_bijection}  Given an  unlabeled binary tree $T \in \URL_n$, let $\sigma \in \RS(T)$ be a simsun permutation   and let $\tau=\Omega(\sigma) \in \AndII(T)$ be  the corresponding Andr\'e II permutation.  
Then  
\begin{align}
\ides(\tau) &=\ides(\sigma), \label{bij:rel}\\  
\imaj(\tau) &=\imaj(\sigma)+\ides(\sigma), \label{bij:relb} \\
\inv(\tau) &=\inv(\sigma)+n-1-\RLmin(\sigma). \label{bij:relc}
\end{align}
\end{Proposition}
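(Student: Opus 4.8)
The plan is to transport everything to the increasing binary trees $\hat T=\Psi(\sigma)$ and $\widetilde T=\Psi(\tau)$ and to exploit the fact, already built into the proof of Theorem~\ref{thm:bij}, that $\Omega$ acts by one explicit relabelling. By Proposition~\ref{th:equi-shape-des2} it is enough to prove the two tree statements
\[
\IDes(\widetilde T)=\{\,i+1: i\in\IDes(\hat T)\,\}
\quad\text{and}\quad
\inv(\widetilde T)=\inv(\hat T)+|\bar R_{\hat T}|,
\]
since the first yields \eqref{bij:rel} and \eqref{bij:relb} and the second yields \eqref{bij:relc} (note $|R_{\hat T}|=\RLmin(\sigma)$). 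Writing $R_{\hat T}=\{v_0<\cdots<v_m\}$ for the right spine and $L_j=T_l(v_j)$ for the left subtree hanging at $v_j$, the in-order reading of $\hat T$ factors as $\sigma=L_0\,v_0\,L_1\,v_1\cdots L_m\,v_m$. Since $\Omega$ preserves the shape, reading $\widetilde T$ in-order gives $\tau=f\circ\sigma$, i.e. $\tau_k=f(\sigma_k)$, where $f$ is the relabelling of Theorem~\ref{thm:bij}. The first thing I would record is the structural form of $f$: it fixes $1$ and, on each interval $I_k=\{v_{k-1}+1,\dots,v_k\}$, it is the forward cyclic shift $v_{k-1}+1\mapsto v_{k-1}+2\mapsto\cdots\mapsto v_k\mapsto v_{k-1}+1$, the intervals $I_1,\dots,I_m$ partitioning $\{2,\dots,n\}$.

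The one structural input I would isolate as a lemma is a consequence of $\hat T$ being increasing: if $t\in\bar R_{\hat T}$ satisfies $v_{k-1}<t<v_k$, then $t$ lies in some $L_l$ with $l\le k-1$. Indeed every label in $L_l$ exceeds $v_l$, so $v_l<t<v_k$ forces $v_l<v_k$, whence $l\le k-1$; as $L_l$ precedes $v_{k-1}$ in the factorization above, $t$ precedes $v_{k-1}$, and a fortiori $v_k$, in $\sigma$.

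For the inversion identity I would compute $\inv(\tau)-\inv(\sigma)$ as the signed number of value pairs whose order is reversed by $f$. Because $f$ preserves each $I_k$ setwise and the intervals are consecutive blocks, no cross-interval pair is reversed; inside $I_k$ the cyclic shift reverses exactly the pairs $(s,v_k)$ with $s\in I_k\setminus\{v_k\}$ (there $f(s)=s+1>v_{k-1}+1=f(v_k)$ while $s<v_k$). These pairs number $\sum_k(|I_k|-1)=|\bar R_{\hat T}|$. By the lemma each such $s$ precedes $v_k$ in $\sigma$, so $(s,v_k)$ is a non-inversion of $\sigma$ that becomes an inversion of $\tau$, contributing $+1$; no pair contributes $-1$. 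This gives $\inv(\tau)=\inv(\sigma)+|\bar R_{\hat T}|$, which is \eqref{bij:relc}.

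For the idescent identity I would prove $\IDes(\tau)=\{i+1:i\in\IDes(\sigma)\}$ directly from $\tau=f\circ\sigma$, which delivers \eqref{bij:rel} (equal cardinalities) and \eqref{bij:relb} (since $\imaj(\tau)=\sum_{i\in\IDes(\sigma)}(i+1)=\imaj(\sigma)+\ides(\sigma)$). Using that $w\in\IDes(\tau)$ iff $f^{-1}(w+1)$ precedes $f^{-1}(w)$ in $\sigma$, the claim reduces to the equivalence ``$f^{-1}(w+1)$ precedes $f^{-1}(w)$ in $\sigma$'' $\Longleftrightarrow$ ``$w$ precedes $w-1$ in $\sigma$'' for every $w$. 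When $w$ and $w+1$ lie in the interior of a common interval, $f^{-1}$ lowers both by one and the two sides coincide verbatim. I expect the main obstacle to be the boundary transitions, where $w$ or $w+1$ is a spine label $v_k$ and $f^{-1}$ wraps a value from the bottom of an interval to its top: there the two sides compare the position of a $\bar R$-element against that of a spine element rather than against a consecutive value, so the naive matching fails. I would settle each such case with the lemma, which forces the relevant $\bar R$-element to precede the spine elements in question and hence makes both sides of the biconditional simultaneously false (the extreme cases at $w=1$ and near $w=n$, where one also uses $\sigma_n=n$, go the same way). Granting this, $\IDes(\tau)$ is precisely $\IDes(\sigma)$ shifted up by one, which completes the proof.
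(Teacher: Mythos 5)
Your proposal is correct in substance but takes a genuinely different route from the paper. The paper stays entirely at the tree level: it partitions tree inversions and idescents (Definition~\ref{defi-inv-ides-Andre}) into sets $A,B,C,D$ according to whether the endpoints lie in $R$ or $\bar{R}$, shows that $A$, $B$ and $C$ shift up by one under the relabelling, and proves $|D(\widetilde{T})|=|D(\hat{T})|+|\bar{R}_{\hat{T}}|$ by locating, for each $i\in\bar{R}_{\hat{T}}$, exactly one new inversion $(i+1,j)$ with the spine vertex $j$ at whose left subtree $i+1$ attaches. You instead work on words: you identify $\tau=f\circ\sigma$ with $f$ an explicit product of forward cyclic shifts on the consecutive blocks $I_k$, and your in-order factorization lemma (any $t\in\bar{R}_{\hat{T}}$ with $v_{k-1}<t<v_k$ precedes $v_{k-1}$ in $\sigma$) replaces the paper's subtree-minima arguments. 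Your reversed-pair count for the inversion identity is complete and arguably more transparent than the paper's $D$-set computation, making the extra $\sum_k(|I_k|-1)$ inversions $(s,v_k)$ visible at a glance; your $\IDes$ analysis is a plan rather than a finished proof, but the mechanism is right and the cases do close.

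Two concrete corrections are needed. First, in the boundary analysis for $\IDes$ your predicted outcome is wrong in one family of cases: when $w=v_{k-1}+1=\min I_k$ with $|I_k|\ge 2$, one has $f^{-1}(w)=v_k$ and $f^{-1}(w+1)=v_{k-1}+1$, and your lemma makes both sides of the biconditional simultaneously \emph{true} (the $\bar{R}$-element is the first term of both comparisons), not false; ``both false'' is correct only for the transitions $w=v_k$ and $w=1$. For instance, with $\sigma=21473658$ and $w=2$ one has $1\in\IDes(\sigma)$ and $2\in\IDes(\tau)$. The proof survives since the two sides still agree, but the assertion must be fixed. Second, what your argument actually proves --- exactly like the paper's own proof, which likewise stops at $\inv(\widetilde{T})=\inv(\hat{T})+|\bar{R}_{\hat{T}}|$ --- is $\inv(\tau)=\inv(\sigma)+n-\RLmin(\sigma)$, since $|\bar{R}_{\hat{T}}|=n-|R_{\hat{T}}|$ and $|R_{\hat{T}}|=\RLmin(\sigma)$ by Proposition~\ref{th:equi-shape-des2}. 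This differs by one from \eqref{bij:relc} as printed; the running example confirms it, as $\inv(21473658)=6$, $\inv(31582746)=10$, and $n-\RLmin(\sigma)=8-4=4$. So the ``$n-1-\RLmin(\sigma)$'' in the statement is an off-by-one in the paper itself, and you should not claim your identity ``is \eqref{bij:relc}'' without flagging this discrepancy.
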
 

\begin{proof} Let $T$ be an increasing binary tree.  Recall that $R_{{T}}$ is the set of vertices of ${T}$ that don't belong to any left subtrees of ${T}$. Let  $\bar{R}_T$ be the set of vertices that don't belong to $R_T$.  By  definition, we see that for any  $i\in R_{{T}}$,  there doesn't exist $j$ such that $(i,j)$ is an inversion of ${{T}}$.   Let   
\begin{align*}
A({T}): =&\{ i-1 \colon  (i,i-1) \text{ is an inversion, where }   i\in \bar{R}_{{T}}, i-1\in \bar{R}_{{T}} \},\\
B({T}): =&\{ i-1 \colon (i,i-1) \text{ is an inversion, where } i\in \bar{R}_{{T}}, i-1\in R_{{T}} \},\\
C({T}): =& \{ (i,j) \colon (i,j) \text{ is an inversion of } T, \text{ where }i\in \bar{R}_{{T}}, j\in \bar{R}_{{T}} \}, \\
D({T}): =& \{ (i,j) \colon (i,j) \text{ is an inversion of } T, \text{ where }i\in \bar{R}_{{T}}, j\in {R}_{{T}} \}.
\end{align*}

Suppose that $\hat{T}$ is  the simsun  tree corresponding to $\sigma$ and    $\widetilde{T}$ is  the Andr\'e II tree  corresponding    to $\tau$, that is   $\hat{T}=\Psi^{-1}(\sigma)$ and   $\widetilde{T}=\Psi^{-1}(\tau)$.  We then have  
\begin{align*}\label{eq:ides_0}
&\IDes(\hat{T}) =A(\hat{T}) \cup B(\hat{T}) \quad \text{and} \quad  \inv(\hat{T}) =|C(\hat{T})|+|D(\hat{T})|  
\end{align*} 
and 
\[
\IDes(\widetilde{T})=A(\widetilde{T}) \cup B(\widetilde{T})\quad  \text{ and }      \quad\inv(\widetilde{T}) =|C(\widetilde{T})|+|D(\widetilde{T})|.
 \]
 
From relations \eqref{rel-Ta}, \eqref{rel-Tb}, \eqref{rel-Tc} and \eqref{rel-Td} in the construction of $\Phi$, it is straightforward to derive that  for $i\in [n]$, $
i \in A(\hat{T}) \Leftrightarrow    i+1 \in A(\widetilde{T})$ and $\quad i \in B(\hat{T}) \Leftrightarrow    i+1 \in B(\widetilde{T}) $. It follows that for $i\in [n]$, 
\begin{align*}
i \in \IDes(\hat{T}) \Leftrightarrow    i+1 \in \IDes(\widetilde{T}). 
\end{align*} 
Hence, by Proposition \ref{th:equi-shape-des2}, we obtain  \eqref{bij:rel}
and \eqref{bij:relb}.

Invoking relations \eqref{rel-Ta}, \eqref{rel-Tb}, \eqref{rel-Tc} and \eqref{rel-Td} again, we derive that  for $i\in [n]$ and $j \in [n]$, 
\begin{align}\label{eq:inv_3}
(i,j) \in C(\hat{T}) \Leftrightarrow   (i+1,j+1) \in C(\widetilde{T}).
\end{align}
On the other hand, for any $i\in \bar{R}_{\hat{T}}$, by the construction of the bijection $\Phi$,  we see that $i+1\in \bar{R}_{\widetilde{T}}$. For a given $i\in \bar{R}_{\hat{T}}$, 
 an inversion  $(i,j)$ in $\hat{T}$  with $j\in {R}_{\hat{T}} $ if and only if $(i+1,j+1)$ is an inversion in ${\widetilde{T}}$ with $j+1\in {R}_{{\widetilde{T}}}$, and $i+1$ is not in the left subtree of $j+1$. Additionally, for each $i\in \bar{R}_{\hat{T}}$, there exists a $j\in {R}_{\widetilde{T}}$ such that  $i+1$ lies in the left subtree of $j$ and $(i+1,j)$   is an inversion in $\widetilde{T}$.  Thus, for a given $i\in \bar{R}_{\hat{T}}$, 
\begin{align*}
&\mid\{j \in {R}_{\widetilde{T}} :  (i+1,j) \text{ is an inversion in } \widetilde{T} \}\mid \nonumber
\\&=
\mid\{j \in {R}_{\hat{T}} :  (i,j) \text{ is an inversion in } \hat{T} \}\mid + 1.
\end{align*}
It follows that 
\[|D(\widetilde{T})|=|D(\hat{T})|+|  \bar{R}_{T} |,\]
Combining this with \eqref{eq:inv_3}, we obtain 
\[\inv(\widetilde{T}) =\inv(\hat{T}) +|\bar{R}_{T}|, 
\]
and by Proposition \ref{th:equi-shape-des2}, we derive that \eqref{bij:relc}. This completes the proof. 
\end{proof}

 \vskip 0.2cm
\noindent{\bf Acknowledgment.}   We would like to express our gratitude to Zhicong Lin for bringing his paper with Dongsu Kim   to our attention. This work
was supported by the National Natural Science Foundation of China.

\bibliographystyle{plain} 

\end{document}